\theoremstyle{plain}
\theoremstyle{definition}
\newtheorem{definition}{Definition}[section]
\theoremstyle{lemma}
\newtheorem{lemma}[definition]{Lemma}
\newtheorem{theorem}[definition]{Theorem}
\theoremstyle{remark}
\newtheorem{remark}[definition]{Remark}
\numberwithin{equation}{section}
\def\E{\mathbb E}
\def\C{\mathcal C}
\def\t{\partial_t}
\def\T{\mathbb T^2}
\def\d{\mathrm{d}}
\def\Z{\mathbb Z^2}
\begin{document}
\date{}
\pagestyle{plain}
\title{Convergence rate for Galerkin approximation of the stochastic Allen-Cahn equations on 2D torus }
\author{Ting~Ma$^a$,~Rongchan~Zhu$^{b,c,}$
\thanks{Research supported in part by NSFC (No.11671035). Financial support by the DFG through the CRC
1283 ''Taming uncertainty and profiting from randomness and low regularity in analysis, stochastics and their
applications'' is acknowledged.}%
\thanks{Corresponding author}%
\thanks{Email address: matingting2008@yeah.net(T. Ma), zhurongchan@126.com(R. C. Zhu)}}
\affil[a]{College of Mathematics, Sichuan University, Chengdu 610065, China}
\affil[b]{Department of Mathematics, Beijing Institute of Technology, Beijing 100081, China}
\affil[c]{Department of Mathematics, University of Bielefeld, D-33615 Bielefeld, Germany}
\maketitle \underline{}
\begin{abstract}
In this paper we discuss the  convergence rate for Galerkin approximation of the stochastic Allen-Cahn equations driven by space-time white noise on $\T$. First we prove that the  convergence  rate for stochastic 2D heat equation is  of order $\alpha-\delta$ in Besov space $\C^{-\alpha}$ for $\alpha\in(0,1)$ and $\delta>0$ arbitrarily small. Then we obtain the convergence rate for Galerkin approximation of the stochastic Allen-Cahn equations of order $\alpha-\delta$ in $\C^{-\alpha}$ for $\alpha\in(0,2/9)$ and $\delta>0$ arbitrarily small.
\vskip 0.2 in \noindent{{\bf {Keywords}}~Stochastic Allen-Cahn equations, convergence rate, Galerkin projection, Besov space, white noise.}
\vskip 0.2 in \noindent {{\bf Mathematics Subject Classification} 60H15, 82C28}
\end{abstract}
\setlength{\baselineskip}{0.25in}

\section{Introduction}

~~~~In this paper we study the  convergence rate for Galerkin approximation of the stochastic Allen-Cahn equations (see \eqref{initial-Eq}) on 2D torus driven by space-time white noise.
Such stochastic partial differential equations (SPDEs)   contain superlinearly growing nonlinearities in their coefficients  and in general they can not be solved explicitly. It is a quite active area  to design and analyze approximation algorithms, which solve
SPDEs with superlinearly growing nonlinearities approximatively.
Galerkin  approximation has been  applied to study solutions to SPDEs   (see \cite{LR13}). It could also be regarded as one of the basic finite elements methods in spatial approximation (see e.g. \cite{BGJK2017,BHJKL19,GM05,GSS16,HJ14,HJS17,JP15,Y05}).
 There are also a lot of work on other  spatial approximation methods including Fourier method,  piecewise linear approximation,  finite elements methods (see e.g. \cite{BH19,GM05,HM16,ZZ2015b}), and   finite differences approximation (see e.g. \cite{BHJKL19,GM05,DG01,G98,G99,FKLL16}).

Approximations to SPDEs driven by trace-class Wiener process have been studied a lot in the literatures (see e.g. \cite{BH19,FKLL16,GSS16,JP15,LR13,CH18,HJ14,KLL15,Y05}).
For SPDEs driven by space-time white noise, we refer to
\cite{HJS17,AS2017,BGJK2017,DG01,G98,G99,HM16} and the references therein for the convergence of spatial approximations and
 refer to \cite{BJ16,HJS17,AS2017,BGJK2017,DG01,G98,G99} and the references therein for the convergence of temporal approximations.
Furthermore, in  the references \cite{BJ16,BGJK2017,DG01,G99,HM16} the convergence rates of spatial and  temporal approximations  were also obtained.
To be more specific, in \cite{G99} pointwise estimates were considered for the stochastic quasi-linear parabolic PDEs with locally bounded coefficients on one dimensional space driven by  multiplicative space-time white noise  and the convergence rate of  order $1/2$ was obtained. In \cite{DG01} the rate of convergence for the  stochastic   heat equation was further improved to $1$ for  additive noise by   estimating space averages of the solution rather than pointwise estimates.
S. Becker, B. Gess, A. Jentzen and P. E. Kloeden
in \cite{BGJK2017} obtained the convergence of full-discrete  approximations with rates of order $1/2-\epsilon,\epsilon>0$ in space and $1/4-\epsilon,\epsilon>0$ in time for  stochastic Allen-Cahn equations driven by  space-time white noise  on one dimensional space.
Using the rough path theory (see \cite{L98,G04,G10}), M. Hairer and K. Matetski in \cite{HM16} showed convergence of spatial approximations with rate of order $1/2-\epsilon$  for  Burgers type SPDEs driven by space-time white noise on one dimensional space.

All the references we conclude above are  about SPDEs driven by space-time white noise on one dimensional case. For the higher dimensional cases,
Yan in \cite{Y05} obtained the convergence rates of spatial and temporal approximations for linear SPDEs driven by space-time white noise on $d~(d=1,2,3)$ dimensional space.
In  \cite{TW2018,TW20182,ZZ2015,MW2010} the authors studied the stochastic Allen-Cahn equations (the dynamical $\Phi^4_d$, $d>1$ model) and obtained convergence with no description of the convergence rates.
To the best of our knowledge, there exists no result in the literature, which establishes the convergence rates for numerical approximation of SPDEs  with superlinearly growing nonlinearities  driven by  space-time white noise  on high-dimensional space. In this paper we study the convergence rates for  stochastic 2D  Allen-Cahn equations  driven by space-time white noise.

For spatial dimension $d\geq2$,  stochastic Allen-Cahn equations \eqref{initial-Eq} driven by space-time white noise are  ill-posed in the classical sense and the main difficulty in this case is that the noise $\xi$ and hence the solution $X$ are so singular that the non-linear terms are not well-defined in the classical sense. In two spatial dimensions, this problem
was previously treated in \cite{AR91,DPD03}.
These kinds of singular SPDEs have received a lot of attention recently (see e.g. \cite{ Hai14,GIP13}) and the renormalization is required.
In this paper we  define the approximating equations of the stochastic Allen-Cahn equations via Galerkin projection.  As explained above,  we need to consider the convergence of Galerkin approximation using renormalization (see \eqref{app-Eq}) and we obtain the convergence rate for this approximation.
\subsection{Statement and main results} \label{Statement of the main results}
Consider the equation
\begin{equation}
\label{initial-Eq}
\left\{
 \begin{aligned}
\t X&= \Delta X+:F(X):+\xi,\text{~~on~} (0,\infty)\times\T,\\
X(0)&=X_0 \text{~~on~} \T.
 \end{aligned}
 \right.
\end{equation}
Here $\mathbb T^2$ is a torus of size $1$ in $\mathbb R^2$, $\xi$ is  space-time white noise on $\mathbb R^+\times\T$ (see Definition \ref{WHITE-NOISE}), $\Delta$ is the Laplacian with periodic boundary conditions on $L^2(\T)$. $:F(v)::=\sum_{j=0}^3a_jv^{:j:}$, $a_3<0$, where $v^{:0:}=1,v^{:1:}=v$ and $v^{:2:},v^{:3:}$, the Wick powers of $v$, are defined by approximation in Section \ref{set-Wick powers}. The initial value $X_0\in\C^{-\alpha}$, $\alpha\in(0,1)$, which is defined in Section \ref{subsec-BS}.
Following \cite{DPD03,MW2010}, we say that $X$ solves the equation (\ref{initial-Eq}) if
\begin{equation}\label{sum}
 X=Y+\bar Z,
\end{equation}
where $\bar Z$ satisfies  the following   stochastic heat equation
\begin{equation}
\label{ini1-Eq}
\left\{
 \begin{aligned}
\t \bar Z&= A \bar Z+\xi, \text{~~on~} (0,\infty)\times\T,\\
\bar Z(0)&=X_0 \text{~~on~} \T,
 \end{aligned}
 \right.
\end{equation}
with $A=\Delta-{\rm I}$ and $Y$ solves
\begin{equation}
\label{ini2-Eq}
\left\{
 \begin{aligned}
\t Y&=\Delta Y+\Psi(Y,\underline{ \bar Z}), \text{~~on~} (0,\infty)\times\T,\\
Y(0)&=0,\text{~~on~} \T,
 \end{aligned}
 \right.
\end{equation}
with $\underline{z}:=(z,z^{:2:},z^{:3:})$ and
\begin{equation}\label{wick-YZ}
\Psi(y,\underline{z}):= \sum^3_{j=0} a_j \sum ^j_{k=0} \binom{j}{k} y^k {z}^{:j-k:}+{z}.
 \end{equation}
We interpret (\ref{ini2-Eq}) in the mild sense, i.e.  $Y$ solves (\ref{ini2-Eq}) if for every $t\geq 0$,
\begin{equation}\label{mild-2}
 Y_t=\int^t_0 e^{{(t-s)}\Delta}\Psi(Y_s,\underline{\bar Z}_s)ds.
\end{equation}
Next, we consider the Galerkin approximations of \eqref{ini1-Eq}: for $N\geq 1$,
\begin{equation}
\label{ini1-Eq_N}
\left\{
 \begin{aligned}
\t \bar Z^N&= P_N A \bar Z^N+\xi^N,  \text{~~on~} (0,\infty)\times\T,\\
\bar Z^N(0)&=P_NX_0, \text{~~on~} \T,
 \end{aligned}
 \right.
\end{equation}
with $P_N$, the projection operators on $L^2(\T)$,  given in \eqref{pro-ope}, $\xi^N:=P_N\xi$. By \cite[(2.3)]{TW2018} and \cite[p.4]{DPD03} there exist constants $\mathfrak{R}^{N}$ given in \eqref{renor-cosnt}, which diverge logarithmically as $N$ goes to $\infty$, such that
\begin{equation}\label{wick-ZN-2-3}
  (\bar Z^N)^{:2:}:=(\bar  Z^{N})^2-\mathfrak{R}^{N},~~~~(\bar Z^N)^{:3:}:=( \bar Z^{N})^3-3\mathfrak{R}^{N}\bar Z^{N}
\end{equation}
converge in $L^p(\Omega;C([0,T];\C^{-\alpha}))$ with $\alpha\in(0,1)$, $p>1$ to non-trivial limits denoted by $\bar Z^{:2:}$ and $\bar Z^{:3:}$, respectively. We refer to Section \ref{SEC3} for  details.
Then we consider the following  Galerkin approximation of \eqref{initial-Eq}
\begin{equation}
\label{app-Eq}
\left\{
 \begin{aligned}
&\t X^N= P_N \Delta X^N+ P_N F^N(X^N)+\xi^N, \text{~~on~} (0,\infty)\times\T,\\
&X^N(0,\cdot)= P_N  X_0,\text{~~on~} \T,
 \end{aligned}
 \right.
\end{equation}
with
\begin{equation*}\label{app-FN}
F^{N}(v)=a_3v^{3}+a_2v^{2}+a_1^{N}v+a_0^{N},
\end{equation*}
\[
a_1^{N}:=a_1-3a_3\mathfrak{R}^{N},~a_0^{N}:=a_0-a_2\mathfrak{R}^{N}.
\]
Then $X^N$ solves \eqref{app-Eq} if
\begin{equation}\label{sumN}
X^N=\bar Z^N+Y^N
 \end{equation}
 with $\bar Z^N$ solving \eqref{ini1-Eq_N} and $Y^N$ solving
\begin{equation}
\label{ini2-Eq_N}
\left\{
 \begin{aligned}
\t Y^N&= P_N\Delta Y^N+P_N\Psi(Y^N,\underline{\bar Z}^N),  \text{~~on~} (0,\infty)\times\T,\\
Y^N(0)&=0,~\text{~~on~} \T,
 \end{aligned}
 \right.
\end{equation}
 in the mild sense that for every $t\geq 0$,
\begin{equation}\label{mild-2-N}
 Y_t^N=\int^t_0P_N e^{{(t-s)}\Delta}\Psi(Y^N_s,\underline{\bar Z}^N_s)ds,
\end{equation}
with $\underline{\bar Z}^N=({\bar Z}^N, ({\bar Z}^N)^{:2:},({\bar Z}^N)^{:3:})$ given in \eqref{wick-ZN-2-3}.


We  mainly discuss the convergence rates for the stochatic Allen-Cahn equations \eqref{initial-Eq}. The main results obtained is as below. See Theorem \ref{main-rate} for more details.
\begin{theorem}
 Let $\alpha\in(0,2/9),\gamma'>3\alpha/2$ and  $X_0\in\C^{-\alpha}$. Let $X$,$X^N$ denote the solutions to equations \eqref{initial-Eq} and \eqref{app-Eq} on $[0,T]$ with initial values $X_0$ and $P_NX_0$, respectively.  Then for any $\delta>0$
\[
 \lim_{N\rightarrow\infty}\mathbb P \Big(\sup_{t\in[0,T]}t^{\gamma'} \|X_t-X_t^N\|_{{-\alpha}}\gtrsim N^{\delta-{\alpha}}\Big)=0.
\]
\end{theorem}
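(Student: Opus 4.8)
The plan is to control $X-X^{N}$ through the splitting $X-X^{N}=(\bar Z-\bar Z^{N})+(Y-Y^{N})$, treating the linear and Wick part by explicit spectral and Wiener--chaos estimates and the nonlinear part by a perturbative singular Gronwall argument in time-weighted Besov norms. First I would settle the rate for the linear part and the renormalized powers. Since $P_{N}$ is a Fourier multiplier commuting with $A$ and with the noise, $\bar Z^{N}=P_{N}\bar Z$, so $\bar Z_{t}-\bar Z^{N}_{t}=(I-P_{N})\bar Z_{t}$; writing $\bar Z_{t}=e^{tA}X_{0}+\tilde Z_{t}$ with $\tilde Z$ the stationary stochastic convolution (which lies in $C([0,T];\C^{-\delta})$), the $\tilde Z_{t}$ part obeys $\|(I-P_{N})\tilde Z_{t}\|_{-\alpha}\lesssim N^{\delta-\alpha}\|\tilde Z_{t}\|_{-\delta}$, while for $(I-P_{N})e^{tA}X_{0}$ one trades $\alpha-\delta$ derivatives of $I-P_{N}$ against the factor $t^{-(\alpha-\delta)/2}$ that $e^{tA}$ gains on $\C^{-\alpha}$. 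For the Wick powers one uses the Wick structure of $(\bar Z^{N})^{:k:}$ and $\bar Z^{:k:}$ to reduce the difference to expressions carrying a factor $(I-P_{N})\bar Z$, and invokes Nelson's hypercontractivity in the fixed inhomogeneous Wiener chaos to pass from $L^{p}(\Omega)$ moments to the $\C^{-\alpha}$ (Littlewood--Paley) norm; the covariance bounds produce the rate $N^{\delta-\alpha}$, and the $e^{tA}X_{0}$ contributions produce the time singularities at $t=0$ --- of order $t^{-3\alpha/2}$ for the cube --- which, together with the corresponding singularities in the nonlinear step, are what make the weight $\gamma'$ necessary. The upshot is that $\bar Z-\bar Z^{N}$, $\bar Z^{:2:}-(\bar Z^{N})^{:2:}$ and $\bar Z^{:3:}-(\bar Z^{N})^{:3:}$ all tend to $0$ at rate $N^{\delta-\alpha}$ in $L^{p}(\Omega)$, in the appropriate time-weighted $\C^{-\alpha}$ norms.

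Second, for the nonlinear part: since $Y_{t}\in\C^{\gamma}$ for $t>0$ and $\gamma<2-\alpha$, the piece $(I-P_{N})Y$ is of smaller order than $N^{\delta-\alpha}$ in $\C^{-\alpha}$, so it is enough to control $P_{N}Y-Y^{N}$. Applying $P_{N}$ to \eqref{ini2-Eq} and subtracting \eqref{ini2-Eq_N}, its mild form is $P_{N}Y_{t}-Y^{N}_{t}=\int_{0}^{t}P_{N}e^{(t-s)\Delta}(\mathrm{I}_{s}+\mathrm{II}_{s})\,ds$ with $\mathrm{I}_{s}=\Psi(P_{N}Y_{s},\underline{\bar Z}^{N}_{s})-\Psi(Y^{N}_{s},\underline{\bar Z}^{N}_{s})$ and $\mathrm{II}_{s}=\Psi(Y_{s},\underline{\bar Z}_{s})-\Psi(P_{N}Y_{s},\underline{\bar Z}^{N}_{s})$; by \eqref{wick-YZ}, $\mathrm{I}_{s}$ is a polynomial in $P_{N}Y_{s}-Y^{N}_{s}$ with coefficients built from $Y^{N}_{s}$ and the components of $\underline{\bar Z}^{N}_{s}$ (the Gronwall term), while $\mathrm{II}_{s}$ is a finite sum of terms each carrying a factor $(I-P_{N})Y_{s}$ or a component of $\underline{\bar Z}_{s}-\underline{\bar Z}^{N}_{s}$ (the source, of size $N^{\delta-\alpha}$ by the first step). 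Each contribution I would bound using the Schauder estimate $\|e^{t\Delta}f\|_{\C^{\beta}}\lesssim t^{-(\beta-\sigma)/2}\|f\|_{\C^{\sigma}}$, the smoothing gain $\|(I-P_{N})f\|_{\C^{\sigma}}\lesssim N^{-(\tau-\sigma)}\|f\|_{\C^{\tau}}$, Besov multiplication $\|fg\|_{\C^{\sigma_{1}\wedge\sigma_{2}}}\lesssim\|f\|_{\C^{\sigma_{1}}}\|g\|_{\C^{\sigma_{2}}}$ for $\sigma_{1}+\sigma_{2}>0$ (with paraproduct decompositions where one factor is only in a negative space), and uniform-in-$N$ a priori bounds on $Y$ and $Y^{N}$ in time-weighted $\C^{\gamma}$ norms. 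Working in the norm $\|f\|:=\sup_{t\le T}\bigl(t^{\gamma'}\|f_{t}\|_{-\alpha}+t^{\mu}\|f_{t}\|_{\C^{\gamma}}\bigr)$, with $\gamma$ slightly above $\alpha$ and $\mu,\gamma'$ picked so that the Gronwall term carries a time exponent strictly below $1$, a singular Gronwall/Volterra iteration closes the estimate and yields $\|P_{N}Y-Y^{N}\|\lesssim N^{\delta-\alpha}$ on the event where the first-step data are under control.

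Third, to pass to the statement I would localize: introduce stopping times $\tau_{N}=\tau_{N}(L)$ defined through the time-weighted $\C^{-\alpha}$ norms of $\bar Z^{N},(\bar Z^{N})^{:2:},(\bar Z^{N})^{:3:}$ and of $Y^{N}$, so that on $\{\tau_{N}\ge T\}$ the bound of the second step is deterministic in $L$. Splitting the probability in the statement over $\{\tau_{N}\ge T\}$ and $\{\tau_{N}<T\}$, Markov's inequality and the $L^{p}(\Omega)$-convergence of the first step make the contribution of the former vanish as $N\to\infty$, while the uniform-in-$N$ tightness of the noise data together with the a priori bound on $Y^{N}$ (which rests on the dissipativity $a_{3}<0$) make the contribution of the latter vanish as $L\to\infty$.

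The main obstacle is exactly this last a priori control of $Y^{N}$ in a norm strong enough to feed the product estimates: because $P_{N}$ does not commute with $v\mapsto v^{3}$, testing \eqref{ini2-Eq_N} with $|Y^{N}|^{p-2}Y^{N}$ does not immediately display the damping $a_{3}\,\|Y^{N}\|_{L^{4}}^{4}\le 0$, so one must exploit the self-adjointness of $P_{N}$ and Young's inequality to absorb the cross terms $Y^{N}(\bar Z^{N})^{:k:}$ into it before bootstrapping through the mild equation to a time-weighted $\C^{\gamma}$ bound. Carrying this out while simultaneously (i) preserving the rate $N^{\delta-\alpha}$ through every product and Schauder step, (ii) keeping the sum of Besov regularities positive in each multiplication, and (iii) keeping the time singularities $s^{-c\alpha}$ that $X_{0}\in\C^{-\alpha}$ injects through the cube in $\Psi$ integrable near $s=0$, is precisely the delicate balance that confines $\alpha$ to $(0,2/9)$ and forces $\gamma'>3\alpha/2$.
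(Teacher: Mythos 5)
Your overall architecture coincides with the paper's: split $X-X^N=(\bar Z-\bar Z^N)+(Y-Y^N)$, get the rate $N^{\delta-\alpha}$ for $\bar Z^{:n:}-(\bar Z^N)^{:n:}$ from covariance computations in a fixed Wiener chaos plus hypercontractivity (Lemma \ref{Zt-ZtN} and Theorem \ref{Z-INI}), run a time-weighted singular Gronwall argument on the mild equation for $P_NY-Y^N$ with the decomposition $\Psi=F+\widetilde\Psi$, localize by stopping times, and finish with Markov's inequality together with $\|X_t-P_NX_t\|_{-\alpha}\lesssim N^{-\lambda}(\log N)^2\|X_t\|_{-\alpha+\lambda}$. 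Two remarks on precision: the pathwise factorization of $(\bar Z)^{:k:}-(\bar Z^N)^{:k:}$ through a factor $(I-P_N)\bar Z$ is not literally available (the product of two negative-regularity factors is undefined), so the argument must stay at the level of second moments of Littlewood--Paley blocks as in \eqref{notd2}; this you essentially acknowledge by invoking the covariance bounds, so it is a matter of wording rather than substance.

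The genuine gap is in how you obtain uniform-in-$N$ control of $Y^N$ in a $\C^\beta$-type norm, which you correctly identify as the main obstacle but do not resolve. Your proposed route --- testing \eqref{ini2-Eq_N} against $|Y^N|^{p-2}Y^N$ and extracting the damping from $a_3<0$ --- fails for $p>2$ exactly as you fear: since $|Y^N|^{p-2}Y^N$ does not lie in the range of $P_N$, the pairing $\langle P_N((Y^N)^3),|Y^N|^{p-2}Y^N\rangle$ does not reduce to $\|Y^N\|^{p+2}_{L^{p+2}}$, and self-adjointness of $P_N$ plus Young's inequality does not repair this; only the $p=2$ case closes. The paper sidesteps the issue entirely: it introduces the additional stopping time $\sigma_N=\inf\{t:\,t^{\gamma'}\|Y^N_t-Y_t\|_\beta>1\}$, runs the Gronwall estimate on $[0,\sigma_N\wedge\tau^M\wedge\nu_N^{M,\epsilon}]$ (where $\|Y^N\|$ is automatically bounded by $M+1$ via $Y$), and concludes from \eqref{Y-ES} that the difference stays strictly below $1$ for $N$ large, so that $\sigma_N\geq\tau^M\wedge\nu_N^{M,\epsilon}$ and no a priori bound on $Y^N$ uniform in $N$ is ever needed. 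Without this bootstrap (or a genuinely new dissipativity argument surviving the Galerkin truncation), your third step cannot be closed: the event $\{\tau_N<T\}$ in your localization is controlled by a tightness statement for $Y^N$ that you have not established. You should replace your stopping time in $Y^N$ by one in the difference $Y-Y^N$ and argue by continuity as above.
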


\subsection{Structure}
This paper is organized as follows: In Section \ref{subsec-BS} we collect results related to Besov spaces.
In Section \ref{SEC3} we  show the convergence rates for  linear equation \eqref{ini1-Eq}.
In Section \ref{Pathwise-error} we prove the main results, i.e., the convergence rates for stochastic Allen-Cahn equations \eqref{initial-Eq}.

\begin{section}{Besov spaces and preliminaries}\label{subsec-BS}
We first recall Besov spaces from \cite{ZZ2015,MW2010}.  For general theory we refer to  \cite{BCD2011,Tri78,Tri06}. Throughout the paper, we use the notation $a \lesssim b$ if there exists a constant $c > 0$ independent of
the relevant quantities such that $a \leq cb$, we also use the notation $a\gtrsim b$ if $b\lesssim a$, and  use the notation $a\backsimeq b$ if $a \lesssim b$ and $b\lesssim a$. For $p\in[1,\infty]$, let $L^p(\mathbb T^d)$ denote the usual $p$ integrable space on $\mathbb T^d$ with its norm denoted by $\|\cdot\|_{L^p}$.  The space of Schwartz functions on $\mathbb T^d$ is denoted by $\mathcal S (\mathbb T^d)$ and its dual, the space of tempered distributions is denoted by $\mathcal S'(\mathbb T^d)$. The space of real valued infinitely differentiable functions is denoted by $C^{\infty}(\mathbb T^d)$. For any function $f$ on $\mathbb T^d$, let ${\rm{supp}} (f)$ denote its support.

Consider the orthonormal basis $\{e_m\}_{m\in\mathbb Z^d}$ of trigonometric functions on $\mathbb T^d$
\begin{equation}\label{basis}
  e_m(x):=e^{\iota2\pi m \cdot x},  x\in\mathbb T^d,
\end{equation}
we write $L^2(\mathbb T^d)$ with its inner product $ \langle \cdot,\cdot\rangle$ given by
 \[
    \langle f,g\rangle=\int_{\mathbb T^d}f(x)\bar g(x)\d x,~~f,g\in L^2(\mathbb T^d).
 \]
Then for any $f\in L^2(\mathbb T^d)$, we denote by $\mathcal F f$ or $ \hat{f}$ its Fourier transform

\[
   \hat{f}(m)=\langle f, e_m\rangle=\int_{\mathbb T^d} e^{-\iota2\pi m \cdot x}f(x)\d x,~~m\in\mathbb Z^d.
\]
For  $N\geq 1$, we define $P_N$ the projection operators from $L^2(\mathbb T^d)$ onto the space spanned by $ \{e_m,~|m|\leq N\}$ with $|m|:=\sqrt{m_1^2+m_2^2+\cdots+m^2_d},~m=(m_1,\ldots,m_d)\in\mathbb Z^d$, i.e.
\begin{equation}\label{pro-ope}
P_N f=\sum_{m:|m|\leq N}\langle f, e_m\rangle ~e_m,~f\in L^2(\mathbb T^d).
\end{equation}
For $\zeta\in\mathbb R^d$ and $r > 0$ we denote by $B(\zeta, r)$ the ball of radius $r$ centered at $\zeta$ and let the annulus $\mathcal A:=B(0,\frac 8 3)\setminus B(0,\frac 3 4)$. According to \cite[Proposition 2.10]{BCD2011}, there exist nonnegative radial functions $\chi, \theta\in\mathcal D(\mathbb R^d)$, the space of real valued infinitely differentiable functions of compact support on $\mathbb R^d$, satisfying

i. ${\rm {supp}}(\chi)\subset B(0,\frac 4 3),~{\rm {supp}}(\theta)\subset\mathcal A$;

ii. $\chi(z)+\sum_{j\geq 0}\theta(z/{2^j})=1$ for all $z\in\mathbb R^d$;

iii. ${\rm {supp}}(\chi)\cap {\rm {supp}}(\theta(\cdot/{2^j}))=\emptyset$ for $j\geq1$ and ${\rm {supp}}(\theta(\cdot/{2^i}))\cap {\rm {supp}}(\theta(\cdot/{2^j}))=\emptyset$ for $|i-j|>1$.

$(\chi,\theta)$ is called a dyadic partition of unity. The above decomposition can be applied to distributions on the torus (see \cite{S85,SW71}).  Let
\[
   \chi_{-1}:=\chi,~~\chi_{j}:=\theta(\cdot/{2^j}),~j\geq0,
\]
we have ${\rm {supp}}(\chi_{j})\subset \mathcal A_{2^j}:=2^j \mathcal A$ for every $j\geq0$ and $\mathcal A_{2^{-1}}\subset B(0,\frac 4 3)$. For  $f\in C^\infty(\mathbb T^d)$, the $j$-Littlewood-Paley block is  defined as
\begin{equation}\label{LPBs}
\Delta_{j}f(x)=\sum_{m\in\mathbb Z^d}\chi_{j}(m)\hat{f}(m) e^{\iota2\pi m\cdot x},~j\geq-1.
\end{equation}
 It is noticeable that \eqref{LPBs} is equivalent to the equality
\begin{equation}\label{LPBs2}
  \Delta_{j}f=\eta_j \ast f,~~j\geq-1,
\end{equation}
where
\[
\eta_j\ast f(\cdot)=\int_{\mathbb T^d} \eta_j (\cdot-x)f(x)dx,~~
\eta_j(x):=\sum_{m\in\mathbb Z^d}\chi_{j}(m) e^{\iota2\pi m \cdot x}.
\]
For $\alpha\in\mathbb R,~p,q\in[1,\infty]$, we define the Besov space on $\mathbb T^d$ denoted by $\mathcal B_{p,q}^{\alpha}(\mathbb T^d)$  as
 the completion of $C^\infty(\mathbb T^d)$ with respect to the norm (\cite[Proposition 2.7]{BCD2011})
\begin{equation}\label{B-SPACE}
  \|u\|_{\mathcal B_{p,q}^{\alpha}(\mathbb T^d)}:=(\sum_{j\geq-1} 2^{j\alpha q}\|\Delta_j u\|^q_{L^p})^{1/q},
\end{equation}
with the usual interpretation as $l^{\infty}$ norm in case $q=\infty$.
Note that for $p,q\in[1,\infty)$
\begin{equation*}
\begin{aligned}
   \mathcal B_{p,q}^{\alpha}(\mathbb T^d)&=\{u\in\mathcal S'(\mathbb T^d):~\|u\|^q_{ \mathcal  B_{p,q}^{\alpha}(\mathbb T^d)}<\infty\},\\
  \mathcal B_{\infty,\infty}^{\alpha}(\mathbb T^d)&\varsubsetneq\{u\in\mathcal S'(\mathbb T^d):~\|u\|^q_{\mathcal  B_{\infty,\infty}^{\alpha}(\mathbb T^d)}<\infty\}.
\end{aligned}
\end{equation*}
 Here we choose Besov spaces as completions of smooth functions on the torus,
which ensures that the Besov spaces are separable and has a lot of advantages for our analysis
below. In the following we give estimates on the torus for later use.

We recall the following Besov embedding theorems on the torus  (cf. \cite[Theorem 4.6.1]{Tri78}, \cite[Lemma A.2]{GIP13}, \cite[Proposition 3.11,Remark 3.3]{MW2010}).
\begin{lemma} \label{V-SUB-H}{\rm(Besov embedding)}\label{Bes-Smooth}
{\rm (i)} Let $\alpha\leq \beta\in\mathbb R$,  $p,q\in[1,\infty]$. Then $ \mathcal B_{p,q}^{\beta}(\mathbb T^d)$ is continuously embedded   in $ \mathcal B_{p,q}^{\alpha}(\mathbb T^d)$.

{\rm (ii)}~ Let $1\leq p_1\leq p_2\leq \infty$, $1\leq q_1\leq q_2\leq \infty$, and let $\alpha\in \mathbb R$. Then $\mathcal B_{p_1,q_1}^{\alpha}(\mathbb T^d)$ is continuously embedded in  $\mathcal B_{p_2,q_2}^{\alpha-d(1/p_1-1/p_2)}(\mathbb T^d)$.

\end{lemma}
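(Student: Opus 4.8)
For part (i), I would argue term by term in the Littlewood--Paley series. Since $\alpha\le\beta$, for every $j\ge-1$ one has $j(\alpha-\beta)\le-(\alpha-\beta)=\beta-\alpha$, hence $2^{j\alpha}\le 2^{\beta-\alpha}\,2^{j\beta}$. Inserting this into the weights of the defining norm \eqref{B-SPACE}, multiplying by $\|\Delta_j u\|_{L^p}$ and taking the $\ell^q$ norm over $j$ immediately gives $\|u\|_{\mathcal B_{p,q}^{\alpha}(\mathbb T^d)}\le 2^{\beta-\alpha}\|u\|_{\mathcal B_{p,q}^{\beta}(\mathbb T^d)}$, which is the asserted continuous embedding. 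Note the factor $2^{\beta-\alpha}\ge 1$ is harmless and originates only from the low-frequency block $j=-1$, where the exponent ordering is reversed; for $j\ge0$ the weights of the $\alpha$-norm are already dominated by those of the $\beta$-norm.

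For part (ii) the key ingredient is a Bernstein-type inequality: if $v\in C^\infty(\mathbb T^d)$ has Fourier support in the annulus $2^j\mathcal A$ (or in the ball $B(0,\tfrac43)$ when $j=-1$), then for $1\le p_1\le p_2\le\infty$ one has $\|v\|_{L^{p_2}}\lesssim 2^{jd(1/p_1-1/p_2)}\|v\|_{L^{p_1}}$. I would prove this by reproducing the block: choose $\tilde\chi\in\mathcal D(\mathbb R^d)$ equal to $1$ on the support of $\theta$ and supported in a slightly larger annulus, set $\tilde\eta_j(x):=\sum_{m\in\mathbb Z^d}\tilde\chi(m/2^j)e^{\iota 2\pi m\cdot x}$, so that $v=\tilde\eta_j\ast v$ by the support property underlying \eqref{LPBs2}. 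Young's inequality on $\mathbb T^d$ with $1/r=1-(1/p_1-1/p_2)$ then yields $\|v\|_{L^{p_2}}\le\|\tilde\eta_j\|_{L^r}\|v\|_{L^{p_1}}$, and a scaling estimate for the periodic kernel gives $\|\tilde\eta_j\|_{L^r}\lesssim 2^{jd(1-1/r)}=2^{jd(1/p_1-1/p_2)}$; the case $j=-1$ is the same with a fixed $O(1)$ constant. Applying this with $v=\Delta_j u$ gives $\|\Delta_j u\|_{L^{p_2}}\lesssim 2^{jd(1/p_1-1/p_2)}\|\Delta_j u\|_{L^{p_1}}$ for every $j\ge-1$.

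Finally I would reassemble the Besov norm. Set $s:=\alpha-d(1/p_1-1/p_2)$. Multiplying the Bernstein bound by $2^{js}$ absorbs the frequency factor, since $2^{js}2^{jd(1/p_1-1/p_2)}=2^{j\alpha}$, whence $2^{js}\|\Delta_j u\|_{L^{p_2}}\lesssim 2^{j\alpha}\|\Delta_j u\|_{L^{p_1}}$. Taking the $\ell^{q_2}$ norm over $j$ and then using the elementary inclusion $\ell^{q_1}\hookrightarrow\ell^{q_2}$ for $q_1\le q_2$ (so that the $\ell^{q_2}$ norm is dominated by the $\ell^{q_1}$ norm) produces $\|u\|_{\mathcal B_{p_2,q_2}^{s}(\mathbb T^d)}\lesssim\|u\|_{\mathcal B_{p_1,q_1}^{\alpha}(\mathbb T^d)}$, which is precisely (ii).

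The main technical obstacle is the scaling estimate $\|\tilde\eta_j\|_{L^r}\lesssim 2^{jd(1-1/r)}$ for the periodic kernel. On $\mathbb R^d$ it follows at once from the dilation $\mathcal F^{-1}\big(\tilde\chi(\cdot/2^j)\big)=2^{jd}(\mathcal F^{-1}\tilde\chi)(2^j\cdot)$, which scales the $L^r$ norm by exactly $2^{jd(1-1/r)}$. On $\mathbb T^d$ one must instead control the periodization of this Euclidean kernel, for instance via Poisson summation, and verify that for $j\ge0$ its $L^r(\mathbb T^d)$ norm scales like the Euclidean one up to a constant independent of $j$. Since everything else reduces to the term-by-term comparison of part (i) and the $\ell^q$ monotonicity, this kernel estimate is the only step requiring genuine care; it is standard and may alternatively be invoked from \cite[Lemma 2.1]{BCD2011} together with the transference of Littlewood--Paley theory to the torus.
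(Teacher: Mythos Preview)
Your argument is correct and is precisely the standard proof of the Besov embeddings: monotonicity of the weights for (i), and Bernstein's inequality combined with the inclusion $\ell^{q_1}\hookrightarrow\ell^{q_2}$ for (ii). There is nothing to compare against, however, because the paper does not prove this lemma at all; it is stated as a recall of known results with references to \cite[Theorem 4.6.1]{Tri78}, \cite[Lemma A.2]{GIP13}, and \cite[Proposition 3.11, Remark 3.3]{MW2010}. Your write-up would serve perfectly well as a self-contained proof, and your identification of the periodic kernel scaling $\|\tilde\eta_j\|_{L^r}\lesssim 2^{jd(1-1/r)}$ as the only nontrivial step is accurate.
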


We describe the following Schauder estimates, i.e. the smoothing effect of the heat flow, as measured in Besov spaces (cf. \cite[Propositions 3.11,3.12]{MW2010}, \cite[Lemmas A.7,A.8]{GIP13}).

\begin{lemma}{\rm (Schauder estimates)}\label{Heat-Smooth1}
{\rm(i)} Let $f\in\mathcal B_{p,q}^{\alpha}(\mathbb T^d)$ for some $\alpha\in\mathbb R,~p,q\in[1,\infty]$.
Then for every $\delta>0$,  uniformly over $t>0$
\begin{equation}\label{Heat-Smooth}
\|e^{tA} f\|_{\mathcal B_{p,q}^{\alpha+\delta}(\mathbb T^d)}\lesssim t^{-\frac{\delta}{2}}\|f\|_{\mathcal B_{p,q}^{\alpha}(\mathbb T^d)}.
\end{equation}
{\rm (ii)} Let $\alpha\leq \beta\in\mathbb R$ be such that $\beta-\alpha\leq2$, $f\in \mathcal B_{p,q}^{\beta}(\mathbb T^d)$ and $p,q\in[1,\infty]$. Then uniformly over $t>0$
\begin{equation}\label{Heat-time-reg}
\|({\rm I}-e^{tA}) f\|_{\mathcal B_{p,q}^{\alpha}(\mathbb T^d)}\lesssim t^{\frac{\beta-\alpha}{2}}\|f\|_{\mathcal B_{p,q}^{\beta}(\mathbb T^d)}.
\end{equation}
\end{lemma}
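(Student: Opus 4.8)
The plan is to reduce both estimates, via the Littlewood--Paley characterisation \eqref{B-SPACE} of the Besov norm, to a single Fourier-localised smoothing bound for the heat semigroup, and then to sum over dyadic blocks. Writing $A=\Delta-\mathrm I$, so that $e^{tA}=e^{-t}e^{t\Delta}$ and $\Delta_j$ commutes with $e^{tA}$, the central input is the Bernstein-type estimate (cf.\ \cite[Lemma 2.4]{BCD2011}): there are constants $c,C>0$ so that, for all $p\in[1,\infty]$ and $t>0$, any $u$ with $\mathrm{supp}\,\hat u\subset 2^j\mathcal A$ and $j\ge0$ obeys $\|e^{t\Delta}u\|_{L^p}\le Ce^{-c2^{2j}t}\|u\|_{L^p}$, whereas any $u$ with $\mathrm{supp}\,\hat u\subset B(0,\tfrac43)$ obeys only $\|e^{t\Delta}u\|_{L^p}\le C\|u\|_{L^p}$. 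On the torus I would prove this by writing $e^{t\Delta}u=K_j(t)\ast u$ with $\widehat{K_j(t)}=e^{-4\pi^2t|\cdot|^2}\tilde\chi_j$, where $\tilde\chi_j$ is a slightly fattened cutoff equal to $1$ on $\mathrm{supp}\,\chi_j$, and bounding $\|K_j(t)\|_{L^1}\lesssim e^{-c2^{2j}t}$ by rescaling together with the rapid decay and smoothness of the kernel; Young's inequality then closes the block estimate.

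For part (i) I apply $\Delta_j$ to $e^{tA}f$ and use the block bound. For $j\ge0$,
\begin{equation*}
2^{j(\alpha+\delta)}\|\Delta_j e^{tA}f\|_{L^p}\lesssim e^{-t}\bigl(2^{j\delta}e^{-c2^{2j}t}\bigr)2^{j\alpha}\|\Delta_j f\|_{L^p},
\end{equation*}
and the elementary bound $\sup_{x\ge1}x^{\delta}e^{-cx^2t}\lesssim t^{-\delta/2}$ (the maximiser sits at $x^2\sim t^{-1}$) turns the bracketed prefactor into $t^{-\delta/2}$. The low block $j=-1$ enjoys no frequency gain, but here the factor $e^{-t}$ produced by the shift $-\mathrm I$ in $A$ supplies $e^{-t}\lesssim t^{-\delta/2}$ uniformly in $t>0$, which is exactly what is needed for large $t$; this is the reason the semigroup is taken with $A$ rather than $\Delta$. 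Taking the $\ell^q$ norm over $j$ yields \eqref{Heat-Smooth}.

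For part (ii) I estimate $(\mathrm I-e^{tA})\Delta_j f$ in two ways and interpolate. The trivial bound gives $\|(\mathrm I-e^{tA})\Delta_j f\|_{L^p}\lesssim\|\Delta_j f\|_{L^p}$, while writing $\mathrm I-e^{tA}=-\int_0^tAe^{sA}\,ds$ and using $\|Ae^{sA}\Delta_j f\|_{L^p}\lesssim 2^{2j}e^{-c2^{2j}s}\|\Delta_j f\|_{L^p}$ (the multiplier $A$ has size $\lesssim2^{2j}$ on the block) gives, after integrating in $s$, $\|(\mathrm I-e^{tA})\Delta_j f\|_{L^p}\lesssim\min(2^{2j}t,1)\|\Delta_j f\|_{L^p}$. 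Since $\min(x,1)\le x^{(\beta-\alpha)/2}$ for all $x\ge0$ precisely when $0\le\beta-\alpha\le2$, these two bounds combine to
\begin{equation*}
2^{j\alpha}\|(\mathrm I-e^{tA})\Delta_j f\|_{L^p}\lesssim t^{(\beta-\alpha)/2}2^{j\beta}\|\Delta_j f\|_{L^p}.
\end{equation*}
The block $j=-1$ is treated identically with the ball versions of the estimates, and the $\ell^q$ norm over $j$ gives \eqref{Heat-time-reg}.

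The only genuinely analytic step is the Fourier-localised smoothing estimate; once it is available, both Schauder bounds are bookkeeping over dyadic blocks. The point I would be most careful about is transferring the $\mathbb R^d$ kernel bound to $\mathbb T^d$: because the frequency variable is discrete, the $L^1$ control of $K_j(t)$ must be obtained by periodising the full-space kernel via Poisson summation (or by invoking the torus Bernstein lemma directly), with constants uniform in both $j$ and $t$.
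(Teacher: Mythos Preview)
Your argument is correct and is essentially the standard proof of the Schauder estimates in Besov spaces; in particular the block-wise smoothing bound $\|e^{t\Delta}\Delta_j u\|_{L^p}\lesssim e^{-c2^{2j}t}\|\Delta_j u\|_{L^p}$, the interpolation $\min(2^{2j}t,1)\le (2^{2j}t)^{(\beta-\alpha)/2}$, and the observation that the shift $-\mathrm I$ in $A$ handles the low block for large $t$ are exactly the right ingredients.

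However, there is nothing to compare: the paper does not give its own proof of this lemma. It is stated in the preliminaries section with a reference to \cite[Propositions~3.11, 3.12]{MW2010} and \cite[Lemmas~A.7, A.8]{GIP13} and used as a black box thereafter. Your sketch is in fact precisely the argument one finds in those references (and in \cite[Lemma~2.4]{BCD2011} for the Fourier-localised smoothing), so you have correctly reconstructed the cited proof rather than produced a different route.
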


The following multiplicative inequalities play a central role  later and we  treat separately for  cases of positive and negative regularity (cf.  \cite[Corollaries 3.19,3.21]{MW2010}, \cite[Lemma 2.1]{GIP13}).

\begin{lemma}{\rm (Multiplicative inequalities)}\label{Multi-ineq}
{\rm (i)} Let $\alpha>0$ and $p,p_1,p_2\in[1,\infty]$ be such that $\frac{1}{p}=\frac{1}{p_1}+\frac{1}{p_2}$. Then
\begin{equation}\label{Multi-ineq-1}
\|fg\|_{\mathcal B_{p,q}^{\alpha}(\mathbb T^d)}\lesssim \|f\|_{ \mathcal B_{p_1,q}^{\alpha}(\mathbb T^d)}\|g\|_{\mathcal B_{p_2,q}^{\alpha}(\mathbb T^d)}.
\end{equation}
{\rm (ii)} Let $\beta>0>\alpha$ be such that $\beta+\alpha>0$ and let $p,p_1,p_2\in[1,\infty]$ be such that $\frac{1}{p}=\frac{1}{p_1}+\frac{1}{p_2}$. Then
\begin{equation}\label{Multi-ineq-2}
\|fg\|_{\mathcal B_{p,q}^{\alpha}(\mathbb T^d)} \lesssim
\|f\|_{\mathcal B_{p_1,q}^{\alpha}(\mathbb T^d)}\|g\|_{\mathcal B_{p_2,q}^{\beta}(\mathbb T^d)}.
\end{equation}
\end{lemma}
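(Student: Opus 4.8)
The plan is to split the error through the Da Prato--Debussche decompositions \eqref{sum} and \eqref{sumN}, writing
\[
X_t-X_t^N=(\bar Z_t-\bar Z_t^N)+(Y_t-Y_t^N),
\]
and to estimate the two contributions separately in the weighted norm $t^{\gamma'}\|\cdot\|_{-\alpha}$. For the linear part the rate is exactly the first main result proved in Section \ref{SEC3}: $\bar Z^N\to\bar Z$ in $C([0,T];\C^{-\alpha})$ at order $N^{\delta-\alpha}$, and by the renormalised construction \eqref{wick-ZN-2-3} the whole vector of Wick data $\underline{\bar Z}^N\to\underline{\bar Z}$ converges at the same order. Granting this, the theorem reduces to showing that $\sup_{t\le T}t^{\gamma'}\|Y_t-Y_t^N\|_{-\alpha}\lesssim N^{\delta-\alpha}$ on an event of probability tending to one.

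First I would set $D_t:=Y_t-Y_t^N$ and subtract the mild formulations \eqref{mild-2} and \eqref{mild-2-N} to obtain
\[
D_t=\int_0^t e^{(t-s)\Delta}\big[\Psi(Y_s,\underline{\bar Z}_s)-\Psi(Y_s^N,\underline{\bar Z}^N_s)\big]\,\d s+\int_0^t(\mathrm I-P_N)e^{(t-s)\Delta}\Psi(Y_s^N,\underline{\bar Z}^N_s)\,\d s.
\]
Using the explicit polynomial form \eqref{wick-YZ} I would expand the bracket telescopically, first changing $Y\mapsto Y^N$ at fixed Wick data and then $\underline{\bar Z}\mapsto\underline{\bar Z}^N$ at fixed $Y^N$. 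The first piece is affine in $D$ with coefficients polynomial in $Y,Y^N$ and the Wick data, and is absorbed by a Gronwall argument; the second is driven by the increments $\underline{\bar Z}-\underline{\bar Z}^N$ and directly inherits the order $N^{\delta-\alpha}$ from Section \ref{SEC3}. For the projection integral I would use, for $\kappa>0$, the elementary bound $\|(\mathrm I-P_N)g\|_{-\alpha}\lesssim N^{-\kappa}\|g\|_{\kappa-\alpha}$ (since $\mathrm I-P_N$ retains only frequencies $|m|>N$) and combine it with the Schauder smoothing of Lemma \ref{Heat-Smooth1} (applied to $e^{t\Delta}=e^{t}e^{tA}$) to get $\|(\mathrm I-P_N)e^{(t-s)\Delta}f\|_{-\alpha}\lesssim N^{-\kappa}(t-s)^{-\kappa/2}\|f\|_{-\alpha}$; taking $\kappa=\alpha-\delta$ yields a factor $N^{\delta-\alpha}$ against an integrable time singularity, so the projection error is no worse than the rate coming from the linear part.

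To run these estimates I need uniform-in-$N$ a priori bounds on $Y$ and $Y^N$ in a space $C((0,T];\C^{\gamma})$ with $\gamma>\alpha$ and a blow-up $\|Y_t\|_{\gamma}\lesssim t^{-\gamma'}$ near $t=0$ inherited from the rough datum $X_0\in\C^{-\alpha}$; these come from a Da Prato--Debussche continuity argument for \eqref{mild-2} and \eqref{mild-2-N}, in which each product $Y^k z^{:j-k:}$ is controlled by Lemma \ref{Multi-ineq}, the condition $\gamma>\alpha$ making the worst products (such as $Y^2\bar Z$ and $Y\bar Z^{:2:}$) well defined through Lemma \ref{Multi-ineq}(ii). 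The central quantitative point is that the cubic term $a_3Y^3$ produces a temporal singularity of order $s^{-3\gamma'}$ in the mild formula, whose integrability against the Schauder kernel demands $3\gamma'<1$; since the admissible weights must also satisfy the hypothesis $\gamma'>3\alpha/2$, such a weight exists precisely when $3\alpha/2<1/3$, i.e. $\alpha<2/9$. Closing the Gronwall inequality in the weighted norm then gives the deterministic bound $\sup_{t\le T}t^{\gamma'}\|D_t\|_{-\alpha}\lesssim N^{\delta-\alpha}$ on the event where the random Wick norms and their increments obey the estimates of Section \ref{SEC3}.

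Finally, the probabilistic conclusion follows because the $L^p(\Omega;C([0,T];\C^{-\alpha}))$ convergence of the Wick data at rate $N^{\delta-\alpha}$, together with the a.s.\ finiteness of the random constants entering the Gronwall estimate, makes the complementary ``bad'' events have probability tending to zero by Chebyshev's inequality. The hardest part, I expect, is the simultaneous control of the negative-regularity Wick terms multiplied against the near-zero-singular $Y,Y^N$ while keeping all constants uniform in $N$: one must track the temporal weight, the projection gain and the convergence rate at once, and it is exactly the tightness of this balance that restricts the admissible range of $\alpha$.
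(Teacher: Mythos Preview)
Your proposal does not address the stated lemma at all. The statement you were asked to prove is Lemma~\ref{Multi-ineq}, the standard multiplicative (paraproduct) inequalities in Besov spaces; in the paper this lemma is simply quoted from the literature (cf.\ \cite[Corollaries 3.19, 3.21]{MW2010}, \cite[Lemma 2.1]{GIP13}) and carries no proof. What you have written instead is a sketch of the argument for Theorem~\ref{main-rate}, the main convergence-rate result for the Allen--Cahn approximation. The Da~Prato--Debussche splitting $X-X^N=(\bar Z-\bar Z^N)+(Y-Y^N)$, the Gronwall closure in a weighted $\C^{-\alpha}$ norm, the discussion of $\alpha<2/9$, and the stopping-time/Chebyshev reduction to a good event all belong to Section~\ref{Pathwise-error}, not to Lemma~\ref{Multi-ineq}.

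If the task was genuinely to justify Lemma~\ref{Multi-ineq}, the content should be the Bony paraproduct decomposition $fg=T_fg+T_gf+R(f,g)$ and the estimates $\|T_fg\|_{\mathcal B^{\alpha}_{p,q}}\lesssim\|f\|_{L^{p_1}}\|g\|_{\mathcal B^{\alpha}_{p_2,q}}$, $\|R(f,g)\|_{\mathcal B^{\alpha+\beta}_{p,q}}\lesssim\|f\|_{\mathcal B^{\alpha}_{p_1,q}}\|g\|_{\mathcal B^{\beta}_{p_2,q}}$ when $\alpha+\beta>0$, together with the embedding $\mathcal B^{\alpha}_{p,q}\hookrightarrow L^p$ for $\alpha>0$; none of this appears in your write-up. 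As it stands, the proposal is a proof of a different theorem and cannot be accepted as an argument for the lemma in question.
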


Throughout the paper we consider the equations on $\mathbb T^2$. For notations' simplicity, for any $\alpha\in\mathbb R$ and $p,q\in[1,\infty)$, let
\begin{equation}\label{nota}
\mathcal B^{\alpha}_{p,q}:=\mathcal B^{\alpha}_{p,q}(\mathbb T^2),~~\mathcal B^{\alpha}_p:=\mathcal B^{\alpha}_{p,\infty}(\mathbb T^2),~~\mathcal C^{\alpha}:=\mathcal B_{\infty,\infty}^{\alpha}(\mathbb T^2)
\end{equation}
and hence we denote their norms by $\|\cdot\|_{\mathcal B^{\alpha}_{p,q}},\|\cdot\|_{\mathcal B^{\alpha}_p}$ and $\|\cdot\|_{{\alpha}}$, respectively.
\begin{lemma}{\rm \cite[Proposition A.11]{TW2018}}\label{est-P-PN-P}
Let $P_N,N\geq 1$ be defined in (\ref{pro-ope}). Then for every $\alpha\in\mathbb R$, $p,q\in[1,\infty]$ and $\lambda>0$
\begin{equation}\label{est-P-PN}
\|P_Nf-f\|_{\mathcal B^{\alpha}_{p,q}}\lesssim\frac{(\log N)^2}{N^{\lambda}}\|f\|_{\mathcal B^{\alpha+\lambda}_{p,q}},
\end{equation}
\begin{equation}\label{est-PN}
\|P_Nf\|_{\mathcal B^{\alpha}_{p,q}}\lesssim \|f\|_{\mathcal B^{\alpha+\lambda}_{p,q}}.~~~~~~
\end{equation}
\end{lemma}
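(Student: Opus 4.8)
The plan is to prove both \eqref{est-P-PN} and \eqref{est-PN} from a single per-block analysis, comparing the sharp frequency cut-off defining $P_N$ in \eqref{pro-ope} against the dyadic annuli carrying the Littlewood--Paley blocks. Since $P_N$ and $\Delta_j$ are Fourier multipliers they commute, so $\Delta_j P_N f=P_N\Delta_j f$ depends only on the block $\Delta_j f$. By property (i) of the partition of unity, $\mathrm{supp}(\chi_j)\subset\{\tfrac34 2^j\le|m|\le\tfrac83 2^j\}$ for $j\ge0$, so relative to the ball $\{|m|\le N\}$ each index $j$ falls into exactly one of three regimes: a \emph{low} regime $\tfrac83 2^j\le N$, where the annulus lies inside the ball and $\Delta_j P_N f=\Delta_j f$; a \emph{high} regime $\tfrac34 2^j>N$, where $\Delta_j P_N f=0$; and a \emph{critical} regime $\tfrac34 2^j\le N<\tfrac83 2^j$, which is met by at most a bounded number (here $\le 2$) of indices, all satisfying $2^j\backsimeq N$. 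This trichotomy is the backbone of both estimates.

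First I would dispose of the low and high regimes, which are clean and carry no logarithm. In the low regime the blocks of $P_Nf-f$ vanish identically, while those of $P_Nf$ equal those of $f$; summing the latter in the norm \eqref{B-SPACE} gives at most $\|f\|_{\mathcal B^\alpha_{p,q}}\le\|f\|_{\mathcal B^{\alpha+\lambda}_{p,q}}$ by Lemma \ref{V-SUB-H}(i). In the high regime $\Delta_j(P_Nf-f)=-\Delta_j f$, and here I would spend the regularity gap: writing $2^{j\alpha}=2^{-j\lambda}2^{j(\alpha+\lambda)}$ and using $2^j\gtrsim N$ gives $2^{-j\lambda}\lesssim N^{-\lambda}$, so the high-frequency part of the $\ell^q$ sum is $\lesssim N^{-\lambda}\|f\|_{\mathcal B^{\alpha+\lambda}_{p,q}}$, with \emph{no} logarithmic factor; the high regime contributes nothing to $\|P_Nf\|_{\mathcal B^\alpha_{p,q}}$ at all. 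Thus everything is under control except for the finitely many critical bands.

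The heart of the matter---and the step I expect to be the main obstacle---is a uniform control of the cut-off on a single critical band, namely $\|\Delta_j P_N f\|_{L^p}\lesssim(\log N)^2\,\|\Delta_j f\|_{L^p}$ for $2^j\backsimeq N$ and every $p\in[1,\infty]$. Writing $P_N\Delta_j f=\tilde K_{j,N}\ast\Delta_j f$, where $\tilde K_{j,N}$ is the kernel whose Fourier coefficients are $\mathbf 1_{\{|m|\le N\}}$ multiplied by a smooth bump equal to $1$ on $\mathrm{supp}(\chi_j)$, Young's inequality reduces the claim to the Dirichlet-type bound $\|\tilde K_{j,N}\|_{L^1(\T)}\lesssim(\log N)^2$. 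For $p=2$ this is free because $P_N$ is an orthogonal projection; the delicate cases are $p\in\{1,\infty\}$ (in particular $\mathcal C^\alpha=\mathcal B^\alpha_{\infty,\infty}$), and general $p$ then follows by interpolation. The two powers of $\log N$ correspond to the dimension $d=2$, the expected cost being one Lebesgue-constant factor $\log N$ per frequency coordinate; what matters is that this cost is only logarithmic rather than a power of $N$, which is exactly what lets the estimate hold for every $\lambda>0$, and making this $L^1$ kernel bound precise is where the real work lies.

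Granting the critical-band bound, both conclusions assemble at once. For \eqref{est-P-PN} the operator $\mathrm I-P_N$ on a critical band obeys $\|\Delta_j(P_Nf-f)\|_{L^p}\lesssim(\log N)^2\|\Delta_j f\|_{L^p}$ (using $1+(\log N)^2\lesssim(\log N)^2$), and since there are only boundedly many such bands, all with $2^j\backsimeq N$, combining the regularity gap $2^{j\alpha}\lesssim N^{-\lambda}2^{j(\alpha+\lambda)}$ with the high-regime estimate yields $\|P_Nf-f\|_{\mathcal B^\alpha_{p,q}}\lesssim(\log N)^2 N^{-\lambda}\|f\|_{\mathcal B^{\alpha+\lambda}_{p,q}}$. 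For \eqref{est-PN} the same critical bands contribute $\lesssim(\log N)^2 N^{-\lambda}\|f\|_{\mathcal B^{\alpha+\lambda}_{p,q}}\lesssim\|f\|_{\mathcal B^{\alpha+\lambda}_{p,q}}$, which together with the low regime gives the logarithm-free bound $\|P_Nf\|_{\mathcal B^\alpha_{p,q}}\lesssim\|f\|_{\mathcal B^{\alpha+\lambda}_{p,q}}$. In both cases the only possible source of trouble is the single-band kernel estimate, so the proof stands or falls on that lemma.
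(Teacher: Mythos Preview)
The paper does not supply a proof of this lemma; it is quoted from \cite[Proposition~A.11]{TW2018} and used as a black box, so there is no in-paper argument to compare your proposal against. Your three-regime decomposition (low / critical / high bands according to the position of $2^j$ relative to $N$) is the standard and correct skeleton for such estimates, and your treatment of the low and high regimes is clean.

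The one point that deserves scrutiny is the critical-band kernel bound $\|\tilde K_{j,N}\|_{L^1(\T)}\lesssim(\log N)^2$. Your heuristic ``one Lebesgue constant $\log N$ per frequency coordinate'' is correct when the projection cuts off on a \emph{cube} $\{\max_i|m_i|\le N\}$, because then the kernel is a tensor product of one-dimensional Dirichlet kernels, each contributing a factor $\log N$ in $L^1$. However, the projection in \eqref{pro-ope} is written with the Euclidean norm $|m|=\sqrt{m_1^2+m_2^2}$, and the circular Dirichlet kernel on $\T$ has $L^1$ norm of order $N^{1/2}$, not $(\log N)^2$; localizing with a smooth annular bump does not help, since the polynomial growth comes from the sharp cut along the curved boundary $|m|=N$. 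Read literally, the $L^1$-kernel route you sketch would therefore only yield $\|P_Nf-f\|_{\mathcal B^\alpha_{p,q}}\lesssim N^{1/2-\lambda}\|f\|_{\mathcal B^{\alpha+\lambda}_{p,q}}$ for the Euclidean-ball cutoff. The $(\log N)^2$ statement matches the \emph{square} Galerkin projection, which is how $P_N$ is set up in \cite{TW2018}; the Euclidean norm appearing in \eqref{pro-ope} looks like a notational inconsistency in the present paper rather than a defect in your plan. If you adopt the square cutoff (or any cutoff whose multiplier factors across coordinates), your argument is complete as written.
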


\end{section}

\section{Stochastic heat equation}\label{SEC3}
In this section we prove the convergence rates for  stochastic  heat equation \eqref{ini1-Eq}.

\subsection{Wick powers}\label{set-Wick powers}
Now we follow the idea from \cite{MW2010}  to define the Wick powers of the solutions
to stochastic heat equations \eqref{ini1-Eq} in the paths space. Let $(\Omega,\mathcal F,\mathbb P)$ be a probability space and $\xi$ is space-time white noise on $\mathbb R\times \T$. Set
\begin{equation*}\label{fil}
\tilde{\mathcal F}_t:=\sigma\big(\{\xi(\phi):~\phi|_{(t,\infty)\times\T}\equiv0,~\phi\in L^2((-\infty,\infty)\times \T)\}\big)
\end{equation*}
for $t>-\infty$ and denote by $(\mathcal F_t)_{t>-\infty}$ the usual augmentation (see \cite[Chapter 1.4]{RY1999}) of the filtration $(\tilde{\mathcal F}_t)_{t>-\infty}$.
For $n=1,2,3$,  consider the multiple stochastic integral given by
\begin{equation}\label{HEAT-MILD-s-23}
Z_{-\infty,t}^{:n:}(\phi):=\int_{\{(-\infty,t]\times\T\}^n}\langle\phi,\Pi_{i=1}^n H(t-s_i,x_i-\cdot)\rangle\xi(\otimes_{i=1}^n \d s_i,\otimes_{i=1}^n \d x_i)
\end{equation}
for every $t>-\infty$ and $\phi\in C^{\infty}(\T)$. $Z_{-\infty,\cdot}^{:1:}$ is also denoted as $Z_{-\infty,\cdot}$. Here $H(r,\cdot),~r\neq0$, stands for the periodic heat kernel associated to the generator $A=\Delta-{\rm I}$ on $\T$ given by
\begin{equation}\label{ker-perid}
H(r,x):=\sum_{m\in\Z}e^{-rI_m}e_m(x),~x\in \T,~r\in\mathbb R \setminus \{0\},
\end{equation}
with $I_m:=1+4\pi^2|m|^2$ and $e_m=e^{\iota2\pi m\cdot}$ for $m\in\Z$.  Let $S(t):=e^{-tA}$ denote the semigroup associated to  $A$ in $L^2(\T)$.
$Z^{:n:}_{-\infty,\cdot}$ is called the $n$-th Wick power of $Z_{-\infty,\cdot}$. In particular, using Duhamel's principle (c.f. \cite[Section 2.3]{Ev10}), we have that
\[
   Z_{t}:=Z_{-\infty,t}-S(t)Z_{-\infty,0},~t\geq 0,
\]
solves the stochastic heat equation with zero initial condition, i.e.
\begin{equation}\label{HEAT-s}
\left\{
 \begin{aligned}
\t Z&=AZ+\xi,~~\text{on~}(0,\infty)\times\T,\\
Z({0})&=0~~\text{on~}\T.
 \end{aligned}
 \right.
\end{equation}
Following the technique in \cite[Section 2.1]{TW20182}, we set for $n=1, 2,3$
\begin{equation}\label{Z230twick}
\begin{aligned}
    Z^{:n:}_{t}:=\sum_{k=0}^n\binom{n}{k}(-1)^k\Big(S(t)Z_{-\infty,0}\Big)^kZ_{-\infty,t}^{:n-k:},
\end{aligned}
\end{equation}
by letting $ z^{:1:}=z$ and $z^{:0:}=1$.

\subsection{Finite dimensional approximations}\label{Finite-App-section}
Let $ Z_{-\infty,t}^{:n:},Z_{t}^{:n:}, t\geq 0$  be defined in the above section. For $N\geq 1$ and let $P_N$ be given in \eqref{pro-ope}, we define  finite dimensional approximations of $Z_{-\infty,t}$  as its Galerkin projection $Z^N_{-\infty,t}:=P_NZ_{-\infty,t}$ and define  finite dimensional approximations of $Z_{-\infty,t}^{:n:}$, $n=2,3$ by renormalization as
\begin{equation}\label{Zst2-3-APP}
   ( Z^N_{-\infty,t})^{:2:}:= (Z^N_{-\infty,t})^2-  \mathfrak R^N,~
   ( Z^N_{-\infty,t})^{:3:}:= (Z^N_{-\infty,t})^3-  3\mathfrak R^NZ^N_{-\infty,t},
\end{equation}
with the renormalization  constants
\begin{equation*}\label{renor-cosnt}
    \mathfrak R^N:=\|1_{[0,\infty)}H_N\|_{L^2(\mathbb R\times \T)}^2,
\end{equation*}
where $H_N:=P_NH$ and $H$  is given in \eqref{ker-perid}. Comparing with \eqref{Z230twick}, we define finite dimensional approximations of $Z_t^{:n:}$ as for $N\geq 1$, $t\geq 0$ and $n=1,2,3$
\begin{equation}\label{ZN1230t-wick}
\begin{aligned}
    (Z_{t}^N)^{:n:}:=\sum_{k=0}^n\binom{n}{k}(-1)^k\Big(S(t)Z_{-\infty,0}^N\Big)^k(Z_{-\infty,t}^N)^{:n-k:}
\end{aligned}
\end{equation}
by similarly letting $ z^{:1:}=z$ and $z^{:0:}=1$.  Then $Z_{t}^N$ solve approximating equations \eqref{ini1-Eq_N} with  initial value zero.

\subsection{Main results of  stochastic heat equation and its Galerkin approximations} \label{SUB3-3}
We know that $Z_t$ and $Z^N_t,N\geq 1$ are the solutions to  stochastic heat equation \eqref{ini1-Eq} and its Galerkin approximations \eqref{ini1-Eq_N} with  initial value zero, respectively.
In this subsection, we first prove the convergence rate for  stochastic heat equation  with  initial value zero. Furthermore, we discuss the convergence rate for linear equation with  initial value $X_0$.

To begin with, we recall the following   Kolmogorov-type result.
\begin{lemma}{\rm \cite[Lemma 5.2]{MW2010}} \label{exist-Z-cont}
Let $(t,\phi)\rightarrow Z(t,\phi)$ be a map from $(0,\infty)\times L^2(\T)\rightarrow L^2(\Omega,\mathcal F,\mathbb P)$ which is linear and continuous in $\phi$. Assume that for some $p>1$, $\alpha\in\mathbb R$ and $\nu>\frac{1}{p}$ and all $T>0$, there exists a function $K_T\in L^{\infty}(\T)$ such that for  all $\kappa\geq-1,~x\in\T$ and $s,t\in[0,T]$
\begin{equation}\label{Z-exist-1}\begin{aligned}
\mathbb E|Z(t,\eta_\kappa(\cdot-x))|^p&\leq K_T(x)^p2^{-\kappa\alpha p},\\
\mathbb E|Z(t,\eta_\kappa(\cdot-x))-Z(s,\eta_\kappa(\cdot-x))|^p&\leq K_T(x)^p2^{-\kappa(\alpha-\nu) p}|t-s|^{\nu p}
\end{aligned}
\end{equation}
with $\eta_\kappa,\kappa\geq -1$ defined in \eqref{LPBs2}. Then there exists a random distribution $\tilde{Z}$ which is $C([0,\infty);\mathcal B^{\alpha'}_{p,p})$-valued for any $\alpha'<\alpha-\nu$ and satisfies that for all $t>0$ and $\phi\in\mathcal S(\T)$
\begin{equation*}
\label{cont-Z-EQ}
Z(t,\phi)=\langle\tilde{Z}(t),\phi\rangle ~~\text{almost surely.}
\end{equation*}
Furthermore, for every $T>0$, there exists a constant $C(T,\alpha,\alpha',p)>0$ such that
\begin{equation}
\label{Lp-Z-EQ}
\mathbb E \sup_{0\leq t\leq T}\|\tilde{Z}(t,\cdot)\|_{\mathcal B_{p,p}^{\alpha'}}^p\leq C(T,\alpha,\alpha',p)  \|K_T\|_{L^p}^p.
\end{equation}
\end{lemma}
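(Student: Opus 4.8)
The plan is to realise the candidate distribution through its Littlewood--Paley blocks and then run a Kolmogorov-type continuity argument in the time variable, with $\mathcal B^{\alpha'}_{p,p}$ playing the role of the state space. For each $j\ge-1$ and $t>0$ I would introduce the random field $Z_j(t,x):=Z(t,\eta_j(\cdot-x))$, which is well defined because $\eta_j(\cdot-x)\in L^2(\T)$ and $Z(t,\cdot)$ is linear and continuous, and which admits a jointly measurable version since $x\mapsto\eta_j(\cdot-x)$ is continuous in $L^2$. Writing $\eta_j(y)=\sum_m\chi_j(m)e_m(y)$ one sees $Z_j(t,x)=\sum_m\chi_j(m)Z(t,e_m)\overline{e_m(x)}$, so $Z_j(t,\cdot)$ is frequency-localised in $\mathcal A_{2^j}$ (in the ball for $j=-1$). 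I then set $\tilde Z(t):=\sum_{j\ge-1}Z_j(t,\cdot)$; using this localisation, the finite overlap of adjacent annuli, and the $L^p$-boundedness of the $\Delta_k$, one gets $\|\tilde Z(t)\|_{\mathcal B^{\alpha'}_{p,p}}^p\lesssim\sum_{j}2^{j\alpha' p}\|Z_j(t,\cdot)\|_{L^p}^p$, and likewise for increments. Upper bounds of this form are all that the argument requires.

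The two hypotheses translate directly into two moment bounds for this Besov norm. Integrating the first bound in $x$ gives $\E\|Z_j(t,\cdot)\|_{L^p}^p=\int_\T\E|Z_j(t,x)|^p\,\d x\le\|K_T\|_{L^p}^p\,2^{-j\alpha p}$, whence $\E\|\tilde Z(t)\|_{\mathcal B^{\alpha'}_{p,p}}^p\le\|K_T\|_{L^p}^p\sum_j2^{j(\alpha'-\alpha)p}$, which is $\lesssim\|K_T\|_{L^p}^p$ for every $\alpha'<\alpha$. The second hypothesis gives $\E\|Z_j(t,\cdot)-Z_j(s,\cdot)\|_{L^p}^p\le\|K_T\|_{L^p}^p\,2^{-j(\alpha-\nu)p}|t-s|^{\nu p}$, hence $\E\|\tilde Z(t)-\tilde Z(s)\|_{\mathcal B^{\alpha'}_{p,p}}^p\le\|K_T\|_{L^p}^p|t-s|^{\nu p}\sum_j2^{j(\alpha'-\alpha+\nu)p}$. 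This is exactly where the restriction $\alpha'<\alpha-\nu$ is used: the geometric sum converges precisely when $\alpha'<\alpha-\nu$, leaving the clean increment estimate $\E\|\tilde Z(t)-\tilde Z(s)\|_{\mathcal B^{\alpha'}_{p,p}}^p\lesssim\|K_T\|_{L^p}^p|t-s|^{\nu p}$.

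The time-continuity step is the main obstacle, and it hinges on a borderline integrability issue. Applying the Garsia--Rodemich--Rumsey / fractional Sobolev machinery to the $\mathcal B^{\alpha'}_{p,p}$-valued process $t\mapsto\tilde Z(t)$ at the critical exponent $\nu$ fails, since the expectation of the $W^{\nu,p}$-seminorm is comparable to $\int_0^T\int_0^T|t-s|^{-1}\,\d s\,\d t$, which diverges logarithmically. The remedy is to work sub-critically: I fix $\theta$ with $1/p<\theta<\nu$ (possible precisely because $\nu>1/p$) and estimate $\E\int_0^T\int_0^T\frac{\|\tilde Z(t)-\tilde Z(s)\|_{\mathcal B^{\alpha'}_{p,p}}^p}{|t-s|^{1+\theta p}}\,\d s\,\d t\lesssim\|K_T\|_{L^p}^p\int_0^T\int_0^T|t-s|^{(\nu-\theta)p-1}\,\d s\,\d t$, which is finite since $\theta<\nu$. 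As $\theta p>1$, the embedding $W^{\theta,p}([0,T];\mathcal B^{\alpha'}_{p,p})\hookrightarrow C^{\theta-1/p}([0,T];\mathcal B^{\alpha'}_{p,p})$ furnishes a continuous modification; combining its sup bound with the fixed-time estimate of the previous paragraph (averaged over a base point $t_0\in[0,T]$) yields $\E\sup_{0\le t\le T}\|\tilde Z(t)\|_{\mathcal B^{\alpha'}_{p,p}}^p\le C(T,\alpha,\alpha',p)\,\|K_T\|_{L^p}^p$. Patching over an increasing sequence of horizons $T$ produces the $C([0,\infty);\mathcal B^{\alpha'}_{p,p})$-valued modification.

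It remains to identify $Z(t,\phi)=\langle\tilde Z(t),\phi\rangle$ for $\phi\in\mathcal S(\T)$. By Littlewood--Paley reconstruction $\langle\tilde Z(t),\phi\rangle=\sum_j\langle Z_j(t,\cdot),\phi\rangle$, and a stochastic Fubini together with the linearity and continuity of $Z(t,\cdot)$ in $\phi$ gives $\langle Z_j(t,\cdot),\phi\rangle=Z\bigl(t,\eta_j\ast\phi\bigr)=Z(t,\Delta_j\phi)$; summing and using continuity of $Z(t,\cdot)$ with $\phi=\sum_j\Delta_j\phi$ in $L^2(\T)$ recovers $Z(t,\phi)$. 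This last step is routine once convergence of the relevant series in $L^2(\Omega)$ is checked. I expect all of these manipulations to be straightforward except for the borderline-integrability manoeuvre in the third paragraph, which is where the precise interplay of the hypotheses $\nu>1/p$ and $\alpha'<\alpha-\nu$ is genuinely needed.
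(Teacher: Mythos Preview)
The paper does not supply a proof of this lemma: it is quoted verbatim as \cite[Lemma~5.2]{MW2010} and used as a black box, so there is no in-paper argument to compare against. Your outline is the standard route to such Kolmogorov-type results in Besov spaces and is essentially what the cited reference does: build the candidate distribution from its Littlewood--Paley blocks $Z_j(t,x)=Z(t,\eta_j(\cdot-x))$, use the two hypotheses to control $\E\|\tilde Z(t)\|_{\mathcal B^{\alpha'}_{p,p}}^p$ and the $p$-th moment of increments, and then run a fractional-Sobolev/Kolmogorov continuity argument in time. Your identification of the delicate point---that one must work at a subcritical exponent $\theta\in(1/p,\nu)$ to make the double time-integral converge, which is exactly why the hypotheses $\nu>1/p$ and $\alpha'<\alpha-\nu$ are both needed---is correct and is the heart of the matter. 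The remaining steps (frequency localisation giving the Besov-norm comparison, stochastic Fubini for the identification $\langle\tilde Z(t),\phi\rangle=Z(t,\phi)$) are routine as you say.
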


We use Lemma \ref{exist-Z-cont} to obtain  the regularity properties of   $Z_t^{:n:},(Z^N_t)^{:n:}$.

  We remark that properties for  stochastic heat equation have been widely discussed in the literature (cf. \cite[Propositions 2.2,2.3]{TW20182},  \cite[Proposition 7.4]{TW2018}, \cite[Lemma 3.4]{RZZ2017}). Below we obtain uniform bounds  for the approximating equations.

\begin{lemma}\label{modefi-Z}
Let $p>1$. Then for each $\alpha\in(0,1)$, $n=1,2,3$ and $N\geq 1$, the processes $Z_{-\infty,\cdot}^{:n:}$ and $(Z^N_{-\infty,\cdot})^{:n:}$ defined in \eqref{HEAT-MILD-s-23} and \eqref{Zst2-3-APP}  belong to  $ C([0,T]; \C^{-\alpha})$ $\mathbb P$-a.s. Moreover, we have
\begin{equation}\label{Z^nt-lp}
   \E \sup_{0\leq t\leq T}\|Z^{:n:}_{-\infty,t}\|_{ {-\alpha}}^p<\infty,
\end{equation}
\begin{equation}\label{ZN^nt-lp}
 \sup_{N\geq1}   \E \sup_{0\leq t\leq T}\|(Z^N_{-\infty,t})^{:n:}\|_{ {-\alpha}}^p<\infty.
\end{equation}
\end{lemma}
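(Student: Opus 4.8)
The plan is to verify the two hypotheses \eqref{Z-exist-1} of the Kolmogorov-type Lemma \ref{exist-Z-cont} for the processes $Z_{-\infty,\cdot}^{:n:}$ and, crucially, \emph{uniformly in $N$}, for $(Z^N_{-\infty,\cdot})^{:n:}$, and then feed the bound \eqref{Lp-Z-EQ} through the Besov embedding $\mathcal B^{\alpha'}_{p,p}\hookrightarrow \mathcal C^{-\alpha}$ from Lemma \ref{V-SUB-H} by taking $p$ large. First I would fix $n\in\{1,2,3\}$ and write out the $n$-th Wick power tested against $\eta_\kappa(\cdot-x)$ as an element of the $n$-th homogeneous Wiener chaos. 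By the Wiener chaos equivalence of moments (Nelson's hypercontractivity), it suffices to compute the second moment; for the increment bound one also estimates the $L^2(\Omega)$ norm of the difference. The second moment of $Z^{:n:}_{-\infty,t}(\eta_\kappa(\cdot-x))$ reduces, after expanding the It\^o isometry for multiple integrals, to $n!$ times an integral of the $n$-th power of the covariance kernel
\[
C_t(y,y'):=\int_{-\infty}^{t}\!\!\int_{\T} H(t-s,y-w)H(t-s,y'-w)\,\d w\,\d s,
\]
convolved against $\eta_\kappa(\cdot-x)$ in both arguments. The key classical input is that $C_t(y,y')\lesssim 1+|\log|y-y'||$ near the diagonal and decays off it, and that $\sum_{m}\chi_\kappa(m)^2 I_m^{-1}\lesssim$ (a constant for $\kappa=-1$, logarithmic gain for large $\kappa$ in the $n=1$ case), so that one obtains $\mathbb E|Z^{:n:}_{-\infty,t}(\eta_\kappa(\cdot-x))|^2\lesssim 2^{2\kappa\varepsilon}$ for every $\varepsilon>0$; taking $\varepsilon=\alpha/2$ and $K_T$ constant gives the first line of \eqref{Z-exist-1} with exponent $\alpha$ (after adjusting). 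The time-increment bound \eqref{Z-exist-1} follows the same route, writing $H(t-\cdot)-H(s-\cdot)=-\int_s^t \partial_r H(r-\cdot)\,\d r$ or using the semigroup identity $S(t)-S(s)$ together with the smoothing $\|(\mathrm I - S(t-s))f\|\lesssim |t-s|^{\nu}\|f\|_{\text{better}}$ in the spirit of Lemma \ref{Heat-Smooth1}(ii), losing a factor $2^{\kappa\nu}$ as required.

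The genuinely new part compared to the cited references is uniformity in $N$. For $(Z^N_{-\infty,t})^{:n:}$ I would use the Wick recursion \eqref{Zst2-3-APP}: since $Z^N_{-\infty,t}=P_N Z_{-\infty,t}$ has Fourier support in $|m|\le N$, the renormalized products live in a fixed finite chaos, and the key point is that the covariance kernel of $Z^N_{-\infty,t}$ is $C^N_t(y,y')=\int_{-\infty}^t\!\int_\T (P_N H)(t-s,y-w)(P_N H)(t-s,y'-w)\,\d w\,\d s$, whose diagonal value is exactly $\mathfrak R^N$, so the Wick subtraction in \eqref{Zst2-3-APP} removes precisely the divergent contractions. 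Expanding $\mathbb E|(Z^N_{-\infty,t})^{:n:}(\eta_\kappa(\cdot-x))|^2$ via the diagram formula, every surviving term is an integral of products of $C^N_t$ over off-diagonal configurations, and since $0\le C^N_t(y,y')\le C_t(y,y')+O(1)$ uniformly in $N$ in the relevant integrable sense (the Fourier multiplier $\mathbf 1_{|m|\le N}$ is a contraction and the truncated sums are dominated by the full ones up to constants), one gets the \emph{same} bound $\lesssim 2^{2\kappa\varepsilon}$ with a constant independent of $N$. The same is true for the time increments. Hence Lemma \ref{exist-Z-cont} applies to each $(Z^N_{-\infty,\cdot})^{:n:}$ with a constant $C(T,\alpha,\alpha',p)$ and $\|K_T\|_{L^p}$ both independent of $N$, which yields \eqref{ZN^nt-lp} after choosing $\alpha'\in(-\alpha,\alpha-\nu)$ appropriately and $p$ large enough that $\mathcal B^{\alpha'}_{p,p}\hookrightarrow\mathcal C^{-\alpha}$; the bound \eqref{Z^nt-lp} and the a.s.\ continuity in $C([0,T];\mathcal C^{-\alpha})$ follow identically (indeed they are essentially the cited statements).

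I expect the main obstacle to be the combinatorial bookkeeping of the Wiener chaos expansions of the \emph{Wick-renormalized} products once one also carries the extra deterministic term $S(t)Z_{-\infty,0}$ (resp.\ its $P_N$-projection) appearing in \eqref{Z230twick} and \eqref{ZN1230t-wick}: one must check that the binomial combination there produces exactly the correct Wick structure relative to the time-$t$ covariance so that no spurious divergence survives, and that the deterministic factors $(S(t)Z_{-\infty,0}^N)^k$ are controlled uniformly in $N$ by $\|Z_{-\infty,0}\|_{\mathcal C^{-\alpha}}$-type quantities using Lemma \ref{est-P-PN-P} and the Schauder estimate \eqref{Heat-Smooth} (noting $S(t)$ gains regularity for $t>0$). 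The clean way to organize this is to first prove everything for $Z^{:n:}_{-\infty,\cdot}$ and $(Z^N_{-\infty,\cdot})^{:n:}$ (the genuinely stationary-in-structure objects), where the diagram estimates are cleanest, and only then deduce the statement for $Z^{:n:}_{t}$, $(Z^N_t)^{:n:}$ by the algebraic identities; the uniform-in-$N$ domination $C^N_t\le C_t+O(1)$ is the one inequality I would state and prove carefully, since everything downstream is a routine consequence of it together with the lemmas already in Section \ref{subsec-BS}.
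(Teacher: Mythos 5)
Your overall architecture coincides with the paper's: reduce to second moments via the chaos/hypercontractivity bound \eqref{iTO-iEQ}, verify the two hypotheses \eqref{Z-exist-1} of Lemma \ref{exist-Z-cont} with constants independent of $N$, and conclude through \eqref{Lp-Z-EQ} and the embedding $\mathcal B^{-\alpha+2/p}_{p,p}\hookrightarrow\C^{-\alpha}$ for $p$ large. (Note that the lemma concerns only $Z^{:n:}_{-\infty,\cdot}$ and $(Z^N_{-\infty,\cdot})^{:n:}$; the binomial recombination with $S(t)Z_{-\infty,0}$ that worries you in your last paragraph is the content of Lemma \ref{ZtN-ZsN0}, not of this statement.) The one place where you genuinely diverge is the mechanism for uniformity in $N$, and it is exactly the step you flag as "the one inequality to prove carefully": the pointwise domination $0\le C^N_t(y,y')\le C_t(y,y')+O(1)$ uniformly in $N$. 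This is not a routine consequence of "the Fourier multiplier $\mathbf 1_{|m|\le N}$ is a contraction": $P_N$ here is a sharp disc cutoff in dimension $2$, its kernel is an oscillating Dirichlet-type kernel with slowly decaying tails, and neither the nonnegativity of $C^N_t$ nor the pointwise bound by $C_t+O(1)$ follows from positivity or monotonicity of the truncation; establishing it would require a separate oscillatory-sum estimate on the tail $\sum_{|m|>N}e_m(z)/(2I_m)$. The paper sidesteps this entirely by never leaving Fourier variables: testing against $\eta_j(x-\cdot)$ and applying Plancherel turns $\E|\Delta_j(Z^N_{-\infty,t})^{:n:}(x)|^2$ into the manifestly nonnegative sum $\sum_{m\in\mathcal A_{2^j}}K^0\star^n_{\le N}K^0(m)$ of \eqref{KAPPAzn1}, where the truncation $|m_i|\le N$ is a restriction of a sum of positive terms and is therefore trivially dominated by the full convolution; Lemma \ref{KENEL-ES} then gives the $N$-uniform bound $\lesssim(1+|m|^2)^{-(1-\epsilon)}$ directly, and the time increments are handled by interpolating a factor $|s-t|^{n\gamma}$ against $K^\gamma\star^n_{\le N}K^\gamma$ rather than by differentiating the heat kernel. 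So your route is workable in principle but hinges on a kernel inequality that is materially harder than you suggest, whereas the paper's Fourier-side bookkeeping makes the uniformity in $N$ immediate; I would either prove your covariance domination honestly (including that $(C^N_t)^n$ for odd $n$ must be bounded in absolute value, since $C^N_t$ need not be nonnegative) or switch to the frequency-space formulation.
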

\begin{proof}
\eqref{Z^nt-lp}  has been obtained in \cite[Theorem 2.1]{TW20182} and we only prove \eqref{ZN^nt-lp}.
By \eqref{LPBs2} we set $\phi(\cdot)=\eta_j(x-\cdot)$ in \eqref{HEAT-MILD-s-23} and obtain that
\[
 \Delta_{j}Z^N_{-\infty,t}(x)=Z^N_{-\infty,t}(\eta_j(x-\cdot)),~~x\in\T,t\in[0,T],~~j\geq-1.
\]
Using the results in  the proofs of {\cite[Theorem 2.1, Proposition 2.3]{TW20182}}, which are based on \eqref{iTO-EQ} and the semigroup property of $H(t,x)$ defined in \eqref{ker-perid}, we obtain that for $x_1,x_2\in\T$ and $s,t\in[0,T]$
\begin{equation}\label{KAPPAzn1}
\begin{aligned}
\E\Delta_{j}(Z^N_{-\infty,s})^{:n:}&(x_1)\Delta_{j}(Z^N_{-\infty,t})^{:n:}(x_2)\backsimeq\\
 & n! \sum_{\substack{m_1\in\mathcal A_{2^{j}},\\ m_1\in\mathbb Z^2}} \sum_{\substack{|m_i|\leq N,\\ m_i\in\mathbb Z^2,i=2,\ldots,n}} \prod^n_{i=1}
  \frac {e^{-I_{m_i-m_{i-1}}|s-t|}}{2I_{m_i-m_{i-1}}}  e_{m_1}(x_1-x_2),
\end{aligned}
\end{equation}
with  the convention that $m_0 = 0$ for $I_m=1+4\pi^2|m|^2$, $e_m=e^{\iota2\pi m\cdot}$,  $m\in\Z$. Let $K^\gamma(m)=\frac 1 {(1+|m|^2)^{1-\gamma}}$ for $\gamma\in[0,1)$. If we choose $x_1=x_2=x$ and  $s=t$, then we get an estimate of the form
\[
   \E\left.|\Delta_{j}(Z^N_{-\infty,t})^{:n:}(x)\right.|^2\lesssim
   \sum_{m\in\mathcal A_{2^{j}},m\in\mathbb Z^2} K^0 \star^{n}_{\leq N} K^0 (m),
\]
with $K^\gamma \star^{n}_{\leq N} K^\gamma$,$\gamma\in[0,1)$ defined in \eqref{<Nn}, while for $s\neq t$ and every $\gamma\in(0,1)$
\[
   \E\left.|\Delta_{j}(Z^N_{-\infty,s})^{:n:}(x)
   -\Delta_{j}(Z^N_{-\infty,t})^{:n:}(x)\right.|^2
   \lesssim  |s-t|^{n\gamma}
   \sum_{m\in\mathcal A_{2^{j}},m\in\mathbb Z^2} K^\gamma \star^{n}_{\leq N} K^\gamma (m).
\]
 Then using the estimates in Lemma  \ref{KENEL-ES} we have for  any $\lambda>0$ and $\gamma\in(0,\frac 1 n)$
\begin{equation*}\label{5THZN-ES}\begin{aligned}
   \E\big|\Delta_{j}(Z^N_{-\infty,t})^{:n:}(x)\big|^2\lesssim&
   \sum_{m\in\mathcal A_{2^{j}},m\in\mathbb Z^2} \frac{1}{(1 + |m|^2)^{1-\lambda}},\\
\E \left|\Delta_{j}(Z^N_{-\infty,s})^{:n:}(x)-\Delta_{j}(Z^N_{-\infty,t})^{:n:}(x)\right|^2
 \lesssim&
   \sum_{m\in\mathcal A_{2^{j}},m\in\mathbb Z^2} \frac{ |s-t|^{n\gamma}}{(1 + |m|^2)^{1-n\gamma}}
\end{aligned}
\end{equation*}
 uniformly for  $x\in\T$, $s,t\in[0,T]$, $j\geq -1$, $N\geq 1$. Considering that $|m|\lesssim 2^{j}$ for $m\in\mathcal A_{2^j}$, we further have estimates  for the above $\gamma$ and $\lambda>\frac {n\gamma}{2}$
 \begin{equation*}\label{5THZN-ES}\begin{aligned}
   \E\big|\Delta_{j}(Z^N_{-\infty,t})^{:n:}(x)\big|^2\lesssim&
   2^{2j\lambda},\\
\E \left|\Delta_{j}(Z^N_{-\infty,s})^{:n:}(x)-\Delta_{j}(Z^N_{-\infty,t})^{:n:}(x)\right|^2
 \lesssim&  |s-t|^{n\gamma}2^{2j n\gamma }  \lesssim  |s-t|^{n\gamma}2^{2j(\lambda+ \frac {n\gamma}{2}) }.
\end{aligned}
\end{equation*}
Let $p\geq 2$, then by \eqref{iTO-iEQ} and the above estimates we have that for  $0<\gamma<\frac 1 n $ and $\lambda>\frac{n\gamma}{2}$,
\begin{equation}\label{time-reg-Z}
\E \left|\Delta_{j}(Z^N_{-\infty,t})^{:n:}(x)\right|^p
\lesssim 2^{j \lambda p},~
\end{equation}
\begin{equation}\label{time-reg-Z2}
\E \left|\Delta_{j}(Z^N_{-\infty,s})^{:n:}(x)-\Delta_{j}(Z^N_{-\infty,t})^{:n:}(x)\right|^p
\lesssim |s-t|^{\frac{n\gamma p}{2}} 2^{j(\lambda+\frac{n\gamma}{2})p},
\end{equation}
where the constants we omit are independent of $N$. Hence \eqref{Z-exist-1} holds by Lemma \ref{exist-Z-cont} and choosing  $\nu=\frac{n\gamma}{2}$ and $\alpha=-\lambda$ for $\lambda>\frac{n\gamma}{2}$,  and the embedding $\mathcal B_{p,p}^{-\alpha+\frac{2}{p}}\hookrightarrow\C^{-\alpha}$ for $\alpha>\frac{2}{p}$. Then we have modifications still denoted by
 ${Z}_{-\infty,\cdot}^{:n:}$,$({Z}^N_{-\infty,\cdot})^{:n:}$ in $C([0,T];\mathcal C^{-\alpha})$  for any $\alpha>{n\gamma}+\frac{2}{p}$.
Moreover, we conclude that ${Z}_{-\infty,\cdot}^{:n:}$,$({Z}^N_{-\infty,\cdot})^{:n:}\in C([0,T];\mathcal C^{-\alpha})$, $\mathbb P$-a.s. for any $\alpha>0$ since by the arbitrariness of $\gamma,p$  we can choose $\gamma$ small enough and $p$ sufficiently large. By \eqref{Lp-Z-EQ}, \eqref{time-reg-Z}, \eqref{time-reg-Z2} and  Cauchy-Schwarz inequality, \eqref{ZN^nt-lp} holds for all $p>1$.

\end{proof}

\begin{lemma}\label{ZtN-ZsN0}
Let $\alpha\in (0,1)$, $n=1,2,3$  and $p>1$. Then for every $\alpha'>0$
\begin{equation}\label{est-ZN-HOLD}
     \sup_{N\geq 1}\E \sup_{0\leq t\leq T}t^{(n-1){\alpha}'p}\|(Z^N_{t})^{:n:}\|_{{-\alpha}}^p<\infty,
 \end{equation}
 \begin{equation}\label{est-Z-HOLD}
     \E \sup_{0\leq t\leq T}t^{(n-1){\alpha}'p}\|Z_{t}^{:n:}\|_{{-\alpha}}^p<\infty.
 \end{equation}
\end{lemma}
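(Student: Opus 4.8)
The plan is to run, for the zero-initial-value process $Z^N_t$, the same Kolmogorov-type argument that underlies Lemma \ref{modefi-Z}, after pinning down the one place where a singularity as $t\downarrow 0$ can occur. The obvious alternative—inserting the Schauder bound for $S(t)Z^N_{-\infty,0}$ into \eqref{ZN1230t-wick} and applying the multiplicative inequalities of Lemma \ref{Multi-ineq}—only reaches the weaker range $\alpha'>\alpha/2$, because the resonant product of $S(t)Z^N_{-\infty,0}\in\C^{\alpha+}$ with a negative-regularity Wick power forces a factor $t^{-(\alpha+\varepsilon)/2}$; so to get every $\alpha'>0$ I would argue at the level of Wiener chaos. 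The root of the problem is that the time-independent constant $\mathfrak R^{N}=\sum_{|m|\le N}\frac{1}{2I_m}$ used in \eqref{ZN1230t-wick} is \emph{not} the true variance $c^N(t):=\E|Z^N_t(x)|^2=\sum_{|m|\le N}\frac{1-e^{-2I_mt}}{2I_m}$ of $Z^N_t$; the discrepancy is
\[
d^N(t):=\mathfrak R^{N}-c^N(t)=\sum_{|m|\le N}\frac{e^{-2I_mt}}{2I_m},
\]
which equals $\mathfrak R^{N}$ at $t=0$ but, for $t\in(0,T]$, satisfies $d^N(t)\le d(t):=\sum_{m\in\Z}\frac{e^{-2I_mt}}{2I_m}\lesssim 1+|\log t|$ uniformly in $N$.

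First I would record the algebraic decomposition. A direct computation from \eqref{ZN1230t-wick} and \eqref{Zst2-3-APP}, using $S(t)Z^N_{-\infty,0}=Z^N_{-\infty,t}-Z^N_t$, gives $(Z^N_t)^{:2:}=(Z^N_t)^2-\mathfrak R^{N}$ and $(Z^N_t)^{:3:}=(Z^N_t)^3-3\mathfrak R^{N}Z^N_t$, i.e. $(Z^N_t)^{:n:}$ is the naive Wick power of $Z^N_t$ relative to $\mathfrak R^{N}$. Subtracting instead the \emph{correct} counterterm $c^N(t)$, i.e. passing to the genuine $n$-th Wick power $\mathcal Z^{N,n}_t$ of $Z^N_t$ (with $\mathcal Z^{N,0}_t=1$, $\mathcal Z^{N,1}_t=Z^N_t$), the change-of-variance formula for Hermite polynomials yields
\[
(Z^N_t)^{:n:}=\sum_{k\ge 0}\binom{n}{2k}(2k-1)!!\,(-d^N(t))^{k}\,\mathcal Z^{N,n-2k}_t,
\]
so that for the only exponents needed, $(Z^N_t)^{:2:}=\mathcal Z^{N,2}_t-d^N(t)$ and $(Z^N_t)^{:3:}=\mathcal Z^{N,3}_t-3d^N(t)Z^N_t$.

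Next I would bound the regular part $\mathcal Z^{N,n}_t$ uniformly, with no time weight. It is a genuine $n$-th order Wiener chaos over the noise restricted to $(0,\infty)\times\T$, whose covariance is exactly the chained kernel of \eqref{KAPPAzn1} but with each time integral over $[0,s\wedge t]$ instead of $(-\infty,s\wedge t]$; since
\[
\int_0^{s\wedge t}e^{-I(s-r)}e^{-I(t-r)}\,\d r=\frac{e^{-I|s-t|}-e^{-I(s+t)}}{2I}\le\frac{e^{-I|s-t|}}{2I},
\]
every rung is dominated, on the diagonal and in the time increments, by the corresponding rung in the stationary case. Hence block estimates of the form \eqref{time-reg-Z}--\eqref{time-reg-Z2} (possibly with a slightly smaller Hölder exponent, which is immaterial) hold for $\Delta_j\mathcal Z^{N,n}_t$ uniformly in $N$ and in $t\in[0,T]$, and, exactly as in the proof of Lemma \ref{modefi-Z} (via Lemma \ref{KENEL-ES}, Lemma \ref{exist-Z-cont}, and the embedding $\mathcal B_{p,p}^{-\alpha+\frac2p}\hookrightarrow\C^{-\alpha}$), one obtains $\mathcal Z^{N,n}_\cdot\in C([0,T];\C^{-\alpha})$ with $\mathcal Z^{N,n}_0=0$ for $n\ge 1$ and $\sup_{N\ge 1}\E\sup_{0\le t\le T}\|\mathcal Z^{N,n}_t\|_{-\alpha}^{p}<\infty$ for all $\alpha>0,\ p>1$.

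Finally I would assemble. For $n=1$ there is no correction and \eqref{est-ZN-HOLD} (weight $t^{0}$) is the case just proved. For $n\ge2$, since only $k=1$ occurs,
\[
t^{(n-1)\alpha'}\|(Z^N_t)^{:n:}\|_{-\alpha}\le T^{(n-1)\alpha'}\|\mathcal Z^{N,n}_t\|_{-\alpha}+C\big(t^{(n-1)\alpha'}d^N(t)\big)\,\|\mathcal Z^{N,n-2}_t\|_{-\alpha},
\]
and, because $(n-1)\alpha'>0$ and $d^N(t)\le d(t)\lesssim 1+|\log t|$, the scalar factor obeys $\sup_{0<t\le T}t^{(n-1)\alpha'}d^N(t)<\infty$ uniformly in $N$ (the weight also annihilates the value $\mathfrak R^{N}$ at $t=0$). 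Taking $\E\sup_t(\cdot)^p$ and using the previous step for $\mathcal Z^{N,n}_t$, together with $\|\mathcal Z^{N,0}_t\|_{-\alpha}=\|1\|_{-\alpha}\lesssim 1$ when $n=2$ and the $n=1$ bound on $\|\mathcal Z^{N,1}_t\|_{-\alpha}=\|Z^N_t\|_{-\alpha}$ when $n=3$, gives \eqref{est-ZN-HOLD}; note the weight is never transported inside the process, so no extra time-regularity estimate for the singular scalar $d^N(t)$ is needed. The bound \eqref{est-Z-HOLD} is then the same computation with $d^N$ replaced by $d$, or may be deduced from \eqref{est-ZN-HOLD} by letting $N\to\infty$ (the weighted processes $t^{(n-1)\alpha'}(Z^N_t)^{:n:}$ converge in $L^p(\Omega;C([0,T];\C^{-\alpha}))$ to $t^{(n-1)\alpha'}Z^{:n:}_t$, the weight removing the blow-up at $t=0$, so Fatou applies). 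The step I expect to be the main obstacle is the first one: correctly identifying the chaos decomposition of $(Z^N_t)^{:n:}$ relative to the ``wrong'' renormalization constant and verifying the uniform-in-$N$ logarithmic bound $d^N(t)\lesssim 1+|\log t|$; everything after that is a transcription of the estimates already carried out for $Z^{:n:}_{-\infty,\cdot}$ in Lemma \ref{modefi-Z}.
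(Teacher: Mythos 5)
Your alternative route is workable, but the premise on which you reject the ``obvious alternative'' is false --- and that alternative is precisely the paper's proof. You claim that feeding the Schauder bound for $S(t)Z^N_{-\infty,0}$ into \eqref{ZN1230t-wick} and applying Lemma \ref{Multi-ineq} costs $t^{-(\alpha+\varepsilon)/2}$ per factor and so only reaches $\alpha'>\alpha/2$. This ties the regularity of $S(t)Z^N_{-\infty,0}$ to the $\alpha$ of the target norm, which is not forced: by Lemma \ref{modefi-Z} the stationary objects $Z^N_{-\infty,0}$ and $(Z^N_{-\infty,t})^{:n-k:}$ lie in $\C^{-\bar\alpha}$ for \emph{every} $\bar\alpha>0$, with moments uniform in $N$. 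Fixing $0<\bar\alpha<\alpha\wedge\alpha'$ and $\epsilon$ small, Lemma \ref{Heat-Smooth1} gives $t^{\bar\alpha+\epsilon/2}\|S(t)Z^N_{-\infty,0}\|_{\bar\alpha+\epsilon}\lesssim\|Z^N_{-\infty,0}\|_{-\bar\alpha}$, the product with $(Z^N_{-\infty,t})^{:n-k:}\in\C^{-\bar\alpha}$ is legitimate by Lemma \ref{Multi-ineq}(ii) since $\bar\alpha+\epsilon>\bar\alpha$, and one embeds $\C^{-\bar\alpha}\hookrightarrow\C^{-\alpha}$. The worst term ($k=n-1$) then costs $t^{-(n-1)(\bar\alpha+\epsilon/2)}$, absorbed by the weight $t^{(n-1)\alpha'}$ because $\bar\alpha+\epsilon/2<\alpha'$; Cauchy--Schwarz and \eqref{ZN^nt-lp} finish. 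That two-line argument already yields every $\alpha'>0$.

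Your own construction is correct as far as it goes and is genuinely different: the algebra $(Z^N_t)^{:2:}=(Z^N_t)^2-\mathfrak R^N=\mathcal Z^{N,2}_t-d^N(t)$ and $(Z^N_t)^{:3:}=\mathcal Z^{N,3}_t-3d^N(t)Z^N_t$ checks out, $d^N(t)\le\sum_{m\in\Z}e^{-2I_mt}/(2I_m)\lesssim 1+|\log t|$ uniformly in $N$, and the domination of each Fourier coefficient of the non-stationary covariance by its stationary counterpart lets the kernel bounds of Lemma \ref{KENEL-ES} and the Kolmogorov argument of Lemma \ref{exist-Z-cont} carry over (with a smaller but still positive H\"older exponent in the increment bound, which indeed is harmless). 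What this buys is sharper information --- the entire $t\downarrow 0$ singularity is isolated in the explicit logarithmic scalar $d^N(t)$, so any weight $t^\delta$, $\delta>0$, would suffice --- but at the cost of redoing all the chaos covariance computations for the non-stationary processes $\mathcal Z^{N,n}$, which is exactly the work the paper's decomposition \eqref{ZN1230t-wick} is designed to avoid. If you keep your route, the increment estimates for $\mathcal Z^{N,n}$ near $t=0$ and the passage through Lemma \ref{exist-Z-cont} should be written out rather than asserted as a ``transcription'', since that is where the non-stationarity actually enters.
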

\begin{proof}
 Let   $0<\bar\alpha<\alpha\wedge \alpha'$ be fixed  and   $\epsilon>0$ small enough. By Lemmas  \ref{modefi-Z}, \ref{Heat-Smooth1} and \ref{Multi-ineq} we have
 $(Z_{-\infty,t}^N)^{:n:}\in\C^{-\bar\alpha}$, $S(t)Z_{-\infty,t}^N\in\C^{\bar \alpha+\epsilon}$  for every $t\in[0,T]$ and
\[ t^{{\bar\alpha+\frac \epsilon {2}}}\|S(t)Z_{-\infty,0}^N\|_{\bar \alpha+\epsilon}\lesssim  \|Z_{-\infty,0}^N\|_{-\bar \alpha},
\]
\[
 t^{{(\bar\alpha+\frac \epsilon {2})(n-1)}} \|(S(t)Z_{-\infty,0}^N)^n\|_{-\bar\alpha}\lesssim  \|Z_{-\infty,0}^N\|_{-\bar \alpha}^n.
\]
Then using Lemmas \ref{Multi-ineq} and \ref{V-SUB-H} in \eqref{ZN1230t-wick} we have
\begin{equation*}
    \|(Z_{t}^N)^{:n:}\|_{-\alpha}\lesssim \sum_{k=0}^{n-1}
    \|S(t)Z_{-\infty,0}^N\|_{\bar\alpha+\epsilon}^k\|(Z_{-\infty,t}^N)^{:n-k:}\|_{-\bar\alpha}
     +\|(S(t)Z_{-\infty,0}^N)^n\|_{-\bar\alpha},
\end{equation*}
and \eqref{est-ZN-HOLD} follows  by Lemma \ref{modefi-Z} and Cauchy-Schwarz's inequality.

 \eqref{est-Z-HOLD} can be similarly obtained and we omit the proof. See also \cite[Proposition 2.2]{TW20182} for the details.
\end{proof}

Using the moment bounds obtained in Lemma \ref{ZtN-ZsN0}, we continue to prove the convergence  rate for linear equation \eqref{HEAT-s}  with initial value  zero  in Lemma \ref{Zt-ZtN}. Similar convergence results can, e.g., be found in \cite[Propositions 2.2,2.3]{TW20182},  \cite[Proposition 7.4]{TW2018}. The main difference is that we obtain  the convergence rates.

\begin{lemma}\label{Zt-ZtN}
Let $\alpha\in(0,1)$,   $n=1,2,3$ and $p>1$. Then
\begin{equation}\label{Z-ZNLp-est}
 ~~~~ \E \sup_{0\leq t\leq T}
    \|Z^{:n:}_{-\infty,t}-(Z^N_{-\infty,t})^{:n:}\|_{ {-\alpha}}^p
   \lesssim (1+N^2)^{-\frac{p \alpha^-}{2}},
\end{equation}
and for every $\beta>\alpha$
\begin{equation}\label{ZTN-ZN-n}
  \E \sup_{0\leq t\leq T}  t^{\frac{{(n-1)}(\alpha+\beta)p}{2}}
    \|Z^{:n:}_{t}-(Z^N_{t})^{:n:}\|_{ {-\alpha}}^p \lesssim (1+N^2)^{-\frac{p \alpha^-}{2}},
\end{equation}
where  $\alpha^-$  denotes $\alpha-\delta$ for every $\delta>0$.
\end{lemma}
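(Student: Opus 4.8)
The plan is to prove \eqref{Z-ZNLp-est} first, by the same Kolmogorov-type scheme used for Lemma \ref{modefi-Z}, and then to derive \eqref{ZTN-ZN-n} from the identities \eqref{Z230twick}--\eqref{ZN1230t-wick}. Set $D^n_t:=Z^{:n:}_{-\infty,t}-(Z^N_{-\infty,t})^{:n:}$. Since $Z^{:n:}_{-\infty,t}$ is the $n$-fold multiple stochastic integral \eqref{HEAT-MILD-s-23} with kernel built from $H$ while $(Z^N_{-\infty,t})^{:n:}$ is the same integral with $H$ replaced by $H_N=P_NH$ (this is precisely what the renormalisation constants $\mathfrak R^N$ achieve), the difference $D^n_t$ again lies in the $n$-th homogeneous Wiener chaos. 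By the hypercontractivity estimate \eqref{iTO-iEQ} it therefore suffices to control the second moments of $\Delta_\kappa D^n_t(x)$ and of its time increments, and then to invoke the Kolmogorov-type Lemma \ref{exist-Z-cont} together with the embedding $\mathcal B^{-\alpha+2/p}_{p,p}\hookrightarrow\C^{-\alpha}$ for $p$ large, exactly as at the end of the proof of Lemma \ref{modefi-Z}. The first step is to rerun the covariance computation behind \eqref{KAPPAzn1} with one of the two kernels untruncated and the other truncated at level $N$, together with the fully untruncated and fully truncated diagonal versions; after subtraction, the product of $n$ unrestricted Fourier factors minus the product of $n$ factors carrying the constraints $|m_i|\le N$ equals the product of the unrestricted factors times $\mathbf 1_{\{\exists i:\,|m_i|>N\}}$, so that, with $\eta_\kappa$ as in \eqref{LPBs2},
\begin{equation*}
\E\big|\Delta_\kappa D^n_t(x)\big|^2\;\lesssim\;\sum_{\substack{m_1+\dots+m_n\in\mathcal A_{2^\kappa}\\ \exists\,i:\ |m_i|>N}}\ \prod_{i=1}^n\frac{1}{2I_{m_i}}
\end{equation*}
uniformly over $x\in\T$, $t\in[0,T]$, $N\ge1$, $\kappa\ge-1$, and a comparable bound holds for the increment $\Delta_\kappa(D^n_t-D^n_s)(x)$ with an additional factor $|s-t|^{n\gamma}$ and the $\frac1{I_{m_i}}$ replaced by $\frac1{I_{m_i}^{1-\gamma}}$, obtained from $1-e^{-|s-t|I_m}\le(|s-t|I_m)^\gamma$ exactly as in the proof of Lemma \ref{modefi-Z}.

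To extract the rate in $N$, by symmetry we may assume the excess frequency is $m_1$, and on $\{|m_1|>N\}$ we bound $\frac1{I_{m_1}}\le(1+4\pi^2N^2)^{-\theta}I_{m_1}^{-(1-\theta)}$ for any $\theta\in(0,1)$, so the single factor $K^0(m_1)$ becomes $(1+N^2)^{-\theta}K^\theta(m_1)$ up to a constant while the remaining $n-1$ factors stay $K^0$ (for the increment one likewise turns one $K^\gamma$ into $K^{\gamma+\theta}$). Applying the kernel estimates of Lemma \ref{KENEL-ES} --- in which only a single kernel has positive index, so the sum over $\mathcal A_{2^\kappa}$ grows only like $2^{2\kappa\theta}$ times a power of $\kappa$ --- one obtains, for every $\lambda>0$ and $\gamma\in(0,1/n)$,
\begin{equation*}
\E\big|\Delta_\kappa D^n_t(x)\big|^2\lesssim(1+N^2)^{-\theta}2^{2\kappa(\theta+\lambda)},\qquad
\E\big|\Delta_\kappa(D^n_t-D^n_s)(x)\big|^2\lesssim(1+N^2)^{-\theta}|s-t|^{n\gamma}2^{2\kappa(\theta+\lambda+\frac{n\gamma}{2})},
\end{equation*}
with constants independent of $N$. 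Feeding these into Lemma \ref{exist-Z-cont} with $\nu=\frac{n\gamma}{2}$ and regularity index $-(\theta+\lambda)$ yields a continuous-in-time modification of $D^n$ with $\E\sup_{[0,T]}\|D^n_t\|_{\mathcal B^{\alpha'}_{p,p}}^p\lesssim(1+N^2)^{-\theta p/2}$ for every $\alpha'<-(\theta+\lambda)-\frac{n\gamma}{2}$; embedding $\mathcal B^{\alpha'}_{p,p}\hookrightarrow\C^{\alpha'-2/p}$ and then taking $\theta$ arbitrarily close to $\alpha$ while $\lambda,\gamma,1/p$ are small enough that $\alpha'-2/p\ge-\alpha$ can be met, we get $\E\sup_{[0,T]}\|D^n_t\|_{-\alpha}^p\lesssim(1+N^2)^{-\theta p/2}=(1+N^2)^{-p\alpha^-/2}$, first for $p$ large and then for all $p>1$ by Jensen's inequality. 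After identifying the modification with $Z^{:n:}_{-\infty,\cdot}-(Z^N_{-\infty,\cdot})^{:n:}$ as usual, this is \eqref{Z-ZNLp-est}.

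For \eqref{ZTN-ZN-n}, subtract \eqref{ZN1230t-wick} from \eqref{Z230twick} using $S(t)Z^N_{-\infty,0}=P_NS(t)Z_{-\infty,0}$ and telescope each of the finitely many summands. With $A:=S(t)Z_{-\infty,0}$, $\tilde A:=P_NA$, $B:=Z^{:n-k:}_{-\infty,t}$, $\tilde B:=(Z^N_{-\infty,t})^{:n-k:}$, the $k$-th summand of the difference is $(A^k-\tilde A^k)B+\tilde A^k(B-\tilde B)$ with $A^k-\tilde A^k=(A-\tilde A)\sum_{l=0}^{k-1}A^l\tilde A^{k-1-l}$ and $A-\tilde A=({\rm I}-P_N)S(t)Z_{-\infty,0}$. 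Each factor $A$ or $\tilde A$ belongs to $\C^{\sigma}$ for any $\sigma$, with $\|A\|_{\C^\sigma}+\|\tilde A\|_{\C^\sigma}\lesssim t^{-(\sigma+\bar\alpha)/2}\|Z_{-\infty,0}\|_{-\bar\alpha}$ for $\bar\alpha>0$ small, by the Schauder estimate \eqref{Heat-Smooth} and \eqref{est-PN}; the factor $A-\tilde A$ enjoys the additional gain $(\log N)^2N^{-\lambda}$ by \eqref{est-P-PN} followed by \eqref{Heat-Smooth}, with $\lambda$ taken arbitrarily close to $\alpha$; the factors $B-\tilde B$ are controlled in $\C^{-\alpha}$ with the rate $(1+N^2)^{-\alpha^-/2}$ by \eqref{Z-ZNLp-est}; and the remaining $B$'s have finite moments in $\C^{-\bar\alpha}$ by Lemma \ref{modefi-Z}. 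Estimating the resulting products in $\C^{-\alpha}$ via the multiplicative inequalities of Lemma \ref{Multi-ineq} and the embedding of Lemma \ref{V-SUB-H}, a summand carrying heat-smoothed initial-data factors accumulates a singularity in $t$ of order at most $t^{-(n-1)\sigma/2}$ with $\sigma$ arbitrarily close to $\alpha$ (together with a harmless extra $t^{-\lambda/2}$, $\lambda<\alpha$, in the terms carrying the $N^{-\lambda}$-gain); because $\beta>\alpha$ this is $\lesssim t^{-(n-1)(\alpha+\beta)/2}$ on $[0,T]$, so the prefactor $t^{(n-1)(\alpha+\beta)p/2}$ in \eqref{ZTN-ZN-n} absorbs it. Taking $\E\sup_{[0,T]}$ and splitting the moments with Hölder's inequality --- using the uniform bounds of Lemma \ref{modefi-Z} and \eqref{Z-ZNLp-est} --- gives \eqref{ZTN-ZN-n}; for $n=1$ the only surviving contribution is $({\rm I}-P_N)\big(Z_{-\infty,t}-S(t)Z_{-\infty,0}\big)$, handled directly by \eqref{Z-ZNLp-est} and by \eqref{est-P-PN} with Lemma \ref{V-SUB-H}, with no $t$-weight.

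The main obstacle is the first covariance computation, in particular for the time increment $D^n_t-D^n_s$: one must organise the difference of the two $n$-fold products so that \emph{simultaneously} one frequency is forced above $N$ and a factor $|s-t|^{n\gamma}$ is produced, which requires a telescoping argument in the chaos variables slightly more involved than the one behind \eqref{KAPPAzn1}; one must also check that every constant produced by Lemma \ref{KENEL-ES} is genuinely uniform in $N$, which is automatic here since we only ever bound truncated sums by the corresponding untruncated ones. The remaining bookkeeping --- the tracking of powers of $t$ and the distribution of $L^p$-moments in the proof of \eqref{ZTN-ZN-n} --- is routine.
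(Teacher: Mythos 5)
Your argument is correct and follows essentially the same route as the paper: for \eqref{Z-ZNLp-est} the difference of the full and truncated Fourier sums is controlled by the $\star^{n}_{>N}$ kernel bounds of Lemma \ref{KENEL-ES}, then hypercontractivity \eqref{iTO-iEQ} and the Kolmogorov/Besov-embedding step as in Lemma \ref{modefi-Z}; for \eqref{ZTN-ZN-n} the binomial telescoping of \eqref{Z230twick}--\eqref{ZN1230t-wick} combined with the multiplicative and Schauder estimates. The only (immaterial) deviation is that for the initial-data factors you apply \eqref{est-P-PN} to $({\rm I}-P_N)S(t)Z_{-\infty,0}$, paying a small extra $t$-singularity that the margin $\beta>\alpha$ absorbs, whereas the paper writes this term as $S(t)(Z_{-\infty,0}-Z^N_{-\infty,0})$ and reuses \eqref{Z-ZNLp-est} at $t=0$, keeping the weight exactly $t^{(n-1)(\alpha+\beta)/2}$.
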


\begin{proof}

Let $N\geq 1$,  similar as  in \eqref{KAPPAzn1} we have that for $x_1,x_2\in\T$ and $s,t\in[0,T]$
\begin{equation}\label{KAPPAz}
\begin{aligned}
\E\Delta_{j}Z_{-\infty,s}^{:n:}&(x_1)\Delta_{j}Z_{-\infty,t}^{:n:}(x_2)\backsimeq\\
 & n! \sum_{m_1\in\mathcal A_{2^{j}},m_1\in\mathbb Z^2} \sum_{ m_i\in\Z, i=2,\ldots,n} \prod^n_{i=1}
  \frac {e^{-I_{m_i-m_{i-1}}|s-t|}}{2I_{m_i-m_{i-1}}}  e_{m_1}(x_1-x_2),
\end{aligned}
\end{equation}
\begin{equation}\label{notd2}
\begin{aligned}
\E\Delta_{j}Z_{-\infty,s}^{:n:}&(x_1)\Delta_{j}(Z^N_{-\infty,t})^{:n:}(x_2)\backsimeq\\
 & n! \sum_{\substack{m_1\in\mathcal A_{2^{j}},\\m_1\in\Z}} \sum_{\substack{|m_i|\leq N,\\m_i\in \Z,i=2,\ldots,n}} \prod^n_{i=1}
  \frac {e^{-I_{m_i-m_{i-1}}|s-t|}}{2I_{m_i-m_{i-1}}}  e_{m_1}(x_1-x_2),~~~~~~
\end{aligned}
\end{equation}
with $m_0:= 0$,  $I_m=1+4\pi^2|m|^2$, $e_m=e^{\iota2\pi m\cdot}$ for $m\in\Z$. Then by letting  $x_1=x_2=x$ and $s=t$ in the above two estimates and in \eqref{KAPPAzn1}  and by equality $(a-b)^2=a^2+b^2-2ab$ we have
\begin{equation*}
\begin{aligned}
   \E\big|\Delta_{j}Z_{-\infty,t}^{:n:}(x)
  & -\Delta_{j}(Z^N_{-\infty,t})^{:n:}(x)\big|^2
   \backsimeq \frac {n!}{2}
   \sum_{m_1\in\mathcal A_{2^{j}},m_1\in\Z}
    \Big\{\sum_{m_i\in\Z ,i=2,\ldots,n} \prod^n_{i=1}
  \frac {1}{1+|m_i-m_{i-1}|^2} \\
  &-  \sum_{|m_i|\leq N,m_i\in\Z ,i=2,\ldots,n} \prod^n_{i=1}
  \frac {1}{1+|m_i-m_{i-1}|^2}  \Big\}
   \lesssim   \sum_{m\in\mathcal A_{2^{j}},m\in\Z} K^0 \star^n_{>N} K^0 (m),
\end{aligned}
\end{equation*}
with $K^0(m):=\frac 1 {1+|m|^2}$ and $K^0 \star^{n}_{> N} K^0$ defined in \eqref{>Nn}, and  by Lemma  \ref{KENEL-ES} we have for any positive $\lambda$ satisfying $\lambda<1-\epsilon$ that
\[\sum_{m\in\mathcal A_{2^{j}},m\in\Z} K^0 \star^n_{>N} K^0 (m)\lesssim
 (1+N^2)^{-\lambda}\sum_{m\in\mathcal A_{2^{j}},m\in\Z}{(1+|m|^2)^{\epsilon+\lambda-1}}
 \lesssim
 (1+N^2)^{-\lambda}2^{2j{(\epsilon+\lambda)}}.
\]
Then by \eqref{iTO-iEQ}, we have for every $p\geq 2$, $\lambda,\epsilon>0$ such that $\lambda+\epsilon<1$,
\begin{equation*}
\begin{aligned}
\E\left| \Delta_j Z_{-\infty,t}^{:n:}(x)- \Delta_j (Z^N_{-\infty,t})^{:n:}(x)\right|^p\lesssim 2^{j(\lambda+\epsilon) p}{(1+N^2)^{-\frac{\lambda p}{2}}},
\end{aligned}
\end{equation*}
uniformly for  $x\in\T$, $t\in[0,T]$, $j\geq -1$, $N\geq 1$.
Then for any  $\alpha\in (0,1)$, we  choose $p$ sufficiently large, $\epsilon$ sufficiently small  and $\lambda>0$ such that $\lambda+\epsilon<\alpha-2/p$ and thus we obtain
\begin{equation*}
\E \sup_{0\leq t\leq T}
    \|Z^{:n:}_{-\infty,t}-(Z^N_{-\infty,t})^{:n:}
         \|_{\mathcal B^{-\alpha+\frac 2 p}_{p,p}}^p
   \lesssim (1+N^2)^{-\frac{ \lambda  p}{2}}.
\end{equation*}
Finally, using the embedding $\mathcal B_{p,p}^{-\alpha+{2}/{p}}\hookrightarrow\C^{-\alpha}$ for any $\alpha>\frac{2}{p}$ and letting $\lambda$ close to $\alpha$ and $\epsilon$ close to 0, we have
\begin{equation*}
\E \sup_{0\leq t\leq T}
    \|Z^{:n:}_{-\infty,t}-(Z^N_{-\infty,t})^{:n:}\|_{-\alpha}^p
   \lesssim
 (1+N^2)^{-\frac{  p \alpha^-  }{2}}
\end{equation*}
for $p$  large enough. Then by the Cauchy-Schwarz inequality, \eqref{Z-ZNLp-est} holds for all $p>1$.

We continue to prove \eqref{ZTN-ZN-n}.  For every  $\beta>\alpha$, proceeding as in  Lemma \ref{ZtN-ZsN0}, we obtain $S(t)Z_{-\infty,0}^N\in\C^{ \beta}$ and
\[ t^{\frac{\alpha+\beta}{2}}\|S(t)Z_{-\infty,0}^N\|_{\beta}\lesssim  \|Z_{-\infty,0}^N\|_{-\alpha},
\]
\[
   t^{\frac{n(\alpha+\beta)}{2}}
 \|(S(t)Z_{-\infty,0})^n-(S(t)Z_{-\infty,0}^N)^n\|_{\beta}\lesssim
  \|Z_{-\infty,0}-Z_{-\infty,0}^N\|_{-\alpha}(\|Z_{-\infty,0}\|^{n-1}_{-\alpha}+\|Z_{-\infty,0}^N\|^{n-1}_{-\alpha}),
\]
\[
   t^{\frac{{(n-1)}(\alpha+\beta)}{2}}
 \|(S(t)Z_{-\infty,0})^n-(S(t)Z_{-\infty,0}^N)^n\|_{-\alpha}\lesssim
  \|Z_{-\infty,0}-Z_{-\infty,0}^N\|_{-\alpha}(\|Z_{-\infty,0}\|^{n-1}_{-\alpha}+\|Z_{-\infty,0}^N\|^{n-1}_{-\alpha}).
\]
Then using Lemma  \ref{Multi-ineq} in \eqref{ZN1230t-wick} and \eqref{Z230twick} we have
\begin{equation*}\label{ZN-Z-wick-0t}
\begin{aligned}
    \|Z^{:n:}_{t}-(Z^N_{t})^{:n:}\|_{-\alpha}\lesssim&\sum_{k=0}^{n-1}\big[
    \|(S(t)Z_{-\infty,0}^N)\|_{\beta}^k\cdot\|Z_{-\infty,t}^{:n-k:}-(Z_{-\infty,t}^N)^{:n-k:}\|_{-\alpha}\\
        &+ \|(S(t)Z_{-\infty,0})^k-(S(t)Z_{-\infty,0}^N)^k\|_{\beta} \cdot  \|Z_{-\infty,t}^{:n-k:}\|_{-\alpha}\big]\\
 &+ \|(S(t)Z_{-\infty,0})^n-(S(t)Z_{-\infty,0}^N)^n \|_{-\alpha},
\end{aligned}
\end{equation*}
and \eqref{ZTN-ZN-n} follows by the Cauchy-Schwarz inequality, \eqref{ZN^nt-lp} and \eqref{Z-ZNLp-est}.

\end{proof}

\bigskip
Now following the techniques in \cite{MW2010,RZZ2017}, we combine the initial value part with the Wick
powers. Let $X_0\in\C^{-\alpha},\alpha\in(0,1)$, we set $V_t := S(t)X_0$, $V^N_t := P_NV_t$ and
\[
 \bar Z_t:=Z_t+V_t ,~  \bar Z^N_t:=Z^N_t+V_t^N,
\]
\begin{equation}\label{ZN1230t-wick-ini}
 \bar Z_t^{:n:}:=\sum_{k=0}^n\binom{n}{k}  V_t^{n-k} Z_t^{:k:},~ (\bar Z_t^N)^{:n:}:=\sum_{k=0}^n\binom{n}{k}  (V_t^N)^{n-k} (Z_t^N)^{:k:}
\end{equation}
for  $n=1,2,3$ with $\bar Z_t^{:1:}=\bar Z_t$. Then we know that $\bar Z_t$, $\bar Z_t^N$ are  the solutions to \eqref{ini1-Eq} and \eqref{ini1-Eq_N} with initial values $X_0$ and $P_NX_0$, respectively. By Lemma \ref{Heat-Smooth1} we have $V\in C([0,T];\C^{-\alpha})$ and $V\in C([0,T];\C^{\beta})$ for every $\beta>-\alpha$ with the norm $\sup_{t\in  [0,T]}t^{\frac{\alpha+\beta}{2}}\|\cdot\|_{{\beta}}$. Moreover, together with \eqref{Heat-Smooth} we have
\begin{equation}  \label{V-ini}
\sup_{0\leq t\leq T}t^{\frac{\alpha+\beta}{2}}\|V_t\|_{{\beta}}\lesssim \|X_0\|_{-\alpha},~
\sup_{0\leq t\leq T}t^{\frac{\alpha+\beta+\kappa}{2}}\|V_t^N\|_{{\beta}}\lesssim \|X_0\|_{-\alpha},~
\end{equation}
for  $\beta\geq-\alpha$ and  $\kappa>0$. Then we extend the results in Lemmas \ref{modefi-Z}-\ref{Zt-ZtN} to the solution to stochastic heat equation \eqref{ini1-Eq} with   initial value $X_0$.

\begin{theorem}
\label{Z-INI}
Let $\alpha\in(0,1)$, $X_0\in\C^{-\alpha}$, $n=1,2,3$, $p>1$. Let $\bar Z^{:n:}$ and ${(\bar Z^N)}^{:n:}$ be defined  in \eqref{ZN1230t-wick-ini}. Then $({\bar Z^N})^{:n:}$ converges to ${\bar Z}^{:n:}$ in $L^p(C([0,T];\C^{-\alpha}))$  such that for every $\beta>\alpha$ and $\kappa>0$
\begin{equation}\label{est-ZN-HOLD-INI1}
     \sup_{N\geq 1}\E \sup_{0\leq t\leq T}t^{\frac{(\alpha+\beta){(n-1)}+n\kappa}{2} p}\|(\bar Z^N_{t})^{:n:}\|_{-\alpha}^p<\infty,
\end{equation}
\begin{equation}\label{est-ZN-HOLD-INI2}
     \E \sup_{0\leq t\leq T}t^{\frac{(\alpha+\beta){(n-1)}}{2} p}\|\bar Z_{t}^{:n:}\|_{-\alpha}^p<\infty,
 \end{equation}
 \begin{equation}\label{ZTN-ZN-n-ini}
  \E \sup_{0\leq t\leq T}  t^{\frac{(\alpha+\beta){(n-1)}+n\kappa}{2} p}
    \|\bar Z^{:n:}_{t}-(\bar Z^N_{t})^{:n:}\|_{ {-\alpha}}^p \lesssim \frac{(\log N)^{2p}}{N^{\kappa p}}+(1+N^2)^{-\frac{ p\alpha^-}{2}},
\end{equation}
where  $\alpha^-$  denotes $\alpha-\delta$ for every $\delta>0$.
\end{theorem}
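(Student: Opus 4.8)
The plan is to expand $\bar Z_t^{:n:}$ and $(\bar Z_t^N)^{:n:}$ through the binomial identities \eqref{ZN1230t-wick-ini} and to estimate each resulting summand separately, handling the initial‑value parts $V_t=S(t)X_0$, $V_t^N=P_NV_t$ by Schauder smoothing and the Wick factors by the moment bounds already established. Throughout I fix an auxiliary exponent $\beta>\alpha$ — this is precisely the range in which Lemma~\ref{Multi-ineq}(ii) allows one to multiply a power of $V_t$ (which, by \eqref{V-ini}, lies in $\C^\beta$ for every $t>0$) against a Wick factor in $\C^{-\alpha}$ and land back in $\C^{-\alpha}$ — together with an auxiliary exponent $\alpha'\in(0,(\alpha+\beta)/2]$ for Lemma~\ref{ZtN-ZsN0} and a $\beta'\in(\alpha,\beta]$ for Lemma~\ref{Zt-ZtN}. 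Beyond Lemmas~\ref{Heat-Smooth1}, \ref{Multi-ineq} and \ref{est-P-PN-P}, the only inputs are Lemmas~\ref{ZtN-ZsN0}, \ref{Zt-ZtN} and the bounds \eqref{V-ini}.

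For \eqref{est-ZN-HOLD-INI2} I would bound the generic summand, for $1\le k\le n$, by
\[
\|V_t^{\,n-k}Z_t^{:k:}\|_{-\alpha}\lesssim\|V_t\|_\beta^{\,n-k}\,\|Z_t^{:k:}\|_{-\alpha}\lesssim t^{-\frac{(n-k)(\alpha+\beta)}{2}-(k-1)\alpha'}\|X_0\|_{-\alpha}^{\,n-k}\,\Xi_k,
\]
with $\E\Xi_k^p<\infty$ by Lemma~\ref{ZtN-ZsN0}, while the top term $k=0$ is written as $V_t^{\,n}=V_t\cdot V_t^{\,n-1}$, keeping the single factor $V_t$ in $\C^{-\alpha}$ (uniformly bounded by $\|X_0\|_{-\alpha}$ by \eqref{V-ini}) and $V_t^{\,n-1}$ in $\C^\beta$, so that $\|V_t^{\,n}\|_{-\alpha}\lesssim t^{-\frac{(n-1)(\alpha+\beta)}{2}}\|X_0\|_{-\alpha}^{\,n}$. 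Since $\alpha'\le(\alpha+\beta)/2$, every time exponent above is $\le\frac{(n-1)(\alpha+\beta)}{2}$, so summing in $k$, raising to the $p$‑th power, taking $\sup_{t\in[0,T]}$, then $\E$, and a final H\"older inequality yields \eqref{est-ZN-HOLD-INI2}. The proof of \eqref{est-ZN-HOLD-INI1} is identical with $V_t^N$ in place of $V_t$: by \eqref{V-ini} each factor of $V_t^N$ in $\C^\beta$ costs an extra $t^{-\kappa/2}$ and the single factor $V_t^N$ used in $\C^{-\alpha}$ for the $k=0$ term also costs $t^{-\kappa/2}$, and collecting the powers produces exactly the weight $t^{\frac{(\alpha+\beta)(n-1)+n\kappa}{2}}$; all constants are $N$‑independent because Lemma~\ref{ZtN-ZsN0} is uniform in $N$.

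For the rate \eqref{ZTN-ZN-n-ini} I would decompose each summand of $\bar Z_t^{:n:}-(\bar Z_t^N)^{:n:}$ as
\[
(V_t^N)^{\,n-k}\big(Z_t^{:k:}-(Z_t^N)^{:k:}\big)+\big(V_t^{\,n-k}-(V_t^N)^{\,n-k}\big)Z_t^{:k:},
\]
and split the difference of powers via $a^m-b^m=(a-b)\sum_{i=0}^{m-1}a^ib^{m-1-i}$. The first piece is estimated just as in \eqref{est-ZN-HOLD-INI1}, with the Wick difference $Z_t^{:k:}-(Z_t^N)^{:k:}$ controlled through Lemma~\ref{Zt-ZtN}, which contributes the factor $(1+N^2)^{-p\alpha^-/2}$; the second piece uses $V_t-V_t^N=(I-P_N)V_t$ together with \eqref{est-P-PN} and Schauder's estimate, giving $\|(I-P_N)V_t\|_\beta\lesssim(\log N)^2N^{-\kappa}t^{-\frac{\alpha+\beta+\kappa}{2}}\|X_0\|_{-\alpha}$ (and likewise $\|(I-P_N)V_t\|_{-\alpha}\lesssim(\log N)^2N^{-\kappa}t^{-\kappa/2}\|X_0\|_{-\alpha}$), contributing $(\log N)^{2p}N^{-\kappa p}$. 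Again the top term $k=0$, i.e. $V_t^{\,n}-(V_t^N)^{\,n}=(I-P_N)V_t\cdot\sum_{i=0}^{n-1}V_t^{\,i}(V_t^N)^{\,n-1-i}$, must be handled by putting $(I-P_N)V_t$ in $\C^{-\alpha}$ (costing only $t^{-\kappa/2}$) and the sum in $\C^\beta$; estimating $V_t^{\,n}-(V_t^N)^{\,n}$ directly in $\C^\beta$ would yield the exponent $\frac{n(\alpha+\beta)+n\kappa}{2}$, which overshoots the stated weight. Summing, raising to the $p$‑th power and applying H\"older gives \eqref{ZTN-ZN-n-ini}, and letting $N\to\infty$ there yields the asserted $L^p(C([0,T];\C^{-\alpha}))$‑convergence of $(\bar Z^N)^{:n:}$ to $\bar Z^{:n:}$.

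I expect the main obstacle to be exactly this bookkeeping of the singularity at $t=0$: a factor of $V$ (resp.\ $V^N$) contributes $\tfrac{\alpha+\beta}{2}$ (resp.\ a further $\tfrac{\kappa}{2}$), a projection error $(I-P_N)V$ contributes $\tfrac{\alpha+\beta+\kappa}{2}$ when estimated in $\C^\beta$ but only $\tfrac{\kappa}{2}$ when estimated in $\C^{-\alpha}$, and the $k$‑th Wick power contributes $t^{-(k-1)\alpha'}$; one must verify that in every summand — most delicately in the top‑degree term, where the naive estimate is off by $\tfrac{\alpha+\beta}{2}$ — these add up to at most the stated weights. In each case the resolution is the same: the highest power of $V$ (or of $V-V^N$) has to be paired in $\C^{-\alpha}$ rather than $\C^\beta$, which is harmless because $V_t$ is uniformly bounded in $\C^{-\alpha}$ and $(I-P_N)V_t$ loses there only the arbitrarily small power $t^{-\kappa/2}$.
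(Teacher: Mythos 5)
Your proposal is correct and follows essentially the same route as the paper: expand via the binomial formula \eqref{ZN1230t-wick-ini}, control the $V$-factors through \eqref{V-ini}, \eqref{est-P-PN} and the multiplicative inequalities, invoke Lemmas \ref{ZtN-ZsN0} and \ref{Zt-ZtN} for the Wick factors and their differences, and place the top power of $V$ (resp.\ of $(\mathrm{I}-P_N)V$) in $\C^{-\alpha}$ rather than $\C^{\beta}$ to hit the stated time weights. Your exponent bookkeeping, including the choice $\alpha'\le(\alpha+\beta)/2$ and the $t^{-\kappa/2}$ cost of the projection, matches the paper's estimates \eqref{ZTN-ZN-n-v} and the displayed decompositions in its proof.
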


\begin{proof}
Using Lemma   \ref{Heat-Smooth1}  in \eqref{ZN1230t-wick-ini},  we have for $\beta>\alpha$,
\[
\|{(\bar Z_t^N)}^{:n:}\|_{-\alpha}
\lesssim\sum^{n-1}_{k=0}\|V_t^N\|^k_{{\beta}}\|{( Z_t^N)}^{:n-k:}\|_{-\alpha}+\|V_t^N\|_{{-\alpha}}\|V_t^N\|^{n-1}_{\beta},
\]
which together with \eqref{V-ini} and \eqref{est-ZN-HOLD}, implies \eqref{est-ZN-HOLD-INI1} easily. Similarly we obtain \eqref{est-ZN-HOLD-INI2}.

Combining Lemma  \ref{Multi-ineq} with  \eqref{V-ini}, we have for every $\beta>\alpha$, $\kappa>0$
\begin{equation}\label{ZTN-ZN-n-v}
\begin{aligned}
 t^{\frac{(\alpha+\beta+\kappa)n}{2}}  \|(V_t^N)^n-V^n_t\|_{{\beta}} &\lesssim  \frac{(\log N)^2}{N^\kappa}  \|X_0\|^n_{-\alpha},\\
 t^{\frac{(\alpha+\beta)(n-1)+n\kappa}{2}}  \|(V_t^N)^n-V^n_t\|_{{-\alpha}} &\lesssim  \frac{(\log N)^2}{N^\kappa}  \|X_0\|^n_{-\alpha}.
\end{aligned}
\end{equation}
Then finally  in \eqref{ZN1230t-wick-ini} we get
\begin{equation*}\begin{aligned}
\|{(\bar Z_t^N)}^{:n:}-{\bar Z_t}^{:n:} \|_{-\alpha}&\lesssim
   \sum^{n-1}_{k=0}\big[\|V_t^N\|^k_{{\beta}}\|{( Z_t^N)}^{:n-k:}
           -{ Z_t}^{:n-k:}\|_{-\alpha }
   \\& +\|(V_t^N)^k-V_t^k\|_{{\beta}}\|{Z}^{:n-k:}_t\|_{-\alpha}\big]+\|(V_t^N)^n-V_t^n\|_{{-\alpha}}.
\end{aligned}
\end{equation*}
Now \eqref{ZTN-ZN-n-ini} follows by \eqref{est-Z-HOLD}, \eqref{ZTN-ZN-n} and \eqref{ZTN-ZN-n-v}.

\end{proof}

\section{Main results for stochastic Allen-Cahn equations}\label{Pathwise-error}
Now we fix a stochastic
basis $(\Omega,\mathcal F, \{\mathcal{F}_t\}_{t\in[0,\infty)}, \mathbb P)$. Suppose that $\xi$ is  space-time white noise on $\mathbb R^+\times \T$. Let $T>0$ and $X_0\in\C^{-\alpha}$ with $\alpha\in(0,1)$ and let ${\bar Z},{\bar Z}^N$ be given in Section \ref{SEC3}.
Existence and uniqueness of the solutions to \eqref{ini2-Eq} have been obtained in \cite[Theorem 3.10]{RZZ2017} and  \cite[Theorem 6.2]{MW2010}.
By \cite[Theorem 5.1]{LR13} we obtain  existence and uniqueness of the solutions to \eqref{ini2-Eq_N}. The results are concluded  in the theorem below.

\begin{theorem}     \label{est-Y-HOLD-Th}
Let  $\alpha, \beta>0$ with $\alpha<\beta<\frac 2 3-\alpha$ and  $p> 1$.
Then there exist unique mild solutions $Y$ and $Y^{N}$  on $[0,T]$  to equations \eqref{ini2-Eq} and  \eqref{ini2-Eq_N}, respectively. Here, $Y,Y^{N}\in C([0,T];\C^\beta)$, $Y^{N}\in C([0,T];\C^\infty)$   such  that  for any $\frac {\alpha+\beta} 2<\gamma'<\frac 1 3$
 \begin{equation}\label{est-Y-EHOLD}
\E \sup_{0\leq t\leq T}t^{\gamma' p}\|Y_{t}^{N}\|^p_{\beta}<\infty,~~
\E \sup_{0\leq t\leq T}t^{\gamma' p}\|Y_{t}\|^p_{\beta}<\infty.
 \end{equation}
\end{theorem}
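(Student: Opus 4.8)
The plan is to treat \eqref{ini2-Eq} and \eqref{ini2-Eq_N} by a fixed-point argument in the space $E^{\beta}_{\gamma'}$ of continuous $\C^\beta$-valued paths with finite norm $\|Y\|_{E^{\beta}_{\gamma'}}:=\sup_{t\in[0,T]}t^{\gamma'}\|Y_t\|_{\beta}$, and then to upgrade the $N$-dependent solution to $C([0,T];\C^\infty)$. First I would fix the data: by Theorem \ref{Z-INI} (or Lemma \ref{ZtN-ZsN0} together with \eqref{V-ini}) the Wick powers $\bar Z^{:n:}_t$ and $(\bar Z^N_t)^{:n:}$, $n=1,2,3$, lie in $\C^{-\alpha}$ with the blow-up rates $t^{-(n-1)(\alpha+\beta)/2}$ (and the extra $t^{-n\kappa/2}$ factor in the approximating case), so they may be taken as given fixed (random) paths with finite $L^p$-moments of those weighted norms. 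The nonlinearity $\Psi(y,\underline z)=\sum_{j=0}^3 a_j\sum_{k=0}^j\binom jk y^k z^{:j-k:}+z$ is, for fixed $z$-data, a cubic polynomial in $y$ with coefficients that are distributions of regularity $\C^{-\alpha}$; since $\beta>\alpha$, the paraproduct estimate Lemma \ref{Multi-ineq}(ii) lets us multiply a $\C^\beta$ function against a $\C^{-\alpha}$ distribution and land in $\C^{-\alpha}$, and Lemma \ref{Multi-ineq}(i) handles the purely positive-regularity products $y^k$. Hence for $Y\in\C^\beta$ we get $\Psi(Y_s,\underline{\bar Z}_s)\in\C^{-\alpha}$ with norm controlled by $(1+\|Y_s\|_\beta)^3$ times a sum of the weighted Wick-power norms.

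Next I would set up the map $\mathcal M Y_t:=\int_0^t e^{(t-s)\Delta}\Psi(Y_s,\underline{\bar Z}_s)\,ds$ (and its projected analogue $\mathcal M^N$ with $P_N$ inserted). By the Schauder estimate Lemma \ref{Heat-Smooth1}(i), $\|e^{(t-s)\Delta}\Psi(Y_s,\underline{\bar Z}_s)\|_\beta\lesssim (t-s)^{-(\alpha+\beta)/2}\|\Psi(Y_s,\underline{\bar Z}_s)\|_{-\alpha}$; inserting the bound above and the singularity $s^{-3(\alpha+\beta)/2}$ coming from $\bar Z^{:3:}$, the time integral $\int_0^t (t-s)^{-(\alpha+\beta)/2} s^{-3(\alpha+\beta)/2}(1+s^{-\gamma'}\|Y\|_{E^\beta_{\gamma'}})^3\,ds$ converges provided $(\alpha+\beta)/2<1$, $3(\alpha+\beta)/2<1$ and $\gamma'<1$, which is exactly why we require $\beta<\tfrac23-\alpha$ and $\gamma'<\tfrac13$; a standard Beta-function computation then yields $\|\mathcal M Y\|_{E^\beta_{\gamma'}}\lesssim (1+\|Y\|_{E^\beta_{\gamma'}})^3\,T^{\theta}\,\mathcal R$ for some $\theta>0$ and $\mathcal R$ a finite combination of weighted Wick norms, with the same estimate for $\mathcal M^N$ uniformly in $N$ using \eqref{est-PN}. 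The cubic nonlinearity means $\mathcal M$ is not globally Lipschitz, so I would run the contraction on a short time interval $[0,T_0]$ with $T_0$ depending on $\mathcal R$, obtaining a unique local mild solution; the difference estimate $\|\mathcal M Y-\mathcal M\tilde Y\|_{E^\beta_{\gamma'}([0,T_0])}\le \tfrac12\|Y-\tilde Y\|_{E^\beta_{\gamma'}([0,T_0])}$ follows by factoring $Y^k-\tilde Y^k$ and using the same multiplicative inequalities. To reach the fixed time $T$ I would either invoke the a priori $L^p$ bound that is quoted from \cite[Theorem 3.10]{RZZ2017}, \cite[Theorem 6.2]{MW2010} and \cite[Theorem 5.1]{LR13} (which is what the statement explicitly allows), or argue that because $a_3<0$ the cubic term is dissipative so the $\C^\beta$-norm cannot blow up in finite time; in either case the local solutions patch together into a global one on $[0,T]$, and \eqref{est-Y-EHOLD} follows from the $E^\beta_{\gamma'}$-bound and the $L^p$-moment bounds on the Wick powers via Cauchy--Schwarz (or Hölder). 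For $Y^N$ the extra regularity $Y^N\in C([0,T];\C^\infty)$ is immediate: $Y^N$ takes values in the finite-dimensional space $P_N L^2$, on which all Besov norms are equivalent, and \eqref{ini2-Eq_N} is then a finite system of ODEs with polynomial (hence locally Lipschitz) coefficients, so classical ODE theory gives a smooth-in-space solution, while the uniform-in-$N$ bound \eqref{est-Y-EHOLD} still comes from the contraction argument above (whose constants did not depend on $N$).

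The main obstacle I expect is the bookkeeping of the time singularities: one must simultaneously absorb the $t^{-\gamma'}$ weight built into the solution norm, the $s^{-3(\alpha+\beta)/2}$ blow-up of $\bar Z^{:3:}$ at $s=0$, and the $(t-s)^{-(\alpha+\beta)/2}$ heat-kernel singularity, and check that the resulting Beta integral $B\!\left(1-\tfrac{\alpha+\beta}{2},\,1-\tfrac{3(\alpha+\beta)}{2}-\gamma'\cdot 3 + \cdots\right)$-type expression is finite and produces a genuinely positive power of $T_0$ (so that the contraction constant can be made $<1$). This forces the precise interplay $\alpha<\beta<\tfrac23-\alpha$ and $\tfrac{\alpha+\beta}{2}<\gamma'<\tfrac13$, and getting every exponent to line up — in particular verifying that the cube of the weighted input norm, after integration, still sits below the allowed $\gamma'$ weight on the output — is the delicate computational heart of the proof. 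A secondary subtlety is that in the approximating equation the products must be taken \emph{before} applying $P_N$; one handles this with \eqref{est-PN} from Lemma \ref{est-P-PN-P}, noting that $P_N$ is bounded on every $\C^\gamma$ with a constant independent of $N$ once one pays an arbitrarily small amount of regularity, which is harmless here since $\beta<\tfrac23-\alpha$ leaves room to spare.
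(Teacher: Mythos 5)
The paper does not actually prove this theorem: it defers entirely to \cite[Theorem 3.10]{RZZ2017} and \cite[Theorem 6.2]{MW2010} for $Y$ and to \cite[Theorem 5.1]{LR13} for $Y^N$, and simply records the conclusion. Your sketch is therefore not comparable to a proof in the paper but rather a reconstruction of the standard argument in those references, and as an outline it is essentially right: the weighted space $E^\beta_{\gamma'}$, the Schauder/multiplication bookkeeping, and the identification of the constraints $\alpha+\beta<\tfrac23$ and $\gamma'<\tfrac13$ all match what the cited works (and the paper's own later computations in Section \ref{SEC4-1}, which use the split $\Psi=F+\widetilde\Psi$ of \eqref{Nota-F-De} rather than your cruder ``everything in $\C^{-\alpha}$'' treatment) actually do. One small internal inconsistency: you first state the correct blow-up rate $t^{-(n-1)(\alpha+\beta)/2}$ for $\bar Z^{:n:}$, but then feed the singularity $s^{-3(\alpha+\beta)/2}$ into the Beta integral for $\bar Z^{:3:}$, whereas the correct exponent under the paper's convention (Theorem \ref{Z-INI}) is $s^{-(\alpha+\beta)}$. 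Your stronger singularity happens still to be integrable under $\alpha+\beta<\tfrac23$, so nothing breaks, but the exponent audit you rightly identify as the delicate part should be done with the $(n-1)$ convention.

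The one place where your argument has a real gap, if you intend it to be self-contained rather than a citation, is the passage from local existence to the \emph{moment} bound \eqref{est-Y-EHOLD}, uniformly in $N$. ``Because $a_3<0$ the cubic term is dissipative so the $\C^\beta$-norm cannot blow up'' establishes pathwise global existence but not \eqref{est-Y-EHOLD}: to take $p$-th moments you need a quantitative a priori estimate of the form $\sup_t t^{\gamma'}\|Y_t\|_\beta\le P\bigl(\|\bar Z\|_{\bar{\mathcal L}}\bigr)$ with $P$ a fixed polynomial independent of $N$, which is obtained in \cite{MW2010,TW2018} by testing the equation against a power of $Y$ and exploiting the sign of $a_3$; only then do the moment bounds on the Wick powers (Lemma \ref{ZtN-ZsN0}, Theorem \ref{Z-INI}) transfer to $Y$ and $Y^N$. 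Iterating local contractions alone gives an existence time depending on the data and hence, in general, worse-than-polynomial dependence. Since you explicitly allow falling back on the cited theorems for exactly this step, your proposal is consistent with what the paper does; just be aware that this is the step carrying the real content, not a formality.
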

\begin{remark}\label{remgam}
 We  notice that if \eqref{est-Y-EHOLD} holds for $\gamma'<1 /3$, it also holds for $\gamma'\geq 1/3$ since $t^a\lesssim t^b$ for uniform $t\leq T$ with $a\geq b$.
\end{remark}

\bigskip

We conclude that under the setting of Theorem \ref{est-Y-HOLD-Th},
\begin{equation} \label{X-FUNC=y+z}
X=Y+\bar Z,~~X^N=\bar Z^N+Y^N
\end{equation}
are solutions to \eqref{initial-Eq} and \eqref{app-Eq} on $[0,T]$ with initial values $X_0$ and $P_NX_0$, respectively. Since  Theorem \ref{est-Y-HOLD-Th} holds for any $\alpha,\beta,\gamma'>0$ with $\alpha<\beta$, $\frac {\alpha+\beta} 2<\gamma'<\frac 1 3$, then we let $\beta$ close to $\alpha$  and together with  Theorem \ref{Z-INI}, we have for every $\alpha\in(0,1/3)$, $\gamma'>\alpha$ and $p>1$
\begin{equation} \label{X=Y+Z-EST}
\E \sup_{0\leq t\leq T}t^{\gamma' p}\|X_{t}^{N}\|^p_{-\alpha}<\infty,~~
\E \sup_{0\leq t\leq T}t^{\gamma' p}\|X_{t}\|^p_{-\alpha}<\infty.
\end{equation}

\subsection{Pathwise error estimates for stochastic Allen-Cahn equations}\label{SEC4-1}

In the following we fix $\alpha, \beta,p, \gamma',\kappa$ satisfying the following condition:

 $\alpha, \beta>0$ with $\alpha<\beta<\frac 2 3-\alpha$ and $p>1$,
  $\gamma'>0$ with $\frac{2\alpha+\beta}{2}<\gamma'<\frac 1 3$,
and  $\kappa>0$  sufficiently small.

\subsubsection* {Stopping times} \label{S-tim}

 Following the notations in the above section, for some fixed $M>0$ sufficiently large, we define stopping times
 \begin{equation}\label{stp1}
 \tau^M:=\inf\{t\leq T:t^{\gamma' }\|Y_t\|_\beta\geq M\},~
\end{equation}
and
to approximate $Y$ with $Y^N$, we define
 \begin{equation}
 \begin{aligned}\label{stp2}
\sigma_N:&=\inf\{t\leq T:t^{\gamma' }\|Y^N_t-Y_t\|_{\beta}>1\},\\
 \nu_N^{M,\epsilon}:&=\inf\{t\leq T: \sup_{n=1,2,3} t^{\frac{(\alpha+\beta){(n-1)}}{2}}\|\bar Z_t^{:n:}\|_{ {\alpha}}\geq M, \\
    &~~~~~~~~~~~~~~~~~\sup_{n=1,2,3} t^{\frac{(\alpha+\beta){(n-1)+n\kappa}}{2}}\|\bar Z_t^{:n:}-
 (\bar Z^N_t)^{:n:}\|_{ {\alpha}}>\epsilon\},
\end{aligned}
\end{equation}
with $\epsilon>0$ arbitrarily small. For notation's simplicity, we  set
\[
\|\cdot\|_{\mathcal M_\sigma }:=\sup_{t\leq \sigma}t^{\gamma'}\|\cdot\|_\beta
\]
for any stopping time $\sigma$ and
$\| v\|_{ \bar{\mathcal{L}}}:=\sup_{n=1,2,3} \sup_{t\leq T}t^{\frac{(\alpha+\beta){(n-1)+n\kappa}}{2}}\| v_t^{:n:}\|_{ {-\alpha}}.$
 Then by \eqref{mild-2} and \eqref{mild-2-N}
\begin{equation*}\begin{aligned}
   Y_t-Y_t^N=&\int_0^t ({\rm I}-P_N)e^{(t-s)A}
   {\Psi}(Y_s,\underline{\bar Z}_s)ds
+\int_0^t P_Ne^{(t-s)A}\Big\{{\Psi}(Y_s,\underline{\bar Z}_s)-
  {\Psi}(Y_s^N,\underline{\bar Z}_s^N)\Big\}ds.
\end{aligned}\end{equation*}
In the following  we use the decomposition
\begin{equation} \label{Nota-F-De}
 \Psi(u,\underline { z})=F(u)+\widetilde{\Psi}(u,\underline{ z}),
\end{equation}
with $u\in \C^\beta, \underline{ z}=( z,z^{:2:}, z^{:3:})$ that
\begin{equation*} \label{Nota-F}
    F(u):=\sum_{i=0}^3a_iu^i ,~
     \widetilde{\Psi}(u,\underline {\bar z}):=\sum_{i=1}^3a_i\bar z^{:i:}+3a_3(u^2\bar z+u\bar z^{:2:})+2a_2u\bar z+\bar z.
\end{equation*}
Under the assumption that $0<\alpha<\beta,\alpha+\beta<2$ and applying Lemma  \ref{Multi-ineq} and Young's inequality, we easily have that for any $u\in\C^\beta,\underline{ z}=(z,z^{:2:},z^{:3:})$ with $z^{:n:}\in\C^{-\alpha}$ ${F}(u)\in\C^\beta, \widetilde{\Psi}(u,\underline z)\in\C^{-\alpha}$ and
\begin{equation}\begin{aligned}\label{F-PSI}
\|{F}(u)\|_{{\beta}}&\lesssim 1+\|u\|^3_{{\beta}},\\
 \|\widetilde{\Psi}(u,\underline{z})\|_{-\alpha}&\lesssim 1+\|u\|^2_{\beta}\| z\|_{-\alpha}+\|u\|_{\beta}\|z^{:2:}\|_{-\alpha}+\| z^{:3:}\|_{-\alpha},
\end{aligned}
\end{equation}
 and that for $v\in\C^\beta, \underline{w}=(w,w^{:2:},w^{:3:})$ with $w^{:n:}\in\C^{-\alpha}$ we obtain
\begin{equation}\begin{aligned}\label{F-PSI2}
&\|{F}(u)-F(v)\|_{{\beta}}\lesssim \|u-v\|_{{\beta}}( 1+\|u\|^2_{{\beta}}+\|v\|^2_{{\beta}}),\\
 \|\widetilde{\Psi}(u,\underline {z})-&\widetilde{\Psi}(v,\underline{w})\|_{-\alpha}
 \lesssim \big\{ \|u\|_{{\beta}} \| z\|_{-\alpha}+\|v\|_{{\beta}}\| z\|_{-\alpha}+\|z^{:2:}\|_{-\alpha} \big\}\|u-v\|_{{\beta}} \\
  & + \|v\|_{{\beta}}^2 \| z- w\|_{-\alpha}
   + \|v\|_{{\beta}} \| z^{:2:}-w^{:2:}\|_{-\alpha}
   +\| z^{:3:}- w^{:3:}\|_{-\alpha}.
\end{aligned}
\end{equation}
Then  by  Lemmas \ref{est-P-PN-P}, \ref{Heat-Smooth1}  and the inequality $s^{-a}\lesssim s^{-b}$ with $0\leq a\leq b$ and
 the assumption $\alpha<\beta$, $\frac{2\alpha+\beta}{2}<\gamma'<\frac 1 3$, $\kappa$ small enough,  we  deduce that for $t\in[0,\tau^M\wedge\sigma_N\wedge\nu_N^{M,\epsilon}]$
\begin{equation*}
\begin{aligned}
&\|Y_t-Y_t^N\|_{{\beta}}
 \lesssim_M    \frac{(\log N)^2}{N^\kappa}
 \int_0^t \Big\{(t-s)^{-\frac{\alpha+\beta+\kappa}{2}}
   \|\widetilde{\Psi}(Y_s,\underline {\bar Z}_s) \|_{{-\alpha}}
   +   (t-s)^{-\frac{\kappa}{2}} \|F(Y_s)\|_{{\beta}}
    \Big\} ds \\
 &~~~~+\int_0^t
  \Big\{   (t-s)^{-\frac{\alpha+\beta+\kappa}{2}}
   \|\widetilde{\Psi}(Y_s,\underline {\bar Z}_s)-
          \widetilde{\Psi}(Y_s^N,\underline{\bar Z}_s^N) \|_{{-\alpha}}
   +   (t-s)^{-\frac{\kappa}{2}} \|F(Y_s)-F(Y_s^N)\|_{{\beta}}
   \Big\} ds \\
&\lesssim_M
  \frac{(\log N)^2}{N^\kappa}
 \int_0^t  \big\{   (t-s)^{-\frac{\alpha+\beta+\kappa}{2}}s^{-2\gamma'}+(t-s)^{-\frac{\kappa}{2}} s^{-3\gamma'} \big\} ds\\
  &~~~~  + \|\bar Z-\bar Z^N\|_{\bar {\mathcal L}}
    \int_0^t(t-s)^{-\frac{\alpha+\beta+\kappa}{2}}   s^{-2\gamma'-\frac \kappa 2}  ds\\
  &~~~~ +
     \int_0^t\big\{ (t-s)^{-\frac{\alpha+\beta+\kappa}{2}}   s^{-\gamma'}
     +(t-s)^{-\frac{\kappa}{2}}  s^{-2\gamma'}\big\} \|Y_s-Y_s^N\|_{{\beta}}ds,
 \end{aligned}
\end{equation*}
which, together with
\begin{equation}\label{ineq}
  \int_0^t (t-s)^{-a}s^{-b}\leq t^{1-a-b},
\end{equation}
for $a,b>0$ satisfying $a+b<1$, implies that
\begin{equation*}\label{20-Y-YN}
\begin{aligned}
\|Y_t-Y_t^N\|_{{\beta}}
&\lesssim
  \frac{(\log N)^2}{N^\kappa} t^{1-\frac{\kappa}{2}-3\gamma'}
    + \|\bar Z-\bar Z^N\|_{\bar {\mathcal L}}
     t^{1-\frac{\alpha+\beta+2\kappa}{2}-2\gamma'}\\
&~~~~   +
    \int_0^t\big\{ (t-s)^{-\frac{\alpha+\beta+\kappa}{2}}   s^{-\gamma'}
     +(t-s)^{-\frac{\kappa}{2}}  s^{-2\gamma'}\big\} \|Y_s-Y_s^N\|_{{\beta}}ds.
 \end{aligned}
\end{equation*}
Multiplying by $t^{\gamma'}$  and using the Gronwall's inequality we have  for $t\in[0,\tau^M\wedge\sigma_N\wedge\nu_N^{M,\epsilon}]$
\begin{equation*}
t^{\gamma'} \|Y_t-Y_t^N\|_{{\beta}}
\lesssim_M
  \frac{(\log N)^2}{N^\kappa} t^{1-\frac{\kappa}{2}-2\gamma'}
+  \|\bar Z-\bar Z^N\|_{\bar {\mathcal L}}   t^{1-\frac{\alpha+\beta+2\kappa}{2}-\gamma'},
\end{equation*}
where the constants we omit are independent of $N$. Then we find $C=C(M,T)>0$, which is independent of $N$, such that
\begin{equation}\label{Y-ES}
 \|Y-Y^N\|_{\mathcal{M}_ {\sigma_N\wedge\tau^M\wedge\nu_N^{M,\epsilon}}}
\leq
C\big(
  \frac{(\log N)^2}{N^\kappa}
+  \|\bar Z-\bar Z^N\|_{\bar {\mathcal L}} \big).
\end{equation}
Let $\epsilon$ and $N_0=N_0(M)$ be such that $\|\bar Z-\bar Z^N\|_{\bar {\mathcal L}}<\epsilon$ and
$\epsilon+ \frac{(\log N)^2}{N^\kappa} <\frac 1{ C}$ hold
for any $N>N_0$. Such $\epsilon$ and $N_0$ can be ensured by \eqref{ZTN-ZN-n-ini}. Then
$\|Y-Y^N\|_{ {\mathcal M}_{\sigma_N\wedge\tau^M\wedge\nu_N^{M,\epsilon}}}<1$  for every $N>N_0$,
and, recalling the definition of $\sigma_N^M$, we have $\sigma_N\wedge\tau^M\wedge\nu_N^{M,\epsilon}= \tau^M\wedge\nu_N^{M,\epsilon}$. As a consequence, for every $N>N_0$ we have
\begin{equation}\label{t-est1}
   \sup_{0\leq t\leq \tau^M\wedge\nu_N^{M,\epsilon}}t^{\gamma'} \|Y_t-Y_t^N\|_{{\beta}}<1.
\end{equation}
Besides, by the definition of $\tau^M,\nu_N^{M,\epsilon}$,  we also have for every $N\geq N_0$
\begin{equation}\label{set0}
\begin{aligned}
    &  \sup_{0\leq t\leq \tau^M\wedge\nu_N^{M,\epsilon}} \sup_{n=1,2,3}
    \Big\{t^{\gamma'}\|Y_t\|_{{\beta}},t^{\gamma'}\|Y_t^N\|_{{\beta}},
    t^{\frac {(\alpha+\beta)(n-1)}{2}}\|\bar Z_t^{:n:}\|_{-\alpha}, \\
     &~~~~~~~~~~~~~~~ ~t^{\frac {(\alpha+\beta)(n-1)+n \kappa}{2}}\|(\bar Z_t^N)^{:n:}\|_{-\alpha}\Big\}\leq M+1.
\end{aligned}
\end{equation}
By Theorems \ref{Z-INI} and \ref{est-Y-HOLD-Th} we can deduce that for the above fixed $M,\epsilon$
 \begin{equation} \label{t=T,1}
 \lim_{N\rightarrow\infty}P(\nu_N^{M,\epsilon}=T)=1,
 \end{equation}
 and
 \begin{equation} \label{t=T,2}
 \lim_{M\rightarrow\infty}P(\tau^M=T)=1.
\end{equation}
\bigskip

In the following we consider  pathwise error estimates for  $Y^N,Z^N,Y$ and $Z$ before the stopping times $\tau^M\wedge\nu_N^{M,\epsilon}$ with $\epsilon,M>0$ fixed as above for $N>N_0$. All the  constants may depend on $M$.

\begin{theorem}\label{MAIN-THM1}
Assume the setting in Section \ref{SEC4-1}. Then for $N\geq N_0=N_0(M)$
\begin{equation} \label{PY-Y-LP-LEM}
  \E \Big(\sup_{0\leq t\leq \tau\wedge\nu_N^{\epsilon}}t^{\gamma' p}  \|P_NX_t-X_t^N\|_{{\beta}}^p\Big)
  \lesssim_M (1+N^2)^{-\frac{p \alpha^- }{2}},
\end{equation}
where  $\alpha^-$  denotes $\alpha-\delta$ for every $\delta>0$.
\end{theorem}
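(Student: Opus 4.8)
The plan is to bound the difference $P_N X_t - X_t^N$ by splitting it along the decomposition $X = Y + \bar Z$, $X^N = Y^N + \bar Z^N$, so that
\[
P_N X_t - X_t^N = \big(P_N \bar Z_t - \bar Z_t^N\big) + \big(P_N Y_t - Y_t^N\big),
\]
and then to estimate the two pieces separately in $\C^\beta$ up to the stopping time $\tau^M \wedge \nu_N^{M,\epsilon}$. Throughout I work on the event $\{N > N_0\}$, where by \eqref{t-est1} and \eqref{set0} the quantities $t^{\gamma'}\|Y_t\|_\beta$, $t^{\gamma'}\|Y_t^N\|_\beta$, $t^{\gamma'}\|Y_t - Y_t^N\|_\beta$ and the (approximate) Wick-power norms are all controlled by $M+1$; this is what makes all implicit constants $M$-dependent but $N$-independent.

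First I would handle the linear part: since $\bar Z_t$ and $\bar Z_t^N$ both enjoy $\C^\beta$-type regularity away from $t=0$ — more precisely, from Lemma \ref{Heat-Smooth1} applied to \eqref{ZN1230t-wick-ini} together with the moment bounds in Theorem \ref{Z-INI} — one gets
\[
\E \sup_{0\le t\le T} t^{\gamma' p}\|P_N\bar Z_t - \bar Z_t^N\|_{\beta}^p
\;\lesssim\; \E\sup_{0\le t\le T}t^{\gamma' p}\|P_N\bar Z_t-\bar Z_t\|_\beta^p
+\E\sup_{0\le t\le T}t^{\gamma' p}\|\bar Z_t-\bar Z_t^N\|_\beta^p,
\]
and each term is $\lesssim (1+N^2)^{-p\alpha^-/2}$: the first by Lemma \ref{est-P-PN-P} (estimate \eqref{est-P-PN}) combined with the higher-regularity bound on $\bar Z$, the second by the $\C^{-\alpha}$-convergence rate \eqref{ZTN-ZN-n-ini} together with the Schauder smoothing that trades $\C^{-\alpha}$ regularity for $\C^\beta$ regularity at the price of the time weight $t^{\gamma'}$ (using $\gamma' > (2\alpha+\beta)/2 > (\alpha+\beta)/2$). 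The logarithmic $N^{-\kappa}$ contribution in \eqref{ZTN-ZN-n-ini} is dominated by $(1+N^2)^{-p\alpha^-/2}$ for $\kappa$ large relative to $\alpha$, or can simply be absorbed since we only claim the rate $\alpha^-$.

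Next comes the nonlinear part $P_N Y_t - Y_t^N$. Writing $P_N Y_t - Y_t^N = (P_N Y_t - Y_t) + (Y_t - Y_t^N)$, the first summand is again controlled by Lemma \ref{est-P-PN-P} using that $Y \in C([0,T];\C^\beta)$ with the time weight (Theorem \ref{est-Y-HOLD-Th}) and that $Y$ actually has slightly better spatial regularity than $\beta$ inside $(0,T]$, so $\|P_N Y_t - Y_t\|_\beta \lesssim N^{-\lambda}\|Y_t\|_{\beta+\lambda}$ — though here one must be a little careful, since $Y$ only lies in $\C^{\beta}$; the cleaner route is to feed the mild formulation \eqref{mild-2} into Lemma \ref{est-P-PN-P} directly, estimating $\|({\rm I}-P_N)\int_0^t e^{(t-s)A}\Psi(Y_s,\underline{\bar Z}_s)\,ds\|_\beta$ by putting $\lambda = \kappa$ derivatives onto the heat kernel and using \eqref{F-PSI}. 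For $\|Y_t - Y_t^N\|_\beta$ I would redo the Gronwall argument that already produced \eqref{Y-ES}, but now keeping the $L^p$-norms of the driving differences rather than passing to the stopping-time bound $\|\bar Z - \bar Z^N\|_{\bar{\mathcal L}} < \epsilon$: starting from the mild difference formula, applying \eqref{F-PSI2}, Lemma \ref{Heat-Smooth1}, Lemma \ref{est-P-PN-P}, the Beta-type integral bound \eqref{ineq}, and Gronwall exactly as in Section \ref{SEC4-1}, one arrives at
\[
\sup_{0\le t\le \tau^M\wedge\nu_N^{M,\epsilon}} t^{\gamma'}\|Y_t - Y_t^N\|_\beta
\;\lesssim_M\; \frac{(\log N)^2}{N^\kappa} + \|\bar Z - \bar Z^N\|_{\bar{\mathcal L}},
\]
and then taking $p$-th moments and invoking \eqref{ZTN-ZN-n-ini} gives the rate $(1+N^2)^{-p\alpha^-/2}$ (the $(\log N)^2 N^{-\kappa}$ term again being harmless). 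Collecting the linear and nonlinear estimates via the triangle inequality in $\C^\beta$ and Minkowski's inequality in $L^p(\Omega)$ yields \eqref{PY-Y-LP-LEM}.

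The main obstacle I anticipate is the bookkeeping of exponents: one must verify that every application of Schauder smoothing loses at most the allotted amount of regularity and generates a time singularity $s^{-a}(t-s)^{-b}$ with $a+b<1$ so that \eqref{ineq} applies, and simultaneously that the time weights $t^{\gamma'}$ on the left are large enough to absorb all the accumulated negative powers of $t$ coming from $\|Y_s\|_\beta \lesssim s^{-\gamma'}$, $\|\bar Z_s^{:n:}\|_{-\alpha}\lesssim s^{-(\alpha+\beta)(n-1)/2}$ and the extra $\kappa$'s. This is precisely why the hypotheses $\alpha < \beta < 2/3 - \alpha$ and $(2\alpha+\beta)/2 < \gamma' < 1/3$ are imposed with $\kappa$ taken sufficiently small, and getting the cubic nonlinearity's worst term $a_3 Y^2 \bar Z$ (equivalently the $3a_3(u^2\bar z + u\bar z^{:2:})$ in $\widetilde\Psi$) through this with room to spare is the delicate point — it is the same computation that underlies the validity of \eqref{Y-ES}, now carried out quantitatively in $L^p$.
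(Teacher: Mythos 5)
Your overall Gronwall architecture matches the paper's, but there are two genuine gaps, and both trace back to the same missed structural fact: $\bar Z^N_t=P_N\bar Z_t$ \emph{exactly} (by construction, $Z^N_{-\infty,t}=P_NZ_{-\infty,t}$ and $V^N_t=P_NV_t$, and $P_N$ commutes with $S(t)$), hence $P_NX_t-X_t^N=P_NY_t-Y_t^N$ identically. First, your treatment of the ``linear part'' is unsound as written: you propose to bound $\E\sup_t t^{\gamma'p}\|P_N\bar Z_t-\bar Z_t\|^p_\beta$ and $\E\sup_t t^{\gamma'p}\|\bar Z_t-\bar Z_t^N\|^p_\beta$ in the \emph{positive}-regularity space $\C^\beta$, but the stochastic convolution $Z_t$ has regularity $\C^{-\alpha}$ for every $\alpha>0$ and no better, uniformly in $t>0$ (there is no Schauder smoothing available for it, only for the initial-value part $V_t$), so these norms are a.s.\ infinite. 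The term you are estimating happens to be zero, which is precisely the observation the paper uses and which makes the $\C^\beta$ statement of the theorem meaningful at all.

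Second, and more consequentially, your bound for the nonlinear part only delivers the rate $N^{-\kappa}$, not $N^{-\alpha^-}$. Reproducing the argument behind \eqref{Y-ES} for $Y-Y^N$ necessarily carries the $({\rm I}-P_N)\int_0^te^{(t-s)A}\Psi\,ds$ term, and putting $\lambda=\kappa$ derivatives on the heat kernel forces $\kappa$ small (the time singularities $(t-s)^{-\frac{\alpha+\beta+\kappa}{2}}s^{-2\gamma'-\frac\kappa2}$ must remain integrable, and for $\alpha$ near $2/9$ one has $\kappa<1-4\alpha<\alpha$), so $(\log N)^2N^{-\kappa}$ is \emph{not} harmless: it dominates $(1+N^2)^{-\alpha^-/2}$ and cannot be absorbed. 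The paper's proof avoids this entirely: since $P_NY_t-Y_t^N=\int_0^tP_Ne^{(t-s)A}\{\Psi(Y_s,\underline{\bar Z}_s)-\Psi(Y_s^N,\underline{\bar Z}_s^N)\}\,ds$ contains no $({\rm I}-P_N)$ term, the only place a projection error enters is through $\|Y_s-Y_s^N\|_\beta$ inside the integrand, which is split as $\|Y_s-Y_s^N\|_\beta\lesssim \frac{(\log N)^2}{N^{\alpha}}\|Y_s\|_{\beta+\alpha}+\|P_NY_s-Y_s^N\|_\beta$ using \eqref{est-P-PN} with $\lambda=\alpha$; the factor $N^{-\alpha}$ is paid for by the \emph{spatial} regularity $Y\in\C^{\beta+\alpha}$ (available by Theorem \ref{est-Y-HOLD-Th} since $\beta+\alpha<\frac23-\alpha$ under the standing assumptions), not by the heat kernel, so the time-integrability constraints are unchanged and Gronwall closes at the rate $N^{-\alpha}$. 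Combined with \eqref{ZTN-ZN-n-ini} at $\kappa=\alpha$ for the Wick-power differences, this is what produces $(1+N^2)^{-p\alpha^-/2}$. Without this substitution your scheme cannot reach the claimed rate.
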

\begin{proof}  Since
\[
P_NX_t-X_t^N=P_NY_t-Y_t^N,
\]
  and according to \eqref{mild-2} and \eqref{mild-2-N} with the decomposition in \eqref{Nota-F-De}, we have
\begin{equation*}\begin{aligned}
  & P_NY_t-Y_t^N=\int_0^t P_Ne^{(t-s)A}\Big\{F(Y_s)-F(Y_s^N)+\widetilde{\Psi}(Y_s,\underline{\bar Z}_s)-
   \widetilde{\Psi}(Y_s^N,\underline{\bar Z}_s^N)
           \Big\}ds .
\end{aligned}\end{equation*}
By Lemmas \ref{est-P-PN-P}, \ref{Heat-Smooth1} with $\beta>\alpha$ and the inequality $s^{-a}\lesssim s^{-b}$ with $0\leq a\leq b$ we deduce for  $t\leq \tau^M\wedge\nu_N^{M,\epsilon}$,
\begin{equation}\label{PN-Y715}
\begin{aligned}
&\|P_NY_t-Y_t^N\|_{{\beta}}\\
 \lesssim& \int_0^t\Big\{
    (t-s)^{-\frac{\alpha+\beta+\kappa}{2}}
   \|\widetilde{\Psi}(Y_s,\underline {\bar Z}_s)-\widetilde{\Psi}(Y_s^N,\underline{\bar Z}_s^N) \|_{{-\alpha}}
   +   (t-s)^{-\frac{\kappa}{2}} \|F(Y_s)-F(Y_s^N)\|_{{\beta}}
   \Big\} ds \\
\lesssim &\int_0^t \Big\{\big((t-s)^{-\frac{\alpha+\beta+\kappa}{2}}s^{-\gamma'}+
     (t-s)^{-\frac{\kappa}{2}} s^{-2\gamma'}\big) \|Y_s-Y_s^N\|_{{\beta}}
     + (t-s)^{-\frac{\alpha+\beta+\kappa}{2}} \\
     &~~~~\cdot\big( s^{-2\gamma'}\|\bar Z_s -\bar Z^N_s\|_{{-\alpha}}
     +s^{-\gamma'}\|\bar Z_s^{:2:} -(\bar Z^N_s)^{:2:}\|_{{-\alpha}}
     +\|\bar Z_s^{:3:} -(\bar Z^N_s)^{:3:}\|_{{-\alpha}}\big)\Big\}
   ds,
 \end{aligned}
\end{equation}
where the second inequality follows by the estimates in \eqref{F-PSI2}. We note that in the second inequality of \eqref{PN-Y715}, by \eqref{est-P-PN} we have
\[
\|Y_s-Y_s^N\|_{{\beta}}\lesssim \frac{(\log N)^{2}}{N^{\alpha }}\|Y_s\|_{{\beta+\alpha}}+\|P_NY_s-Y_s^N\|_{{\beta}}.
\]
Following \eqref{PN-Y715}, we further have
\begin{equation*}\label{PN-Y716}
\begin{aligned}
&\|P_NY_t-Y_t^N\|_{{\beta}}
\lesssim \int_0^t \big\{(t-s)^{-\frac{\alpha+\beta+\kappa}{2}}s^{-\gamma'}+
     (t-s)^{-\frac{\kappa}{2}} s^{-2\gamma'}\big\} \|P_NY_s-Y_s^N\|_{{\beta}}ds\\
 &~ +    \int_0^t \Big\{\frac{(\log N)^{2}}{N^{\alpha }}\big((t-s)^{-\frac{\alpha+\beta+\kappa}{2}}s^{-\gamma'}+
     (t-s)^{-\frac{\kappa}{2}} s^{-2\gamma'}\big) \|Y_s\|_{{\beta+\alpha}}
     +
     (t-s)^{-\frac{\alpha+\beta+\kappa}{2}} \\
     &~~~~~~~\cdot\big( s^{-2\gamma'}\|\bar Z_s -\bar Z^N_s\|_{{-\alpha}}
     +s^{-\gamma'}\|\bar Z_s^{:2:} -(\bar Z^N_s)^{:2:}\|_{{-\alpha}}
     +\|\bar Z_s^{:3:} -(\bar Z^N_s)^{:3:}\|_{{-\alpha}}\big)\Big\}
   ds.
 \end{aligned}
\end{equation*}
Similar to  Theorem \ref{est-Y-HOLD-Th}, we deduce that $\E \sup_{0\leq t\leq T}t^{\gamma' p}\|Y_{t}\|^p_{\beta+\alpha}<\infty$ since by the setting in Section \ref{SEC4-1} that $ \frac{2\alpha+\beta}{2}<\gamma'<\frac 1 3$, $\bar\beta:=\beta+\alpha$ satisfies the conditions on $\beta$ in Theorem \ref{est-Y-HOLD-Th}.
Moreover, for $\alpha,\beta,\gamma'$ satisfying $ \frac{2\alpha+\beta}{2}<\gamma'<\frac 1 3$, it also holds that
$1-\frac{n(2\alpha+\beta)}{2}-(3-n)\gamma'>0$, $n=1,2,3$.
Then using Gronwall's inequality and  \eqref{ineq}, we have
\begin{equation*}
\begin{aligned}
  \E \sup_{0\leq t\leq \tau^M\wedge\nu_N^{M,\epsilon}} t^{\gamma' p} &\|P_NY_t-Y_t^N\|_{{\beta}}^p
   \lesssim_M    T^ {(  {1-\frac{\kappa }{2}-2\gamma'} ){p}}  \frac{(\log N)^{2p}}{N^{\alpha p}}
      \E \sup_{0\leq t\leq T}t^{\gamma' p}\|Y_{t}\|^p_{\beta+\alpha}\\
& +   T^{(1-\frac{n(2\alpha+\beta+\kappa)}{2}-(2-n)\gamma')p}
 \E\sup_{0\leq t\leq T}t^{\frac{(\alpha+\beta){(n-1)}+n\alpha}{2} p} \|\bar Z^{:n:}_{t}-(\bar Z^N_{t})^{:n:}\|_{ {-\alpha}}^p
\end{aligned}
\end{equation*}
with $\kappa>0$ sufficiently small. By \eqref{ZTN-ZN-n-ini}  we also have
\[
\E \sup_{0\leq t\leq T}  t^{\frac{(\alpha+\beta){(n-1)}+n\alpha}{2} p}
    \|\bar Z^{:n:}_{t}-(\bar Z^N_{t})^{:n:}\|_{ {-\alpha}}^p \lesssim \frac{(\log N)^{2p}}{N^{\alpha p}}+(1+N^2)^{-\frac{ p\alpha^-}{2}}.
    \]
Hence \eqref{PY-Y-LP-LEM} follows.

\end{proof}

\subsection {Rates of convergence for stochastic Allen-Cahn equations}

Now we present rates of convergence for stochastic Allen-Cahn equations, which is the main result throughout our paper.
\begin{theorem}\label{main-rate}
Assume the setting in  Section \ref{SEC4-1} and let $X_0\in\C^{-\alpha}$ with $\alpha\in(0,2/9)$. Let $X$, $X^N$ denote the solutions to equations \eqref{initial-Eq} and \eqref{app-Eq} on $[0,T]$ with initial values $X_0$ and $P_NX_0$ respectively.  Then for any $\delta>0$ and $\gamma'>\frac{3\alpha}{2} $
\begin{equation} \label{Px-x-LP-T}
 \lim_{N\rightarrow\infty}\mathbb P \Big(\sup_{t\in[0,T]} t^{\gamma'} \|P_NX_t-X_t^N\|_{-\alpha}\gtrsim N^{\delta-{\alpha}}\Big)=0,
\end{equation}
\begin{equation} \label{Px-x-LP-T2}
 \lim_{N\rightarrow\infty}\mathbb P \Big(\sup_{t\in[0,T]}t^{\gamma'} \|X_t-X_t^N\|_{{-\alpha}}\gtrsim N^{\delta-{\alpha}}\Big)=0.
\end{equation}
\end{theorem}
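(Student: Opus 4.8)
The plan is to split the total error into the \emph{Galerkin error} $P_NX-X^N$, which carries the announced rate, and the \emph{projection error} $X-P_NX$, which is of strictly higher order. Since $P_N$ and $S(t)=e^{-tA}$ are Fourier multipliers they commute, so $P_N\bar Z_t$ solves \eqref{ini1-Eq_N} with datum $P_NX_0$ and hence $P_N\bar Z_t=\bar Z_t^N$; as already used in the proof of Theorem \ref{MAIN-THM1} this gives $P_NX_t-X_t^N=P_NY_t-Y_t^N$, while $X_t-P_NX_t=(Y_t-P_NY_t)+(\bar Z_t-\bar Z_t^N)$. By Remark \ref{remgam} I may restrict to $3\alpha/2<\gamma'<1/3$, and such $\gamma'$ exists precisely because $\alpha<2/9$. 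I then fix $\beta$ with $\alpha<\beta<2(\gamma'-\alpha)$ (possible since $\gamma'-\alpha>\alpha/2$, and then automatically $\beta<2/3-\alpha$), together with $\kappa>0$ small and some $p>1$, so that the standing hypotheses of Section \ref{SEC4-1} hold and Theorems \ref{MAIN-THM1}, \ref{est-Y-HOLD-Th}, \ref{Z-INI} are available. For the prescribed $\delta>0$ I set $\delta_0:=\delta/2$ and use $(1+N^2)^{-p\alpha^-/2}\le N^{-p(\alpha-\delta_0)}$.

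\emph{Step 1 (the Galerkin error \eqref{Px-x-LP-T}).} Fix $M$ large, the associated $\epsilon$ small, and take $N\ge N_0(M)$. On $\{\tau^M\wedge\nu_N^{M,\epsilon}=T\}$ the Besov embedding $\C^\beta\hookrightarrow\C^{-\alpha}$ (Lemma \ref{V-SUB-H}(i), valid since $\beta>0>-\alpha$) gives $\sup_{t\in[0,T]}t^{\gamma'}\|P_NX_t-X_t^N\|_{-\alpha}\lesssim\sup_{t\le\tau^M\wedge\nu_N^{M,\epsilon}}t^{\gamma'}\|P_NX_t-X_t^N\|_{\beta}$, whence for any fixed $c>0$
\begin{equation*}
\begin{aligned}
\mathbb P\Big(\sup_{t\in[0,T]}t^{\gamma'}\|P_NX_t-X_t^N\|_{-\alpha}\ge cN^{\delta-\alpha}\Big)
&\le\mathbb P(\tau^M<T)+\mathbb P(\nu_N^{M,\epsilon}<T)\\
&\quad+\mathbb P\Big(\sup_{t\le\tau^M\wedge\nu_N^{M,\epsilon}}t^{\gamma'}\|P_NX_t-X_t^N\|_{\beta}\ge c'N^{\delta-\alpha}\Big)
\end{aligned}
\end{equation*}
for a suitable $c'>0$. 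Markov's inequality with exponent $p$ and Theorem \ref{MAIN-THM1} bound the last term by $\lesssim_M N^{(\alpha-\delta)p}N^{-p(\alpha-\delta_0)}=N^{-p\delta/2}\to0$, and $\mathbb P(\nu_N^{M,\epsilon}<T)\to0$ by \eqref{t=T,1}. Hence $\limsup_{N\to\infty}\mathbb P(\,\cdot\,)\le\mathbb P(\tau^M<T)$, and sending $M\to\infty$ via \eqref{t=T,2} yields \eqref{Px-x-LP-T}. The order of limits is essential: one first sends $N\to\infty$ (legitimate because $N_0(M)$ does not depend on $N$), and only then $M\to\infty$.

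\emph{Step 2 (the projection error and \eqref{Px-x-LP-T2}).} By \eqref{Px-x-LP-T}, the triangle inequality and a union bound, I would reduce to showing that the probabilities of $\{\sup_t t^{\gamma'}\|Y_t-P_NY_t\|_{-\alpha}\ge cN^{\delta-\alpha}\}$ and $\{\sup_t t^{\gamma'}\|\bar Z_t-\bar Z_t^N\|_{-\alpha}\ge cN^{\delta-\alpha}\}$ tend to $0$. For the first, Lemma \ref{est-P-PN-P} with $\lambda=\alpha+\beta$ gives $\|Y_t-P_NY_t\|_{-\alpha}\lesssim(\log N)^2N^{-(\alpha+\beta)}\|Y_t\|_{\beta}$, so by \eqref{est-Y-EHOLD} and Markov the probability is $\lesssim N^{(\alpha-\delta)p}N^{-(\alpha+\beta)p}(\log N)^{2p}=N^{-(\beta+\delta)p}(\log N)^{2p}\to0$ as $\beta>0$. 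For the second, \eqref{ZTN-ZN-n-ini} with $n=1$ and with the free parameter $\kappa$ there taken equal to $\alpha$ gives $\mathbb E\sup_t t^{\alpha p/2}\|\bar Z_t-\bar Z_t^N\|_{-\alpha}^p\lesssim(\log N)^{2p}N^{-\alpha p}+N^{-p(\alpha-\delta_0)}$; since $\gamma'>3\alpha/2>\alpha/2$ the weight $t^{\alpha p/2}$ may be replaced by $t^{\gamma'p}$ up to a factor $T^{(\gamma'-\alpha/2)p}$, and Markov then bounds the probability by $\lesssim(\log N)^{2p}N^{-\delta p}+N^{-p\delta/2}\to0$. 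Adding the two estimates and combining with Step 1 proves \eqref{Px-x-LP-T2}.

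I expect the main obstacle to be already behind us: the real analytic work — extracting the rate $N^{-\alpha^-}$ from the Gronwall/fixed-point estimate for $Y-Y^N$ with the singular nonlinear and Wick-power terms controlled only up to $\tau^M\wedge\nu_N^{M,\epsilon}$ — is done in Theorem \ref{MAIN-THM1}. What remains requires care only in two bookkeeping points: first, choosing $\alpha^-=\alpha-\delta/2$ so that the moment bound $N^{-p\alpha^-}$ still defeats the blow-up factor $N^{(\alpha-\delta)p}$ produced by Markov's inequality at the threshold $N^{\delta-\alpha}$; and second, the double localization limit, where $\mathbb P(\nu_N^{M,\epsilon}<T)$ must be killed by $N\to\infty$ with $M$ frozen before $\mathbb P(\tau^M<T)$ is killed by $M\to\infty$.
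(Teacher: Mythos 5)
Your argument is correct and follows essentially the same route as the paper: the identity $P_NX-X^N=P_NY-Y^N$, localization by $\tau^M\wedge\nu_N^{M,\epsilon}$, Markov's inequality applied to the moment bound of Theorem \ref{MAIN-THM1}, and taking $N\to\infty$ with $M$ frozen before sending $M\to\infty$ via \eqref{t=T,1}--\eqref{t=T,2}. The only (harmless) deviation is in the projection error for \eqref{Px-x-LP-T2}: the paper bounds $\|X_t-P_NX_t\|_{-\alpha}$ directly by $(\log N)^{2}N^{-\lambda}\|X_t\|_{-\alpha+\lambda}$ with $\lambda<\alpha$ together with the moment bound \eqref{X=Y+Z-EST}, whereas you split it as $(Y_t-P_NY_t)+(\bar Z_t-\bar Z_t^N)$ and invoke \eqref{est-Y-EHOLD} and \eqref{ZTN-ZN-n-ini}; both yield a rate beating $N^{\delta-\alpha}$.
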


\begin{proof} By similar discussion as in Remark \ref{remgam}, it is enough to prove \eqref{Px-x-LP-T} and \eqref{Px-x-LP-T2} for every $\gamma'\in(3\alpha/2,1/3)$.  Taking the stopping times in Section \ref{SEC4-1} with $\epsilon>0$  fixed, we see that
\begin{equation}\label{1+2pro}
\begin{aligned}
\mathbb P (\sup_{t\in[0,T]} t^{\gamma'} &\|P_NX_t-X_t^N\|_{-\alpha}\gtrsim N^{\delta-{\alpha}})
\leq  \mathbb P (\sup_{t\in[0,\tau^M\wedge\nu_N^{M,\epsilon}]}t^{\gamma'} \|P_NX_t-X_t^N\|_{-\alpha}\gtrsim N^{\delta-{\alpha}})\\
&+\mathbb P (\sup_{t\in(\tau^M\wedge\nu_N^{M,\epsilon},T]} t^{\gamma'} \|P_NX_t-X_t^N\|_{-\alpha}\gtrsim N^{\delta-{\alpha}}).
\end{aligned}
\end{equation}
 On the one hand, by \eqref{t=T,2} for any $\varepsilon>0$ we have $M>0$  sufficiently large such that $P(\tau^M <T)<\varepsilon$, and  for such $M$ fixed, by \eqref{t=T,1} there exists $N_1=N(M)$ ($N_1>N_0$ with $N_0$ given in \eqref{PY-Y-LP-LEM}) such that $P(\nu_N^{M,\epsilon}<T)<\varepsilon$ for all $N\geq N_1$. Then for all $N\geq N_1$,
\[
\mathbb P (\sup_{t\in(\tau^M\wedge\nu_N^{M,\epsilon},T]} t^{\gamma'} \|P_NX_t-X_t^N\|_{-\alpha}\gtrsim N^{\delta-{\alpha}})
\leq \mathbb P(\tau^M\wedge\nu_N^{M,\epsilon}<T)<\varepsilon.
\]
On the other hand,  by \eqref{PY-Y-LP-LEM} and letting $\beta$ close to  $\alpha$ ($\beta>\alpha$) and by using the embedding $\|\cdot\|_{-\alpha}\lesssim\|\cdot\|_{\beta}$ and  Markov's inequality, for the above $M$, the first term on the right hand of \eqref{1+2pro}  tends to zero when $N$ tends to infinity. Then   \eqref{Px-x-LP-T} follows.

 Moreover, we have
  \begin{equation*}\label{f}
     \|X_t-X_t^N\|_{-\alpha}\lesssim \|X_t-P_NX_t\|_{-\alpha}+\|P_NX_t-X^N_t\|_{-\alpha},
 \end{equation*}
 which by \eqref{est-P-PN} for any $\lambda<\alpha$ we get
 \begin{equation*}
   \|X_t-P_NX_t\|_{-\alpha}
  \lesssim  \frac{(\log N)^{2p}}{N^{\lambda p}} \|X_t\|_{-\alpha+\lambda},
\end{equation*}
and similar to \eqref{X=Y+Z-EST}  we have $\E\sup_{t\in[0,T]}t^{\gamma'p}\|X_t\|_{-\alpha+\lambda}^p<\infty$. \eqref{Px-x-LP-T2} holds by choosing $\lambda$ close to $\alpha$.

\end{proof}
\appendix
\section{Appendix} \label{NOISE}
\begin{definition}
\label{WHITE-NOISE}
Let $\{\xi(\phi)\}_{\phi\in L^2(\mathbb R\times \mathbb T^d)}$ be a family of centered Gaussian random variables on a probability space $(\Omega,\mathcal F,\mathbb P)$ such that
\[
\mathbb E(\xi(\phi)\xi(\psi))=\langle\phi,\psi\rangle_{L^2(\mathbb R\times \mathbb T^d)},
\]
for all $\psi,\phi\in L^2(\mathbb R\times \mathbb T^d)$. Then $\xi$ is called a space-time white noise on $\mathbb R\times \mathbb T^d$. We interpret $\xi(\phi)$ as a stochastic integral and write
\[
\int_{\mathbb R\times \mathbb T^d}\psi(t,x)\xi(dt,dx):=\xi(\psi),~~\psi\in L^2(\mathbb R\times \mathbb T^d).
\]
\end{definition}
For any $n\in\mathbb N$, the multiple stochastic integrals (see \cite[Chapter 1]{Nu2006}) on $\mathbb R\times \mathbb T^d$ are defined for all symmetric functions $f$ in $L^2(\mathbb R\times \mathbb T^d)$, i.e. functions such that
\[
 f(z_1,\ldots,z_n)=f(z_{\sigma(1)},\ldots,z_{\sigma(n)}),~~z_i\in \mathbb R\times \mathbb T^d, j=1,2,\ldots,n,
\]
for any permutation $(\sigma(1),\ldots,\sigma(n))$ of $(1,\ldots,n)$. For such a symmetric function $f$ we denote its $n$-th interated stochastic integral by
\[
I_n(f):=\int_{(\mathbb R\times \mathbb T^d)^n}f(z_1,\ldots,z_n)\xi(\otimes_{i=1}^nds_i,\otimes_{i=1}^ndx_i),~z_i=(t_i,x_i)\in\mathbb R\times \mathbb T^d.
\]
\begin{theorem}{\cite[Theorem 1.1.2, Section 1.4]{Nu2006}}
\label{decom-wick}
Let $f$ be any symmetric function in $L^2((\mathbb R\times \mathbb T^d)^n)$. Then
\begin{equation}\label{iTO-EQ}
\mathbb E(I_n(f))^2=n!\|f\|_{L^2((\mathbb R\times \mathbb T^d)^n)}^2
\end{equation}
and
\begin{equation}\label{iTO-iEQ}
\mathbb E|I_n(f)|^p\leq (p-1)^{\frac{np}{2}}(\mathbb E|I_n(f)|^2)^{\frac{p}{2}}
\end{equation}
for every $p\geq 2$.
\end{theorem}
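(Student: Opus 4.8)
The plan is to prove the two assertions separately: first the Itô--Wiener isometry \eqref{iTO-EQ}, which is purely combinatorial, and then the moment bound \eqref{iTO-iEQ}, which rests on hypercontractivity of the Ornstein--Uhlenbeck semigroup associated with $\{\xi(\phi)\}$. For \eqref{iTO-EQ} I would first establish the identity on the dense class of \emph{elementary} symmetric functions and then pass to the limit. Fix pairwise disjoint sets $A_1,\ldots,A_m\subset\mathbb R\times\mathbb T^d$ of finite measure and consider
\[
 f=\sum_{i_1,\ldots,i_n} a_{i_1\cdots i_n}\,\mathbf{1}_{A_{i_1}\times\cdots\times A_{i_n}},
\]
where the array $(a_{i_1\cdots i_n})$ is symmetric and vanishes whenever two indices coincide (the off-diagonal condition). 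For such $f$ the multiple integral is by definition
\[
 I_n(f)=\sum_{i_1,\ldots,i_n} a_{i_1\cdots i_n}\,\xi(A_{i_1})\cdots\xi(A_{i_n}),
\]
and since the $A_i$ are disjoint, the $\xi(A_i)$ are independent centered Gaussians with $\mathbb E\,\xi(A_i)^2=|A_i|$.

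Expanding $\mathbb E(I_n(f))^2$ and using independence together with $\mathbb E\,\xi(A_i)=0$, a term survives only when the index tuple of the second factor is a permutation of that of the first; by the off-diagonal condition all indices are distinct, so each surviving term contributes $\prod_k|A_{i_k}|$, and the number of matching permutations is exactly $n!$. Using symmetry of $(a_{i_1\cdots i_n})$ this yields
\[
 \mathbb E(I_n(f))^2=n!\sum_{i_1,\ldots,i_n} a_{i_1\cdots i_n}^2\,|A_{i_1}|\cdots|A_{i_n}|=n!\,\|f\|_{L^2((\mathbb R\times\mathbb T^d)^n)}^2.
\]
Since the elementary symmetric functions are dense in the symmetric subspace of $L^2((\mathbb R\times\mathbb T^d)^n)$, the isometry just proved shows that $I_n$ extends continuously, with operator constant $\sqrt{n!}$, to all symmetric $f$, and \eqref{iTO-EQ} follows for arbitrary $f$.

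For \eqref{iTO-iEQ} I would invoke the hypercontractivity of the Ornstein--Uhlenbeck semigroup $(P_t)_{t\geq0}$. The decisive structural fact is that $I_n(f)$ lies in the $n$-th Wiener chaos, on which $P_t$ acts as multiplication by $e^{-nt}$, that is $P_tI_n(f)=e^{-nt}I_n(f)$; this follows from the Wick/Hermite representation of multiple integrals underlying the first part. Nelson's theorem states that for $p\geq 2$ one has $\|P_tX\|_{L^p}\leq\|X\|_{L^2}$ as soon as $e^{2t}\geq p-1$. Applying this to $X=I_n(f)$ and choosing $t$ with $e^{2t}=p-1$, so that $e^{nt}=(p-1)^{n/2}$, gives
\[
 (p-1)^{-n/2}\|I_n(f)\|_{L^p}=e^{-nt}\|I_n(f)\|_{L^p}=\|P_tI_n(f)\|_{L^p}\leq\|I_n(f)\|_{L^2},
\]
and raising both sides to the $p$-th power produces exactly $\mathbb E|I_n(f)|^p\leq(p-1)^{np/2}(\mathbb E|I_n(f)|^2)^{p/2}$.

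The routine part is the isometry: the combinatorics on elementary functions and the density argument are standard. The main obstacle is the hypercontractive estimate, since Nelson's theorem is itself a nontrivial input. If one prefers not to quote it, the plan would be to derive it from the sharp two-point inequality on $\{-1,1\}$ together with tensorization (equivalently, from Gross's logarithmic Sobolev inequality for the Gaussian measure), and then transfer it to the continuous Gaussian setting by a central-limit approximation. The bridge that makes everything fit is the identification of $I_n(f)$ as an eigenvector of $P_t$ with eigenvalue $e^{-nt}$, so that this eigenvalue combines with Nelson's bound to yield precisely the explicit constant $(p-1)^{np/2}$.
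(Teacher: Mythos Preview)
Your proof is correct and follows the standard route: the isometry via elementary functions and density, then the $L^p$ bound via hypercontractivity of the Ornstein--Uhlenbeck semigroup and the eigenvalue action $P_t I_n(f)=e^{-nt}I_n(f)$ on the $n$-th chaos. Note, however, that the paper does not give its own proof of this statement at all: it is recorded in the appendix as a citation to Nualart's book (Theorem 1.1.2 and Section 1.4), so there is no argument in the paper to compare against. Your sketch is essentially the argument one finds in that reference.
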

\bigskip
For  symmetric kernels $K_1,K_2:\mathbb Z^2\rightarrow(0,\infty)$, we denote its convolution
\[
  K_1\star K_2(m):=\sum_{l\in\mathbb Z^2} K_1(m-l)K_2(l)
\]
and for $N\in\mathbb N$ we set
\[
  K_1\star_{\leq N} K_2(m):=\sum_{|l|\leq N} K_1(m-l)K_2(l),~~
  K_1\star_{> N} K_2(m):=K_1\star K_2-K_1\star{_{\leq N}} K_2.
\]
For convolutions of the same kernel, we introduce
\begin{equation}\label{n}
   K\star^{1} K:=K,~ K\star^{n} K:= K\star({K\star^{n-1} K}),
\end{equation}
\begin{equation}\label{<Nn}
 K{\star^1}_{\leq N} K:= K, ~K{\star^n}_{\leq N} K:=K\star_{\leq N}({K{\star^{n-1}}_{\leq N} K}),
\end{equation}
where by simple calculation we actually obtain
 \begin{align*}
  K{\star^n}_{\leq N} K(m) =&\sum_{|l_i|\leq N,i=1,\ldots,n-1}  K(m-l_{n-1}) \prod^{n-1}_{i=1} K(l_i-l_{i-1}),
\end{align*}
with  the convention that $l_0 = 0$. Similarly for every $n\geq 2$, we denote
\begin{equation}\label{>Nn}
  K{\star^n}_{> N} K(m):=K\star^n K-K{\star^n}_{\leq N} K.
\end{equation}
Following the technique of \cite[Lemma 10.14]{Hai14}, we have the following estimates.
\begin{lemma}\label{KENEL-ES}
Let $K^\gamma:\mathbb Z^2\rightarrow(0,\infty)$ be a symmetric kernel such that $K^\gamma(m)\lesssim \frac{1}{(1 + |m|^2)^{1-\gamma}}$, $\gamma\in[0,\frac{1}{n})$ for $n\in \mathbb N$.
 {\rm(i)} If $\gamma>0$ then
\begin{equation*}\label{Kn}
\max\big\{K^\gamma \star^{n} K^\gamma (m),~~\sup_{N\geq 1}K^\gamma{\star{^{n}}}_{\leq N} K^\gamma (m)\big\}\lesssim \frac{1}{(1 + |m|^2)^{1-n\gamma}},
\end{equation*}
\begin{equation*}\label{K>Nn}
K^\gamma{\star{^{n}}}_{> N} K^\gamma (m) \lesssim  \left\{
\begin{aligned}
  &\frac{1}{(1 + |m|^2)^{1-n\gamma}},~~\text{if}~|m|\geq N,
\\&\frac{1}{(1 + |N|^2)^{1-n\gamma}},~~\text{if}~|m|< N.
\end{aligned}
\right.\end{equation*}
 {\rm(ii)} If  $\gamma=0$ then
\begin{equation*}\label{Kn}
\max\big\{ K^0\star^{n} K^0 (m),~~\sup_{N\geq 1}K^0{\star{^{n}}}_{\leq N} K^0 (m) \big\}
\lesssim   \frac{1}{(1 + |m|^2)^{1-\epsilon}},
\end{equation*}
\begin{equation*}\label{K>Nn}
K^0{\star{^{n}}}_{> N} K^0 (m) \lesssim \left\{
\begin{aligned}
  &\frac{1}{(1 + |m|^2)^{1-\epsilon}},~~\text{if}~|m|\geq N
\\&\frac{1}{(1 + |N|^2)^{1-\epsilon}},~~\text{if}~|m|< N,
\end{aligned}
\right.\end{equation*}
for every $\epsilon\in(0,1)$.
\end{lemma}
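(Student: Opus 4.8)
The plan is to reduce both parts of Lemma~\ref{KENEL-ES} to a single elementary convolution estimate on $\Z$ and then run two nested inductions on $n$, exactly in the spirit of \cite[Lemma 10.14]{Hai14} but adapted to the Fourier lattice (so there is only large-scale decay, no short-scale singularity). The building block I would first establish is: for exponents $s_1,s_2\in(0,1)$ with $s_1+s_2>1$,
\[
\sum_{l\in\Z}\frac{1}{(1+|m-l|^2)^{s_1}}\,\frac{1}{(1+|l|^2)^{s_2}}\lesssim \frac{1}{(1+|m|^2)^{s_1+s_2-1}}.
\]
I would prove this by splitting $\Z$ into $\{|l|\le|m|/2\}$, $\{|m-l|\le|m|/2\}$ and the complementary region where $|l|\gtrsim|m|$ and $|m-l|\gtrsim|m|$ simultaneously. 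On the first two regions one factor is $\backsimeq(1+|m|^2)^{-s_i}$ and the remaining sum over a ball of radius $\sim|m|$ contributes $(1+|m|^2)^{1-s_j}$ (here $s_j<1$ is used), giving the desired power. On the last region I would further split into $|l|\lesssim|m|$ (at most $\sim|m|^2$ lattice points, each term $\lesssim(1+|m|^2)^{-s_1-s_2}$) and $|l|\gtrsim|m|$ (a convergent tail precisely because $s_1+s_2>1$).

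Granting this, I would prove the full convolution bound $K^\gamma\star^n K^\gamma(m)\lesssim(1+|m|^2)^{-(1-n\gamma)}$ by induction on $n$: the case $n=1$ is the hypothesis on $K^\gamma$, and writing $K^\gamma\star^n K^\gamma=K^\gamma\star(K^\gamma\star^{n-1}K^\gamma)$ I apply the building block with $s_1=1-\gamma$ and $s_2=1-(n-1)\gamma$. The conditions $s_1,s_2\in(0,1)$ and $s_1+s_2=2-n\gamma>1$ hold \emph{precisely} because $0<\gamma<1/n$, and the output exponent is $s_1+s_2-1=1-n\gamma$. The bound for $\sup_{N}K^\gamma\star^n_{\le N}K^\gamma$ is then immediate from positivity: since $K^\gamma>0$, truncating the inner indices to $|l_i|\le N$ only discards nonnegative terms in the explicit sum \eqref{<Nn}, so $K^\gamma\star^n_{\le N}K^\gamma(m)\le K^\gamma\star^n K^\gamma(m)$ for every $N$.

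The tail $K^\gamma\star^n_{>N}K^\gamma$ is the heart of the matter. For $|m|\ge N$ positivity again gives $K^\gamma\star^n_{>N}K^\gamma(m)\le K^\gamma\star^n K^\gamma(m)\lesssim(1+|m|^2)^{-(1-n\gamma)}$, the claimed bound. For $|m|<N$ I would induct on $n$ using the identity
\[
K^\gamma\star^n_{>N}K^\gamma(m)=\sum_{|l|>N}K^\gamma(m-l)\,[K^\gamma\star^{n-1}K^\gamma](l)+\sum_{|l|\le N}K^\gamma(m-l)\,[K^\gamma\star^{n-1}_{>N}K^\gamma](l),
\]
obtained by adding and subtracting $\sum_{|l|\le N}K^\gamma(m-l)[K^\gamma\star^{n-1}K^\gamma](l)$ in the difference $K^\gamma\star^n K^\gamma-K^\gamma\star^n_{\le N}K^\gamma$. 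The second sum is handled by the inductive tail bound, which for $|l|\le N$ gives $[K^\gamma\star^{n-1}_{>N}K^\gamma](l)\lesssim(1+N^2)^{-(1-(n-1)\gamma)}$, together with $\sum_{|l|\le N}K^\gamma(m-l)\lesssim(1+N^2)^{\gamma}$, producing exactly $(1+N^2)^{-(1-n\gamma)}$. For the first sum I would split $\{|l|>N\}$ into region A, where $|m-l|\ge|l|/2$ so that $K^\gamma(m-l)\lesssim(1+|l|^2)^{-(1-\gamma)}$ and the two factors combine into the convergent tail $\sum_{|l|>N}(1+|l|^2)^{-(2-n\gamma)}\lesssim(1+N^2)^{-(1-n\gamma)}$ (convergent as $2-n\gamma>1$), and region B, where $|m-l|<|l|/2$; this last inequality forces $|m|>|l|/2>N/2$, so whenever region B is nonempty one has $|m|\backsimeq N$, and its contribution is bounded by the full convolution $K^\gamma\star^n K^\gamma(m)\lesssim(1+|m|^2)^{-(1-n\gamma)}\backsimeq(1+N^2)^{-(1-n\gamma)}$. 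The base case $n=1$ is trivial since $K^\gamma\star^1_{>N}K^\gamma=0$.

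Finally, for part (ii) the kernel sits exactly at the borderline exponent $1$, so I would reduce to part (i) by the pointwise domination $K^0(m)\lesssim(1+|m|^2)^{-(1-\epsilon/n)}$, valid for any $\epsilon>0$. Since all kernels are positive, each term in the full, truncated, and tail sums is dominated term-by-term, whence $K^0\star^n K^0\lesssim K^{\epsilon/n}\star^n K^{\epsilon/n}$ and likewise for $\star^n_{\le N}$ and $\star^n_{>N}$; applying part (i) with $\gamma=\epsilon/n<1/n$ yields the stated bounds with $1-n\gamma$ replaced by $1-\epsilon$. I expect the main obstacle to be exactly the $|m|<N$ tail estimate: a naive extraction of a single factor $(1+N^2)$ from one large increment lands on the non-integrable borderline (the two exponents one would then convolve sum to $1$), and it is the region~A/region~B decomposition—routing the genuinely large increments into a convergent lattice tail and the remaining configurations into the already-proved full bound via $|m|\backsimeq N$—that makes the argument close. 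The strict inequality $\gamma<1/n$ is precisely what keeps the region~A sum convergent, and is therefore the sharp threshold of the whole lemma.
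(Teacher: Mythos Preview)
Your argument is correct. The underlying building block---a two-kernel convolution estimate via region decomposition---is the same as the paper's, but your packaging differs in three places and is in each case at least as clean. First, for the truncated convolution $K^\gamma\star^{n}_{\le N}K^\gamma$ you simply invoke positivity to dominate by the full convolution; the paper instead re-proves the building block with the extra restriction $|l|\le N$ (its four regions $A_1$--$A_4$ are essentially your three, with the outer region split in two). Second, for the tail $K^\gamma\star^{n}_{>N}K^\gamma$ the paper only cites \cite[Corollary C.3]{TW20182}, whereas you give a self-contained inductive proof via the telescoping identity and the region~A/B split; your observation that region~B forces $|m|\backsimeq N$ and hence reduces to the already-established full bound is exactly the right way to close the $|m|<N$ case. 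Third, for $\gamma=0$ the paper runs the building block at the endpoint $\alpha=\beta=1$, picks up a $\log|m|$, and absorbs it into $(1+|m|^2)^{\epsilon}$; your reduction---noting that $K^0$ also satisfies the hypothesis with parameter $\gamma=\epsilon/n$ and applying part~(i)---sidesteps the logarithm entirely and is the slicker route. Both approaches yield the same constants and ranges, so nothing is lost either way.
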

\begin{proof}
The estimates for $K^\gamma\star^{n} K^\gamma$ and $K^\gamma{\star{^{n}}}_{> N} K^\gamma$ with $\gamma\in[0,\frac 1 n)$ were obtained in \cite[Corollary C.3]{TW20182}. The term  $K^\gamma{\star{^{n}}}_{\leq N} K^\gamma$ can be similarly considered following the same procedure. Now let two  symmetric kernels $K_1,K_2:\mathbb Z^2\rightarrow(0,\infty)$ be such that $K_1(m)\lesssim\frac{1}{(1 + |m|^2)^\alpha}$ and $K_2(m)\lesssim\frac{1}{(1 + |m|^2)^\beta}$ with any $\alpha,\beta\in(0,1]$ and $\alpha+\beta-1>0$. We consider the following regions of $\mathbb Z^2$,
\begin{equation*}
\begin{aligned}
 &A_1=\{l:~|l|\leq \frac{|m|}{2}\},~~A_2=\{l:~|l-m|\leq \frac{|m|}{2}\},\\
 &A_3=\{l:~\frac{|m|}{2}\leq |l|\leq 2|m|,|l-m|\geq\frac{|m|}{2}\},~~A_4=\{l:~|l|\geq 2|m|\}.
\end{aligned}
\end{equation*}
Since for every $l\in A_1$ we have $|m-l|\geq |m|-|l|\geq \frac{|m|}{2}$, then for uniform $N\geq 1$
\begin{equation*}
 \begin{aligned}
 \sum_{l\in A_1,|l|\leq N} K_1(m-l)K_2(l)&\lesssim \frac 1{(1+|m|^2)^\alpha}
       \sum_{l\in A_1} \frac 1{(1+|l|^2)^\beta}\\
       &\lesssim\left\{
       \begin{aligned}
       & \frac{(1+|m|^2)^{1-\beta}}{(1+|m|^2)^\alpha}, &\text{if $\beta<1$,}\\
       &\frac{\log |m|\vee 1}{(1+|m|^2)^\alpha}, &\text{if $\beta=1$.}
\end{aligned}
\right.
\end{aligned}
\end{equation*}
For $l\in A_2$, by symmetry we get that for uniform $N\geq 1$
\begin{equation*}
 \sum_{l\in A_2,|l|\leq N} K_1(m-l)K_2(l)
       \lesssim\left\{
       \begin{aligned}
       & \frac{(1+|m|^2)^{1-\alpha}}{(1+|m|^2)^\beta}, &\text{if $\alpha<1$,}\\
       &\frac{\log |m|\vee 1}{(1+|m|^2)^\alpha}, &\text{if $\beta=1$.}
\end{aligned}
\right.
\end{equation*}
For $l\in A_3$ we notice that by the definition of $A_3$, for uniform $N\geq 1$ we have
\begin{equation*}
 \sum_{l\in A_3,|l|\leq N} K_1(m-l)K_2(l)
       \lesssim
       \sum_{|l|\leq 2|m|}   \frac 1{(1+|m|^2)^{\alpha+\beta}}
       \lesssim
         \frac 1{(1+|m|^2)^{\alpha+\beta-1}}.
\end{equation*}
For $l\in A_4$, we have $|m|<\frac {|l|} 2$ and then $|m-l|\geq |l|-|m|\geq \frac {|l|}{2}$, which implies that for uniform $N\geq 1$
\begin{equation*}
 \sum_{l\in A_4,|l|\leq N} K_1(m-l)K_2(l)
       \lesssim
       \sum_{|l|> 2|m|}   \frac 1{(1+|l|^2)^{\alpha+\beta}}
       \lesssim
         \frac 1{(1+|m|^2)^{\alpha+\beta-1}},
\end{equation*}
where the second inequality comes from $\alpha+\beta-1>0$.

Combining all the above and considering that $\log|m|\lesssim(1+|m|^2)^\epsilon$ for $\epsilon>0$ arbitrarily small, we thus obtain that for uniform $N\geq 1$
\begin{equation*}
 K_1 \star _{\leq N}K_2(m)
       \lesssim\left\{
       \begin{aligned}
       & \frac{1}{(1+|m|^2)^{1-\epsilon}}, &\text{if $\alpha,\beta=1$,}\\
       &\frac{1}{(1+|m|^2)^{\alpha+\beta-1}}, &\text{if $\beta<1$ or $\alpha<1$,}
\end{aligned}
\right.
\end{equation*}
for $\epsilon>0$ arbitrarily small. We prove Lemma \ref{KENEL-ES} immediately.

\end{proof}

\end{document}